\newtheorem{theorem}{Theorem}[section]
\newtheorem{lemma}[theorem]{Lemma}
\newtheorem{proposition}[theorem]{Proposition}
\newtheorem{definition}[theorem]{Definition}
\newtheorem{corollary}[theorem]{Corollary}
\newtheorem{remark}[theorem]{Remark}
\newenvironment{prf} {{\bf Proof.}}{\hfill $\Box$}
\begin{document}
\title[Simplified vanishing moment criteria]{Simplified vanishing moment criteria for wavelets over general dilation groups, with applications to abelian and shearlet dilation groups}
\author{Hartmut F\"uhr}
\email{fuehr@matha.rwth-aachen.de}
\address{Lehrstuhl A f\"ur Mathematik, RWTH Aachen University, D-52056 Aachen}
\author{Reihaneh Raisi Tousi} 
\email[corresponding author]{raisi@um.ac.ir}
\address{Department of Pure‎ ‎Mathematics‎,
 ‎Ferdowsi University of Mashhad‎, ‎P.O.Box‎
‎1159-91775‎, ‎Mashhad‎, ‎Iran}
\begin{abstract}
We consider the coorbit theory associated associated to a square-integrable, irreducible quasi-regular representation of a semidirect product group $G = \mathbb{R}^d \rtimes H$. The existence of coorbit spaces for this very general setting has been recently established, together with concrete vanishing moment criteria for analyzing vectors and atoms that can be used in the coorbit scheme. These criteria depend on fairly technical assumptions on the dual action of the dilation group, and it is one of the chief purposes of this paper to considerably simplify these assumptions. 

We then proceed to verify the assumptions for large classes of dilation groups, in particular for all abelian dilation groups in arbitrary dimensions, as well as a class called {\em generalized shearlet dilation groups}, containing and extending all known examples of shearlet dilation groups employed in dimensions two and higher. We explain how these groups can be systematically constructed from certain commutative associative algebras of the same dimension, and give a full list, up to conjugacy, of shearing groups in dimensions three and four. In the latter case, three previously unknown groups are found.  

As a result, the existence of Banach frames consisting of compactly supported wavelets, with simultaneous convergence in a whole range of coorbit spaces, is established for all groups involved. 
\end{abstract}

 \maketitle

\noindent {\small {\bf Keywords:} square-integrable group representation; continuous wavelet transform; coorbit spaces; Banach frames; vanishing moments; shearlets; anisotropic wavelet systems}

\noindent{\small {\bf AMS Subject Classification:} 42C15; 42C40; 46E35}

\section{Introduction: Wavelet coorbit theory in higher dimensions}

Coorbit theory can be understood as a group-theoretic formalism for the description of approx\-ima\-tion-theoretic properties of building blocks arising from a unitary group action. It was initially developed with the aim to provide a unified view of large classes of function spaces, including the family of Besov spaces on one hand, with the underlying group given by the $ax+b$-group, and the modulation spaces, associated to a unitary action of the Heisenberg group as underlying group, on the other. Later it was seen to apply to other settings and groups as well, for example the shearlet groups in dimensions two and higher  \cite{DaKuStTe,DaStTe12,DaHaTe12}. This paper continues work begun in \cite{Fu_coorbit,Fu_atom}, which provided explicit criteria for wavelets associated to general dilation groups, ensuring their suitability as analyzing vectors and/or atoms in the coorbit scheme. The results in these sources depend on a number of fairly technical conditions on the dilation groups, defined as {\em (
strong) temperate embeddedness} of the 
associated open dual orbit. 
It is the chief purpose of this paper to provide simpler criteria for these conditions to hold, and to verify these 
for large classes of dilation groups. In particular, our results cover all shearlet dilation groups that have been considered so far, thus extending the known results for these groups in a unified manner. 

\subsection{Notation and preliminaries}

Before we describe the aims of this paper in more detail, let us quickly fix some notation that will be used throughout. 
We let $\mathbb{R}^+$ denote the set of strictly positive real numbers. 
$|\cdot|: \mathbb{R}^d \to
\mathbb{R}$ denotes the euclidean norm.  Given a matrix $h \in \mathbb{R}^{d \times d}$, the operator norm
of the induced linear map $(\mathbb{R}^d, |\cdot|) \to  (\mathbb{R}^d, |\cdot|)$ is denoted by
 $\| h \|$. By a slight abuse of notation we use $|\alpha| = \sum_{i=1}^d \alpha_i$ for multiindices $\alpha \in \mathbb{N}_0^d$. 
 
 Given $f \in {\rm L}^1(\mathbb{R}^d)$, its Fourier transform is defined as
\[
 \mathcal{F}(f)(\xi) := \widehat{f}(\xi) := \int_{\mathbb{R}^d} f(x) e^{-2\pi i \langle x,\xi \rangle} dx~,
\] with $\langle \cdot, \cdot \rangle$ denoting the euclidean scalar product on $\mathbb{R}^d$. 
We let $\mathcal{S}(\mathbb{R}^d)$ denote Schwartz space. Related to this space is
the family of Schwartz norms, defined for $r,m>0$ by 
\[
| f |_{r,m} = \sup_{x \in \mathbb{R}^d, |\alpha| \le r} (1+|x|)^{m}
|\partial^\alpha f (x)|~.
\] for any function $f: \mathbb{R}^d \to \mathbb{C}$ with suitably many partial derivatives. 

$\mathcal{S}'(\mathbb{R}^d)$ denotes the dual of $\mathcal{S}(\mathbb{R}^d)$, the space of tempered distributions. We denote the extension of the Fourier transform to $\mathcal{S}'(\mathbb{R}^d)$ by the same symbols as for ${\rm L}^1$-functions.  For any subspace $X \subset
\mathcal{S}'(\mathbb{R}^d)$, we let $\mathcal{F}^{-1} X$ denote its
inverse image under the Fourier transform.

In order to avoid cluttered notation, we will occasionally use
the symbol $ X \preceq Y$ between expressions $X,Y$ involving one or
more functions on or vectors in $\mathbb{R}^d$,  to indicate the existence of a constant $C>0$,
independent of the functions and vectors occurring in $X$ and $Y$, such that
 $X \le CY$. 

 Our conventions regarding locally compact groups, Haar measure etc. are the same as in Folland's book \cite{Folland_AHA}. 
We fix a closed matrix group $H < {\rm
GL}(d,\mathbb{R})$, the so-called {\bf dilation group}, and let $G =
\mathbb{R}^d \rtimes H$. This is the group of affine
mappings generated by $H$ and all translations. Elements of $G$ are
denoted by pairs $(x,h) \in \mathbb{R}^d \times H$, and the product
of two group elements is given by $(x,h)(y,g) = (x+hy,hg)$. 
The left Haar measure of $G$ is given by $d(x,h) = |\det(h)|^{-1} dx dh$, and the modular
function of $G$ is given by $\Delta_G(x,h) = \Delta_H(h)
|\det(h)|^{-1}$.
 
$G$ acts unitarily on ${\rm L}^2(\mathbb{R}^d)$ by the {\bf quasi-regular
representation} defined by
\begin{equation} \label{eqn:def_quasireg}
[\pi(x,h) f](y) = |{\rm det}(h)|^{-1/2} f\left(h^{-1}(y-x)\right)~.
\end{equation}
 
\subsection{Continuous wavelet transforms in higher dimensions}
Let us now shortly describe the ingredients going into the construction of higher-dimensional continuous wavelet transforms. For more background on the representation-theoretic aspects we refer to the book \cite{Fu_LN}, and to the previous papers \cite{Fu_coorbit,Fu_atom} for additional information concerning coorbit theory for this setting. 

We assume that the dilation group $H$ is chosen such that $\pi$ is an {\bf (irreducible) square-integrable
representation}, and call such dilation groups {\rm irreducibly admissible}. Recall that if $\pi$ is already known to be irreducible, then square-integrability means
that there exists at least one nonzero {\bf admissible vector} $\psi
\in {\rm L}^2(\mathbb{R}^d)$ such that the matrix coefficient
\[
(x,h) \mapsto \langle \psi, \pi(x,h) \psi \rangle
\] is in ${\rm L}^2(G)$, the ${\rm L}^2$-space associated
to a left Haar measure on $G$. It then follows that there is a dense subspace of admissible vectors, and for each such $\psi$, the 
associated wavelet transform
\[
\mathcal{W}_\psi : {\rm L}^2(\mathbb{R}^d) \ni f \mapsto \left(
(x,h) \mapsto \langle f, \pi(x,h) \psi \rangle \right) \in {\rm L}^2(G)
\] is a scalar multiple of an isometry, giving rise
to the weak-sense {\bf wavelet inversion formula}
\begin{equation} \label{eqn:wvlt_inv}
f = \frac{1}{c_\psi} \int_G \mathcal{W}_\psi f(x,h) \pi(x,h) \psi ~
d\mu_G(x,h)~.
\end{equation}

A thorough understanding of the properties of the wavelet transform hinges on the {\bf dual action}, i.e., 
 the right linear action $\mathbb{R}^d \times H \ni(\xi,h) \mapsto
 h^T \xi$. As a first important instance of this principle one can name the property of square-integrability itself: By the results of \cite{Fu96,Fu10}, $H$ is irreducibly admissible iff the dual action has a single open orbit
 $\mathcal{O} = \{ h^T \xi_0 : h \in H \} \subset \mathbb{R}^d$ of full measure (for some $\xi_0 \in \mathcal{O}$), such
 that in addition the stabilizer group $H_{\xi_0} = \{ h \in H : h^T \xi_0 = \xi_0 \}$ is
 compact. Note that this condition does not depend on a particular choice of $\xi_0 \in
 \mathcal{O}$, since all stabilizers of elements in $\mathcal{O}$ are conjugate. 
 The dual orbit will also be of central importance to this paper.

\subsection{A sketch of coorbit theory} 

 Let us next describe the pertinent notions from coorbit theory. In colloquial terms, coorbit theory can be described as a "theory of nice wavelets and nice signals". Nice signals are those that are well approximated by linear combinations of just a few building blocks, i.e. exhibit a fast decay of coefficients. One possible way of quantifying this decay behaviour is to impose a weighted mixed ${\rm L}^p$-norm on the wavelet coefficients: Nice signals are those for which the wavelet coefficient decay is sufficiently fast to guarantee weighted integrability. One of the starting points of coorbit theory was the realization that the scale of Besov spaces can be understood in precisely these terms \cite{FeiGr1}.. 

Note however that the notion of nice signals will in general depend on the mother wavelet, which casts some doubts on the suitability of such definitions. For these reasons, a theory of nice signals needs to go hand in hand with a theory of nice wavelets, in such a way that the definition of nice signals does not depend on the choice of wavelet, as long as the latter is within the right class. For the wavelet ONB characterization of Besov spaces in dimension one \cite{DVJaPo}, there is at least one well-understood notion of nice wavelets: A wavelet ONB consisting of wavelets with suitably many vanishing moments, decay order and smoothness can be used for the characterization of Besov spaces via weighted mixed summability of the coefficients. In particular all wavelet ONB's having the prescribed properties agree on the set of nice signals. 

It is one of the main assets of coorbit theory to provide such a consistent notion of nice wavelets and signals for rather general continuous wavelet transforms: Whenever one fixes a suitable weight function and summability exponents, there exists a nonempty set $\mathcal{A}_{v_0}$ of nice wavelets such that the definition of a nice signal in terms of weighted summability of the coefficients is {\em independent} of the choice of wavelets within $\mathcal{A}_{v_0}$. Here the subscript $v_0$ serves as a reminder that the actual definition of this set will have to reflect the choice of coefficient space, just as the above-mentioned conditions for nice wavelet ONB's in the characterization of Besov spaces depend on the particular Besov space under consideration.

A second major strength of coorbit theory is that in addition to this consistency, it also allows to replace integrals by sums, and consequently to replace continuous wavelet inversion formulae by frame-type expansions converging in the proper sense, as soon as the wavelet is chosen from the (smaller) class $\mathcal{B}_{v_0}$. 

Before we give a more detailed description of the spaces $\mathcal{A}_{v_0},\mathcal{B}_{v_0}$, we need to introduce further notation. 
 A {\bf weight} on a locally compact group $K$ is a continuous function $w: K \to \mathbb{R}^+$ satisfying $w(xy) \le w(x) w(y)$, for all $x,y \in K$. 
The Besov-type coorbit spaces that we focus on in this paper are obtained by fixing a weight $v$ of the type 
\begin{equation} \label{eqn:defn_v} v(x,h) =  (1+|x| + \|h\|)^s w(h) \end{equation} on $G$, $s \ge0$, and $w$ is some weight on $H$. Note that this indeed defines a weight $v$ on the semidirect product. A weight $w$ on $H$ is called {\bf polynomially bounded} if the inequality
\[
 w(h) \preceq (1+\|h \|)^k (1+\| h^{-1}\|)^k 
\] holds, for suitably large $k$. The weights associated to the homogeneous Besov spaces, but also the examples employed in the shearlet literature (e.g. in \cite{shearlet_book}) are all polynomially bounded. 

Coorbit spaces are defined with reference to suitable solid Banach function spaces on the group $G$, see \cite{FeiGr1} for the precise requirements. In this paper, we shall concentrate on the following class of spaces: 
\begin{definition} Let $v:G \to \mathbb{R}^+$ be weight, $s\ge 0$ and $1 \le p,q < \infty$.  
We define $Y =  {\rm L}^{p,q}_v (G)$, for $1 \le p,q < \infty$ as 
\[
{\rm L}^{p,q}_v (G) = \left\{ F: G \to \mathbb{C}~:~\int_H \left(
\int_{\mathbb{R}^d} |F(x,h)|^p v(x,h)^p dx \right)^{q/p} \frac{dh}{|{\rm det}(h)|} < \infty
\right\}~,
\]
with the obvious norm, and the usual conventions regarding
identification of a.e. equal functions. We let ${\rm L}^{p}_{v} (G) = {\rm L}^{p,p}_{v}(G)$. The corresponding spaces for $p=\infty$ and/or
$q = \infty$ are defined by replacing integrals with essential
 suprema. 
 \end{definition}

 The theory of coorbit spaces largely depends on norm estimates for convolution operators, in particular applied to the reproducing kernels associated to a continuous wavelet transform. For these estimates, the next notion is indispensable: 
 \begin{definition}
  Let $Y = {\rm L}^{p,q}_v(G)$, for $1 \le p,q \le \infty$ and a weight $v$ on $G$.
   A weight $v_0$ is called {\bf control weight} for $Y$ if it satisfies 
\[
 v_0(x,h) = \Delta_G(x,h)^{-1} v_0((x,h)^{-1})~, 
\]
as well as 
 \[
\max \left( \|L_{(x,h)^{\pm 1}} \|_{Y \to Y},\| R_{(x,h)} \|_{Y \to Y},\|
R_{(x,h)^{-1}} \|_{Y \to Y} \Delta_G(x,h)^{-1} \right) \le v_0(x,h)
\] where $L_{(x,h)}$ and $R_{(x,h)}$ denote left and right translation by $(x,h) \in G$.
 \end{definition}


Given a weight $v$ as in (\ref{eqn:defn_v}), it is shown in \cite[Lemma 2.3]{Fu_coorbit} that there exists a control weight satisfying the estimate
 \begin{equation} \label{eqn:cont_weight_sep}
  v_0(x,h) \le (1+|x|)^s w_0(h)~,
 \end{equation} with $w_0 : H \to \mathbb{R}^+$ defined by 
 \begin{eqnarray*} \nonumber
  w_0(h) & = &    (w(h)+w(h^{-1})) \max \left(\Delta_G(0,h)^{-1/q}, \Delta_G(0,h)^{1/q-1} \right)  \\  & & \times \left(|{\rm det}(h)|^{1/q-1/p} + |{\rm det}(h)|^{1/p-1/q} \right) (1+\|h\|+\|h^{-1}\|)^s~.
 \end{eqnarray*} 
In particular, Lemma \ref{lem:dec_est_redundant} below will entail that $w_0$ is polynomially bounded, whenever $w$ is. 

Using $v_0$, we define
\[ \mathcal{A}_{v_0} = \{ \psi \in {\rm L}^2(\mathbb{R}^d)~:~\mathcal{W}_{\psi}\psi \in {\rm L}^1_{v_0}(G) \} \] of analyzing vectors. In order to describe the more restrictive space $\mathcal{B}_{v_0}$, we first need a further definition.
 \begin{definition}
  Let $Y$ denote a solid Banach function space on the locally compact group $G$, $U \subset G$ a compact neighborhood of the identity, and $F: G \to \mathbb{C}$. We let
  \[
   \left(\mathcal{M}_U^R F \right) (x) = \sup_{y \in U}|F(yx^{-1})|
  \] denote the right local maximum function of $F$ with respect to $U$. Given a weight $v_0$ on $G$, we denote the associated Wiener amalgam space by
  \[
   W^R(C^0,Y) = \{ F : G \to \mathbb{C}~:~ F \mbox{ continuous }, \mathcal{M}_U^R F \in Y \}~,
\]
with norm $\| F \|_{ W^R(C^0,Y)} = \|  \mathcal{M}_U^R F \|_{Y}$.

 We let
\[
 \mathcal{B}_{v_0} = \{ \psi \in {\rm L}^2(\mathbb{R}^d)~:~ \mathcal{W}_\psi \psi \in W^R(C^0,{\rm L}^1_{v_0})  \}~. 
\]
 \end{definition}
While the original sources \cite{FeiGr0,FeiGr1,FeiGr2} refer to the right Wiener amalgam space $W^R(C^0,{\rm L}^1_{v_0})$, the symmetry properties of the function $|\mathcal{W}_{\psi} \psi|$ allows to alternatively work with the left-sided  version of the amalgam norm
$\| F \|_{W(C^0,L^1_{v_0})}$, defined as the weighted $L^1_{v_0}$-norm  of the map 
 \[
  \mathcal{M}_U(F) (x) = \sup_{y \in U} |F(xy)|~.
 \] 
We will typically use this norm in the following.  Note that, since $v_0$ is bounded from below, we have $\mathcal{B}_{v_0} \subset \mathcal{A}_{v_0} \subset {\rm Co}({\rm L}^1(G)) \subset {\rm L}^2(\mathbb{R}^d)$.

Now the central results of \cite{FeiGr0,FeiGr1,FeiGr2,Gr} with respect to consistency and discretization can be summarized as follows:
\begin{enumerate}
\item Given a suitable Banach function space $Y$ on $G$ with control weight $v$, fix a nonzero $\psi \in \mathcal{A}_v$, and define 
$\mathcal{H}_{1,v} = \{ f \in {\rm L}^2(\mathbb{R}^d): \mathcal{W}_\psi f \in  {\rm L}^1_{v_0} \}$. 
Let $\mathcal{H}_{1,v}^{\sim}$ denote the space of conjugate-linear bounded functionals on $\mathcal{H}_{1,v}$. Then the wavelet transform $\mathcal{W}_\psi f$ of $f \in \mathcal{H}_{1,v}^{\sim}$ can be defined by canonical extension of the formula for ${\rm L}^2$-functions.  
\item Fix a nonzero $\psi \in \mathcal{A}_{v_0}$, and define 
\[
 {\rm Co} Y = \{ f \in \mathcal{H}_{1,v}^{\sim}~:~ \mathcal{W}_\psi f \in Y \}~, 
\] with the obvious norm $\| f \|_{{\rm Co} Y} = \| \mathcal{W}_\psi f \|_Y$. Then ${\rm Co} Y$ is a well-defined Banach space, and it is {\em independent} of the choice of $\psi \in \mathcal{A}_{v_0} \setminus \{ 0 \}$: Changing $\psi$ results in an equivalent norm. 
\item Fix a nonzero $\psi \in \mathcal{B}_{v_0}$. Then the coorbit space norm is equivalent to the discretized norm $\left\| \mathcal{W}_\psi f|_Z \right\|_{Y_d}$, for all suitably dense and discrete subsets $Z \subset G$, with a suitably defined Banach sequence space $Y_d$. This also gives rise to atomic decompositions, i.e., systems of wavelets that provide frame-like decompositions converging not just in ${\rm L}^2$, but also in the coorbit space norms. 
\end{enumerate}

\subsection{Aims of this paper}
  
Clearly, the application of coorbit theory hinges on the availability and accessibility of elements in $\mathcal{A}_{v_0}$ and $\mathcal{B}_{v_0}$. Ideally, one would wish for transparent criteria similar to the above-mentioned ones for wavelet ONB's in the one-dimensional case, i.e., in terms of smoothness, decay and vanishing moments. The papers \cite{Fu_coorbit,Fu_atom} 
are chiefly concerned with providing such criteria for general dilation groups. Here it is of key importance to employ a notion of vanishing moments that takes into account the dual orbit in a proper way. 
\begin{definition} \label{defn:van_mom}
 Let $r \in \mathbb{N}$ be given.
 $f \in {\rm L}^1(\mathbb{R}^d)$ {\bf has vanishing moments in $\mathcal{O}^c$ of order $r$}  if all
 distributional derivatives $\partial^\alpha \widehat{f}$ with $|\alpha|\le r$ are
 continuous functions, and all derivatives of degree $|\alpha|<r$ are vanishing on $\mathcal{O}^c$.
\end{definition}
Note that under suitable integrability conditions on $\psi$, the vanishing moment conditions are equivalent to 
\[
 \forall |\alpha| < r,\forall \xi \in \mathcal{O}^c ~:~\int_{\mathbb{R}^d} x^\alpha \psi(x) e^{-2 \pi i \langle \xi, x \rangle} dx = 0~. 
\]

The following theorem summarizes the chief results of \cite{Fu_coorbit,Fu_atom} regarding membership in $\mathcal{A}_{v_0}$ and $\mathcal{B}_{v_0}$. 
Informally, the significance of the theorem can be described as follows: Part (a) shows that there is a large and fairly handy class of nice wavelets around, in the form of bandlimited Schwartz functions. However, if one is interested in analogs of the above-mentioned vanishing moment criteria, parts (b) and (c) provide an answer. Finally, part (d) shows that it is easy to fulfill the criteria from (b) and (c); simply pick a reasonably nice function $f$ and apply the differential operator ${\rm D}_{\mathcal{O}}$ sufficiently often to $f$, to obtain a function fulfilling the conditions of (b) and/or (c). In particular, compactly supported atoms are obtained by starting with a suitably regular, {\em compactly supported} function $f$. 

Note however that parts (b) and (c) depend on the -- currently somewhat mysterious -- additional conditions of (strong) temperate embeddedness. We will clarify these conditions in the following section; at this point, it is sufficient if one understands them as obstacles to the applicability of the theorem. Note also the related point that in order to have concrete criteria, one needs to know the index $\ell$ occurring in each condition, or at least some estimate for it. 

\begin{theorem} \label{thm:main_cited}
Assume that the control weight $v_0$ fulfills $v_0(x,h) \le (1+|x|)^s w_0(h)$, with $s \ge 0$. 
\begin{enumerate}
\item[(a)] If $\widehat{\psi} \in C_c^\infty(\mathcal{O})$, then $\psi \in \mathcal{B}_{v_0}$. 
\item[(b)] Assume that $\mathcal{O}$ is $(s,1,w_0)$-temperately embedded with index $\ell$. Then any function $\psi$ with $|\widehat{\psi}|_{\ell+d+1,\ell+d+1} < \infty$ and vanishing moments in $\mathcal{O}^c$ of order $\ell+d+1$ is in $\mathcal{A}_{v_0}$. 
\item[(c)] Assume that $\mathcal{O}$ is strongly $(s,w_0)$-temperately embedded with index $\ell$. Then any function $\psi$ with with $|\widehat{\psi}|_{\ell+d+1,\ell+d+1} < \infty$ and vanishing moments in $\mathcal{O}^c$ of order $\ell+d+1$ is in $\mathcal{B}_{v_0}$. 
\item[(d)] There exists a partial differential operator ${\rm D}_\mathcal{O}$ with constant coefficients such that, for all $r \in \mathbb{N}$ and all functions $f$ with integrable partial derivatives of order $\le r{\rm deg}({\rm D}_{\mathcal{O}})$, the function $\psi := {\rm D}_{\mathcal{O}}^r f$ has vanishing moments in $\mathcal{O}^c$ of order $r$.  
\end{enumerate}
\end{theorem}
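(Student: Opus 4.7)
The plan is to handle the four parts separately, but to unify (a)--(c) via a common Fourier-side reduction. For any admissible $\psi$ with sufficiently regular $\widehat{\psi}$, Plancherel yields
\begin{equation*}
\mathcal{W}_\psi \psi(x,h) \;=\; |\det h|^{1/2} \int_{\mathbb{R}^d} \widehat{\psi}(\xi)\,\overline{\widehat{\psi}(h^T \xi)}\, e^{2\pi i\langle x,\xi\rangle}\, d\xi,
\end{equation*}
so that membership in $\mathcal{A}_{v_0}$ or $\mathcal{B}_{v_0}$ reduces to a weighted $L^1$-estimate of this kernel on $G$. The control-weight bound $v_0(x,h) \le (1+|x|)^s w_0(h)$ suggests splitting the estimate: the $(1+|x|)^s$-factor is absorbed by integration by parts in $\xi$ (exploiting smoothness of $\widehat{\psi}$), while the $h$-dependent factor $w_0(h)$ must be beaten by decay of $h\mapsto\int\widehat{\psi}(\xi)\overline{\widehat{\psi}(h^T\xi)}\, d\xi$ together with $|\det h|^{1/2}$, where that decay is driven by how $h^T\operatorname{supp}\widehat{\psi}$ is positioned relative to $\mathcal{O}^c$.

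For (a), the compact support of $\widehat{\psi}$ inside $\mathcal{O}$ makes the product $\widehat{\psi}(\xi)\overline{\widehat{\psi}(h^T\xi)}$ vanish unless $h^T$ carries part of $\operatorname{supp}\widehat{\psi}$ back into $\operatorname{supp}\widehat{\psi}$; since $H_{\xi_0}$ is compact and the orbit map $H/H_{\xi_0}\to\mathcal{O}$ is a homeomorphism, this confines $h$ modulo the stabilizer to a relatively compact set, so polynomial boundedness of $w_0$ gives a finite Wiener amalgam norm and $\psi\in\mathcal{B}_{v_0}$. For (b) and (c), compact support is replaced by vanishing moments of order $\ell+d+1$ in $\mathcal{O}^c$: a Taylor expansion of $\widehat{\psi}$ about the point of $\mathcal{O}^c$ closest to $h^T\xi$ kills the first $\ell+d+1$ terms and produces a factor $\operatorname{dist}(h^T\xi,\mathcal{O}^c)^{\ell+d+1}$, and the (strong) temperate embeddedness of $\mathcal{O}$ at index $\ell$ is precisely the hypothesis that converts this distance factor into a power of $w_0(h)$ large enough to dominate the control weight. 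The passage from (b) to (c) amounts to absorbing a local supremum over a small neighborhood of $(x,h)$ into the same estimate, which is what strong temperate embeddedness affords.

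For (d), the plan is algebraic: choose a polynomial $P\colon\mathbb{R}^d\to\mathbb{C}$ vanishing on $\mathcal{O}^c$, which exists in all the dilation-group settings of this paper because $\mathcal{O}^c$ is contained in a proper algebraic subvariety (e.g.\ the vanishing locus of a coordinate function for shearlet orbits, or of a determinant/moment-map expression for abelian groups). Define ${\rm D}_\mathcal{O}$ by the Fourier-multiplier formula $\widehat{{\rm D}_\mathcal{O} f}(\xi) = P(\xi)\widehat{f}(\xi)$, so that iteration yields $\widehat{{\rm D}_\mathcal{O}^{\,r} f}=P^r\widehat{f}$. Since $P$ vanishes to order one on $\mathcal{O}^c$, $P^r$ vanishes to order $r$ there, and the Leibniz rule forces every partial derivative of $P^r\widehat{f}$ of total order strictly less than $r$ to retain at least one factor of $P$, hence to vanish on $\mathcal{O}^c$. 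The main obstacle lies entirely in (b) and (c): pinning down the correct dependence of the index $\ell$ on $s$ and on the polynomial-boundedness exponents of $w_0$, and orchestrating the Taylor-plus-temperate-embeddedness estimate so that integrability against the full control weight actually holds, is the technical core of the argument, and is essentially the content imported from \cite{Fu_coorbit,Fu_atom}.
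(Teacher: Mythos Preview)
The paper does not prove this theorem at all: it simply records that (a) is \cite[Lemma 2.7]{Fu_coorbit}, (b) and (d) are Corollary 4.4 and Lemma 4.1 of the same reference, and (c) is \cite[Theorem 3.4]{Fu_atom}. Your closing sentence correctly recognizes this, and your sketch is a fair summary of the mechanisms behind those cited proofs --- Fourier-side expression of $\mathcal{W}_\psi\psi$, integration by parts to trade smoothness of $\widehat\psi$ against the $(1+|x|)^s$ factor, Taylor expansion at the nearest point of $\mathcal{O}^c$ to exploit vanishing moments, and a polynomial Fourier multiplier for (d).

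Two places where your sketch is a bit loose compared with what the cited arguments actually do. First, in (b)/(c) it is not just ${\rm dist}(h^T\xi,\mathcal{O}^c)$ that matters: the auxiliary function $A$ is a \emph{minimum} of a distance term and $(1+|\xi|)^{-1}$, and both are needed --- the second half controls the large-$\xi$ behaviour, which your Taylor argument alone does not address. Moreover, the two separate integrability conditions in Definition~\ref{defn:mod_embedded} (one with $A(h^T\xi_0)$, one with $A(h^{-T}\xi_0)$) reflect that the integrand $\widehat\psi(\xi)\overline{\widehat\psi(h^T\xi)}$ has \emph{two} factors to which the vanishing-moment bound can be applied; your sketch only mentions one. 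Second, in (d) you justify the existence of a polynomial $P$ vanishing on $\mathcal{O}^c$ by appeal to the specific groups studied later, but the theorem is stated for arbitrary irreducibly admissible $H$. The general argument passes to the Zariski closure $\overline{H}\subset{\rm GL}(d,\mathbb{R})$, an algebraic group whose open orbit still contains $\mathcal{O}$; standard orbit-closure facts for algebraic group actions then force $\mathcal{O}^c$ to be Zariski-closed, yielding $P$. Finally, in (a) you invoke polynomial boundedness of $w_0$, but no such hypothesis is present (or needed): once $h$ is confined to a compact set, continuity of $w_0$ suffices.
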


Part (a) is essentially \cite[Lemma 2.7]{Fu_coorbit}, parts (b) and (d) are loc. cit. Corollary 4.4 resp. Lemma 4.1, and (c) is \cite[Theorem 3.4]{Fu_atom}. 

So far, (strong) temperate embeddedness has been verified for the following dilation groups \cite{Fu_coorbit,Fu_atom}:
\begin{enumerate}
\item Diagonal groups in any dimension; 
\item similitude groups in any dimension; 
\item all possible choices of irreducibly admissible dilation groups in dimension two. 
\end{enumerate}
The different classes were checked on a case-by-case basis, with some similarities observed between the different groups, but  without a sufficiently general understanding of how larger classes of groups could be treated. 
The aims of this paper are the following:
\begin{enumerate}
\item To reduce the task of checking (strong) temperate embeddedness, which requires computing or at least estimating certain integrals and Wiener amalgam norms, to the task of comparing a handful of auxiliary functions; 
\item to demonstrate the scope of the newly derived criteria for (strong) temperate embeddedness (and consequently, the scope of Theorem \ref{thm:main_cited}), by establishing these properties for large classes of dilation groups, to wit
\begin{enumerate}
\item[i.] {\em  abelian} irreducibly admissible dilation groups,
\item[ii.] generalized shearlet dilation groups. 
\end{enumerate}
\end{enumerate}

Item (2).ii is of considerable independent interest. Following the initial construction of shearlets in dimension two, there were two distinct developments of shearlets in higher dimensions. In Section \ref{sect:gen_shearlets} we show that these groups are all special cases of a rather general construction principle, which makes the relationship between the different groups more transparent, and highlights the importance of understanding the abelian dilation groups first. It also makes a large choice of alternative shearlet dilation groups available in higher dimensions, which might be worth further exploration.

\section{Checking temperate embeddedness}

\subsection{Definition of temperate embeddedness}

 The aim of this section is the derivation of conditions on the dual action that will allow to explicitly determine sufficient vanishing moment criteria for wavelets belonging to $\mathcal{A}_{v_0}$ and $\mathcal{B}_{v_0}$. 
The central tool for this purpose is an auxiliary function $A: \mathcal{O} \to \mathbb{R}^+$ defined as follows:
Given any point $\xi \in \mathcal{O}$, let ${\rm
dist}(\xi,\mathcal{O}^c)$ denote the minimal distance of $\xi$ to
$\mathcal{O}^c$, and define
\begin{equation} \label{eqn:A_euclid}
 A(\xi) := \min \left( \frac{{\rm dist}(\xi,\mathcal{O}^c)}{1+\sqrt{|\xi|^2-{\rm dist}(\xi,\mathcal{O}^c)^2}},
 \frac{1}{1+|\xi|} \right)~. 
\end{equation}
By definition, $A$ is a continuous function with $A(\cdot) \le 1$.
If $\eta \in \mathcal{O}^c$ denotes an element of minimal distance to $\xi$, the fact that $\mathbb{R}^+ \cdot \eta  \subset \mathcal{O}^c$ then entails that $\eta$ and $\xi-\eta$ are orthogonal with respect to the standard scalar product on $\mathbb{R}^d$, and we obtain the more transparent expression
\begin{equation} \label{eqn:A_noneuclid}
  A(\xi) = \min \left( \frac{|\xi-\eta|}{1+|\eta|},
 \frac{1}{1+|\xi|} \right)~. 
\end{equation}

Using the auxiliary function $A$, we can now define the different notions of temperate embeddedness.  The first one provides access to criteria for $\mathcal{A}_{v_0}$, via Theorem \ref{thm:main_cited}(b).
\begin{definition} \label{defn:mod_embedded} Let $w: H \to \mathbb{R}^+$ denote a weight
function, $s \ge 0$, and $1 \le q < \infty$. $\mathcal{O}$ is called {\bf
$(s,q,w)$-temperately embedded (with index $\ell \in \mathbb{N}$)} if
the following two conditions hold, for a fixed $\xi_0 \in
\mathcal{O}$.
\begin{enumerate}
\item[(i)] The function $H \ni h \mapsto |{\rm det}(h)|^{1/2-1/q} (1+\|
h \|)^{s+d+1} w(h) A(h^T \xi_0)^\ell$ is in ${\rm L}^q(H)$.
\item[(ii)] The function $H \ni h \mapsto |{\rm det}(h)|^{-1/2-1/q} (1+\| h
\|)^{s+d+1} w(h) A(h^{-T} \xi_0)^\ell$ is in ${\rm L}^1(H)$.
\end{enumerate}
If $\mathcal{O}$ is $(s,q,w)$-temperately embedded for all $1 \le q <
\infty$ and $s \ge 0$, (with an index possibly depending on $s$ and $q$), the orbit
$\mathcal{O}$ is called {\bf $w$-temperately embedded}.
\end{definition}

Note that the submultiplicativity of the weights involved in the conditions easily yields that this definition is independent of the choice of $\xi_0$. 
We next turn to conditions providing vanishing moment criteria for atoms. 
For this purpose, we define a further family of auxiliary functions $\Phi_\ell : H \to \mathbb{R}^+ \cup \{ \infty \}$, for $\ell \in \mathbb{N}$, via 
\begin{equation} \label{eqn:def_Phi_ell}
 \Phi_\ell(h) =  \int_{\mathbb{R}^d} A(\xi)^\ell A(h^T \xi)^\ell d\xi
\end{equation}

Now the following definition allows to formulate sufficient vanishing moment criteria for elements of $\mathcal{B}_{w_0}$, see Theorem \ref{thm:main_cited}(c). 
\begin{definition} \label{defn:str_temperately_embed}
 Let $w: H \to \mathbb{R}^+$ denote a weight, $s \ge 0$. We call $\mathcal{O}$ {\bf strongly $(s,w)$-temperately embedded (with index $\ell \in \mathbb{N}$)} if $\Phi_\ell \in W(C^0,{\rm L}^1_{m})$, where the weight $m: H \to \mathbb{R}^+$
is defined by
\[
m(h) = w(h) |{\rm det}(h)|^{-1/2} (1+\| h \|)^{2(s+d+1)} ~.
\]
\end{definition}

\begin{remark}
 Let us give a short, informal description of the roles of the open dual orbit $\mathcal{O}$, and of the associated auxiliary functions $A$ and $\Phi_{\ell}$. The open dual orbit can be understood as the set of frequencies that the wavelet transform can easily resolve. The chief purpose of the previous papers \cite{Fu_coorbit,Fu_atom} was to establish the idea that nice wavelets are characterized by suitable space-frequency concentration, where the frequency concentration needs to be understood {\em with reference to the open dual orbit}. The first indicator that this notion is correct is provided by part (a) of Theorem \ref{thm:main_cited}: Any Schwartz function whose Fourier transform is compactly supported inside $\mathcal{O}$ is a nice wavelet. Note that, by definition, these functions have all moments vanishing in $\mathcal{O}^c$. 
 
 For compactly supported functions however, only vanishing moments of finite  order can be expected. Here one needs to quantify how a certain number of vanishing moments (together with other assumptions) translates to a suitable decay of wavelet coefficients. For this purpose the auxiliary functions $A$ and $\Phi_\ell$ are instrumental, see e.g. \cite[Lemma 3.7]{Fu_atom} for an explicit decay statement. However, in order to gauge whether the decay is in fact sufficient to conclude weighted integrability, additional requirements are necessary, whence the conditions in Definition \ref{defn:mod_embedded} arise. Finally, containment of the wavelet coefficients in a suitable Wiener amalgam space incurs further restrictions, which are reflected in Definition \ref{defn:str_temperately_embed}. 
\end{remark}

\subsection{Variations of the auxiliary function $A$} \label{subsect:norm_ind}

An often exploited feature of finite-dimensional vector spaces is that all norms on these spaces are equivalent.
The aim of this subsection is to show that the definition of the envelope functions used to establish vanishing moment criteria does not depend  in an essential way on the euclidean norm: If one defines the envelope function via an analog of equation (\ref{eqn:A_noneuclid}), with the euclidean norm replaced by any other norm, and $\eta$ denoting the distance minimizer with respect to the new norm, the resulting function is equivalent to the original envelope function; i.e., their quotient is bounded from above and away from zero. As a consequence, we will obtain a simple argument that the properties of (strong) temperate embeddedness are invariant under conjugacy. This was mentioned (and used) in \cite{Fu_coorbit,Fu_atom}, but since the norm need not be invariant under a given linear change of coordinates, we found it useful to elaborate on this point. The following somewhat technical lemma contains the main estimate. 

\begin{lemma}
Let $\mathcal{O}^c \subset \mathbb{R}^d$ denote the complement of the open dual orbit, and assume that we are given two norms $|\cdot|_i$, for $i=1,2$, on $\mathbb{R}^d$. Given $\xi \in \mathcal{O}$, we let
\[
 \eta_i = \eta_i(\xi) = \operatorname{argmin}_{y \in \mathcal{O}^c} |\xi-y|_i~;
\] in the case of more than one minimizers $\eta_i$ can be chosen arbitrarily among them. 
Furthermore, define
\[
 A_i(\xi) = \min \left( \frac{|\xi- \eta_i|_i}{1+|\eta_i|_i}, \frac{1}{1+|\xi|_i} \right)~.
\]
Then  $A_1 \preceq A_2 \preceq A_1$.
\end{lemma}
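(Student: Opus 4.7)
The plan is to reduce the lemma to a norm-independent comparison by introducing the auxiliary function
\[
 B_i(\xi) \;=\; \frac{\min(d_i(\xi),1)}{1+|\xi|_i}, \qquad d_i(\xi) := \inf_{\eta\in\mathcal{O}^c}|\xi-\eta|_i,
\]
and proving $A_i \asymp B_i$ with constants depending only on the two norms (I write $X \asymp Y$ to mean $X \preceq Y$ and $Y \preceq X$). The appeal of $B_i$ is twofold: it mentions no distance-minimizer, so any non-uniqueness of $\eta_i$ is automatically swept aside; and the norm equivalence $|\cdot|_1 \asymp |\cdot|_2$ passes verbatim to the ingredients $|\xi|_i$ and $d_i(\xi)$ of $B_i$, giving $B_1 \asymp B_2$ at once. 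The lemma then reduces to $A_i \asymp B_i$ for each individual $i$.

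Before the case analysis I would record two preliminary facts. First, $0 \in \mathcal{O}^c$: since $\xi_0 \neq 0$ (else $\mathcal{O}=\{0\}$, contradicting the full-measure assumption) and $h^T$ is invertible, no element of $\mathcal{O}$ equals $0$. In particular $d_i(\xi) \le |\xi|_i$ for every $\xi \in \mathcal{O}$. Second, for any minimizer $\eta_i$ the triangle inequality pins down
\[
 |\eta_i|_i \in \bigl[\,|\xi|_i - d_i(\xi),\;|\xi|_i + d_i(\xi)\,\bigr],
\]
so $1 + |\eta_i|_i$ is trapped in an interval determined by $|\xi|_i$ and $d_i(\xi)$ alone.

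Setting $\alpha_i := d_i(\xi)/(1+|\eta_i|_i)$ and $\beta_i := 1/(1+|\xi|_i)$, the equivalence $A_i = \min(\alpha_i,\beta_i) \asymp B_i$ falls out of a three-way case split, carried out uniformly in $i$. If $|\xi|_i \le 1$, then $|\eta_i|_i \le 2$, so both $1+|\eta_i|_i$ and $1+|\xi|_i$ are bounded above and below by absolute constants; hence $\alpha_i \asymp d_i$, $\beta_i \asymp 1$, and $A_i \asymp \min(d_i,1) = d_i \asymp B_i$. If $|\xi|_i > 1$ and $d_i \le 1$, then $|\eta_i|_i = |\xi|_i \pm O(1)$, so $1+|\eta_i|_i \asymp 1+|\xi|_i$, giving $\alpha_i \asymp d_i/(1+|\xi|_i) = B_i$ and $\alpha_i \preceq \beta_i$; hence $A_i \asymp \alpha_i \asymp B_i$. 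Finally, if $|\xi|_i > 1$ and $d_i > 1$, the elementary bound
\[
 \alpha_i \;\ge\; \frac{d_i}{1 + |\xi|_i + d_i} \;\ge\; \frac{d_i}{3|\xi|_i} \;\ge\; \frac{1}{3|\xi|_i}
\]
(using $d_i \le |\xi|_i$ and $|\xi|_i \ge 1$) shows $\beta_i \preceq \alpha_i$, so $A_i \asymp \beta_i = B_i$, which in this regime equals $B_i$.

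The main point to watch is that the case analysis remain honest under the choice of minimizer: all admissible $\eta_i$ lie in the single interval isolated above, and the case labels depend only on $|\xi|_i$ and $d_i(\xi)$, so $\alpha_i$, and therefore $A_i$, is pinned down up to absolute constants by the pair $(|\xi|_i, d_i(\xi))$ alone. A pleasant by-product of this reasoning is that $A_i$ itself is well-defined up to equivalence independently of the chosen minimizer -- a point implicit in the lemma's phrasing and relied on in later applications, most notably the promised invariance of (strong) temperate embeddedness under conjugacy.
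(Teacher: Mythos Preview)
Your proof is correct and takes a genuinely different route from the paper's. The paper compares $A_1$ and $A_2$ directly: using the norm-equivalence constants $c_1 \le 1 \le c_2$ it chains inequalities between the distances $|\xi-\eta_1|_1$ and $|\xi-\eta_2|_2$, and then runs a case analysis (three cases, governed by the ratio $|\xi-\eta_1|_1/|\xi|_1$ and by which term realizes the minimum in $A_1$) in which both norms appear simultaneously throughout. Your argument instead factors through the minimizer-free auxiliary $B_i(\xi)=\min(d_i(\xi),1)/(1+|\xi|_i)$: you show $A_i \asymp B_i$ for each norm separately, and then $B_1 \asymp B_2$ is immediate from $d_1 \asymp d_2$ and $|\xi|_1 \asymp |\xi|_2$. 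This decoupling is cleaner---the case split involves only one norm at a time, and the ambiguity in the choice of $\eta_i$ is handled automatically since $B_i$ never mentions it. The paper's direct approach, on the other hand, avoids introducing an extra object and keeps the argument entirely within the original functions $A_1,A_2$. Both proofs ultimately rest on the same two ingredients you isolate at the start: $0\in\mathcal{O}^c$ (giving $d_i(\xi)\le|\xi|_i$) and the triangle-inequality bound $|\eta_i|_i \in [\,|\xi|_i-d_i,\,|\xi|_i+d_i\,]$; the paper uses the weaker consequence $|\eta_i|_i \le 2|\xi|_i$ in its place.
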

\begin{remark}
 Note that the definition of the functions $A_i$ in the lemma allows a certain ambiguity at all points $\xi$ for which the distance minimizer $\eta_i$ is not unique; here choosing a different minimizer can result in a different value of $A_i(\xi)$. It is therefore important to note that the statement holds for any choice of distance minimizers. 
\end{remark}

\begin{prf}
 By symmetry it is sufficient to prove the first inequality. 
Since all norms on finite-dimensional vector spaces are equivalent, there are constants $0 <  c_1  \le 1 \le c_2$ such that for all $x \in \mathbb{R}^d$,
\[
 c_1 |x|_1 \le |x|_2 \le c_2 |x|_1~.
\] By the choice of $\eta_1,\eta_2 \in \mathcal{O}^c$ as distance minimizers with respect to the corresponding norms, we obtain the following chain of inequalities:
\begin{equation} \label{eqn:equiv_dist}
 c_1 |\xi - \eta_1|_1 \le  c_1 |\xi - \eta_2 |_1 \le |\xi - \eta_2|_2 \le  |\xi - \eta_1|_2 \le c_2 |\xi-\eta_1|_1 ~.
\end{equation}
Furthermore, since $0 \in \mathcal{O}^c$, we also have
\begin{equation} \label{eqn:est_eta_xi}
 |\eta_i|_i \le 2 |\xi|_i~. 
\end{equation}
Here, it is important that both (\ref{eqn:equiv_dist}) and (\ref{eqn:est_eta_xi}) hold for any choice of distance minimizers $\eta_1,\eta_2$. 
Finally, we note 
 \begin{equation} \label{eqn:norm_kw}
  \frac{1}{1+|\xi|_2} \ge \frac{1}{c_2} \frac{1}{1+|\xi|_1}~. 
 \end{equation} 
We now distinguish three cases. For this purpose, fix $0 < \varepsilon < 1/2$. \\
{\bf Case 1:} $\frac{|\xi - \eta_1|_1}{|\xi|_1} < \varepsilon$.\\
In this case, the triangle inequality yields
\[
 |\eta_1|_1 \ge \frac{|\xi|_1}{2}~. 
\]
But then, using (\ref{eqn:equiv_dist}), we get 
\[ \frac{|\xi- \eta_2|_2}{1+|\eta_2|_2} \ge \frac{c_1}{2} \frac{|\xi- \eta_1|_1}{1+|\xi|_2} \ge
  \frac{c_1}{2 c_2} \frac{|\xi- \eta_1|_1}{1+|\xi|_1} \ge   \frac{c_1}{4 c_2} \frac{|\xi- \eta_1|_1}{1+|\eta_1|_1}~.
\] Combining this with (\ref{eqn:norm_kw}) yields $A_1(\xi) \preceq A_2(\xi)$. \\
 {\bf Case 2:} $\frac{|\xi - \eta_1|_1}{|\xi|_1} \ge \varepsilon$, and $\frac{|\xi-\eta_1|_1}{1+|\eta_1|_1} \le \frac{1}{1+|\xi|_1}$.\\
 Here we need to show that 
 \begin{equation} \label{eqn:casetwo}
  \frac{|\xi-\eta_1|_1}{1+|\eta_1|_1} \preceq \min \left( \frac{|\xi- \eta_2|_2}{1+|\eta_2|_2}, \frac{1}{1+|\xi|_2} \right)~.
 \end{equation}
The assumptions entail 
\[
 \frac{1}{1+|\xi|_1} \ge  \frac{|\xi-\eta_1|_1}{1+|\eta_1|_1} \ge  \frac{\varepsilon |\xi|_1}{1+|\eta_1|_1} 
\]
resulting in the quadratic inequality
\[
 1 + 2 |\xi|_1 \ge 1+|\eta_1|_1 \ge \varepsilon (1+|\xi|_1) |\xi|_1~.
\] The set of $\xi$ fulfilling this inequality is clearly bounded with respect to $|\cdot|_1$, and by norm equivalence, we obtain a $C>0$ such that
$|\xi|_2 \le C$. We then find
\[
 \frac{|\xi- \eta_2|_2}{1+|\eta_2|_2} \ge  \frac{|\xi- \eta_2|_2}{1+2C} \ge  \frac{c_1|\xi- \eta_1|_1}{1+2C} 
  \ge \frac{c_1}{1+2C} \frac{|\xi- \eta_1|_1}{1+|\eta_1|_1}~. 
\]
In combination with (\ref{eqn:norm_kw}), this entails  (\ref{eqn:casetwo}).\\
 {\bf Case 3:} $\frac{|\xi - \eta_1|_1}{|\xi|_1} \ge \varepsilon$, and $\frac{|\xi-\eta_1|_1}{1+|\eta_1|_1} > \frac{1}{1+|\xi|_1}$.\\
Here we need to prove
 \begin{equation} \label{eqn:casethree}
  \frac{1}{1+|\xi|_1} \preceq \min \left( \frac{|\xi- \eta_2|_2}{1+|\eta_2|_2}, \frac{1}{1+|\xi|_2} \right)~.
 \end{equation}
 First note that inequality (\ref{eqn:norm_kw}) immediately implies (\ref{eqn:casethree}) unless
 \[
  \frac{|\xi- \eta_2|_2}{1+|\eta_2|_2} \le \frac{1}{1+|\xi|_2}
 \] holds. Assuming this inequality, together with 
 \[
   \frac{|\xi- \eta_2|_2}{|\xi|_2} \ge \frac{c_1 |\xi-\eta_1|_1}{c_2 |\xi|_1} \ge \frac{c_1}{c_2} \varepsilon 
 \] allows to conclude, just as in Case 2, the existence of a constant $C>0$ such that $|\xi|_2 \le C$. We thus obtain
 \[
  \frac{|\xi- \eta_2|_2}{1+|\eta_2|_2}  \ge \frac{c_1}{1+2C} \frac{|\xi- \eta_1|_1}{1+|\eta_1|_1} \ge  \frac{c_1}{1+2C}   \frac{1}{1+|\xi|_1}~.
 \]
Combining this with (\ref{eqn:norm_kw}) yields  (\ref{eqn:casethree}). 
\end{prf}

\begin{lemma}
 Let $H$ be a matrix group with unique open dual orbit $\mathcal{O}_1$ and compact associated stabilizers. Let $g \in {\rm GL}(\mathbb{R}^d)$ denote an arbitrary invertible matrix, and let $H_g = g^{-1} H g$.
 Then $\mathcal{O}_2 = g^T \mathcal{O}_1$ is the unique open dual orbit of $H_g$. Let $A_i: \mathcal{O}_i \to \mathbb{R}^+$ denote the associated envelope functions, defined according to equation (\ref{eqn:A_euclid}). 
 
 Then there are constants $0 < c_1 \le c_2 $ such that for all $\xi \in \mathcal{O}_1$:  
 \[
  c_1 A_1(\xi) \le  A_2 (g^T \xi) \le c_2 A_1(\xi)~. 
 \]
In particular, $\mathcal{O}_1$ is (strongly) $(s,w_1)$-temperately embedded iff $g^T \mathcal{O}$ has the same properties, with the same index, and with reference to the pair $(s,w_2)$, where the weight $w_2$ is obtained as $w_2(h) = w_1(g h g^{-1})$. 
\end{lemma}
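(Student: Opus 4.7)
The plan has three stages: identify $\mathcal{O}_2 = g^T\mathcal{O}_1$ as the open dual orbit of $H_g$ with compact stabilizers, establish the pointwise two-sided comparison $A_1(\xi) \preceq A_2(g^T\xi) \preceq A_1(\xi)$ by a change of norm via the preceding lemma, and then transfer the integral and Wiener-amalgam conditions along the topological isomorphism $\phi: H \to H_g$, $h \mapsto g^{-1}hg$. The orbit identification is routine: since $(g^{-1}hg)^T = g^T h^T g^{-T}$, the dual action of $h_g \in H_g$ sends $g^T\xi_0$ to $g^T h^T \xi_0$, so the $H_g$-orbit of $g^T\xi_0$ is precisely $g^T\mathcal{O}_1 = \mathcal{O}_2$, which is open of full measure, and its stabilizer is $g^{-1} H_{\xi_0} g$, compact as the image of a compact group under a continuous isomorphism.

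For the core estimate, I would introduce on $\mathbb{R}^d$ the auxiliary norm $|y|_g := |g^{-T} y|$. Using $\mathcal{O}_2^c = g^T \mathcal{O}_1^c$ together with the identities $|g^T\xi - g^T z|_g = |\xi - z|$ and $|g^T z|_g = |z|$, minimization of the $|\cdot|_g$-distance of $g^T\xi$ to $\mathcal{O}_2^c$ coincides exactly with Euclidean distance minimization of $\xi$ to $\mathcal{O}_1^c$. Consequently, the envelope function on $\mathcal{O}_2$ built from $|\cdot|_g$ via formula (\ref{eqn:A_noneuclid}), say $A_2^{(g)}$, satisfies $A_2^{(g)}(g^T\xi) = A_1(\xi)$ identically. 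Applying the preceding norm-independence lemma on $\mathcal{O}_2$ to the pair of norms $|\cdot|$ and $|\cdot|_g$ then delivers $A_1(\xi) = A_2^{(g)}(g^T\xi) \preceq A_2(g^T\xi) \preceq A_2^{(g)}(g^T\xi) = A_1(\xi)$, which is the advertised two-sided bound.

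Turning to the temperate embedding consequence, the map $\phi$ is a topological group isomorphism, so the pushforward of a left Haar measure on $H$ is a constant multiple of one on $H_g$. Along $\phi$ one has $|\det(h_g)| = |\det(h)|$, the equivalence $1 + \|h_g\| \preceq 1 + \|h\| \preceq 1 + \|h_g\|$ coming from submultiplicativity of the operator norm, and $w_2(h_g) = w_1(gh_gg^{-1}) = w_1(h)$ by definition of $w_2$. Together with $A_2(h_g^T g^T\xi_0) = A_2(g^T h^T\xi_0) \preceq A_1(h^T\xi_0) \preceq A_2(h_g^T g^T\xi_0)$ from the core estimate (and the analogous identity for $h_g^{-T}$), the two integrability conditions of Definition \ref{defn:mod_embedded} for $(\mathcal{O}_2, w_2)$ hold with a given index $\ell$ if and only if their $(\mathcal{O}_1, w_1)$ counterparts hold with the same index.

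For the strong version, substituting $\xi = g^T\zeta$ in the defining integral of $\Phi_\ell$ for $\mathcal{O}_2$ and using the pointwise envelope comparison gives $\Phi_\ell^{H_g}(h_g) \preceq \Phi_\ell^{H}(h) \preceq \Phi_\ell^{H_g}(h_g)$, the Jacobian $|\det(g)|$ being a harmless constant. Since the weight $m$ of Definition \ref{defn:str_temperately_embed} is assembled from $w$, $|\det|$ and $\|\cdot\|$ in the same fashion on either side, transporting via $\phi$ and choosing the compact neighbourhood $U_g \subset H_g$ as $\phi(U)$ identifies the two Wiener-amalgam norms up to a finite multiplicative constant. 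The main obstacle I anticipate is this last bookkeeping step, where one must check that the right local maximum function $\mathcal{M}_U^R$ transforms well under $\phi$; this follows because $\phi$ is a homeomorphism, so $\phi(U)$ is a legitimate defining neighbourhood and the Wiener-amalgam norm is known to be independent of that choice up to equivalence.
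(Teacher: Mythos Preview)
Your argument is correct and follows essentially the same route as the paper: both proofs reduce the comparison $A_1(\xi)\asymp A_2(g^T\xi)$ to the norm-independence lemma by recognizing that one of the two envelope functions, after transport along $g^T$, is precisely the other computed with respect to a modified norm. The only cosmetic difference is that the paper pulls $A_2$ back to $\mathcal{O}_1$ via the norm $|x|_g=|g^Tx|$ and invokes the lemma there, whereas you push $A_1$ forward to $\mathcal{O}_2$ via $|y|_g=|g^{-T}y|$ and invoke the lemma on $\mathcal{O}_2$; these are dual formulations of the same identity. Your treatment of the temperate-embeddedness transfer is in fact more detailed than the paper's, which simply states that ``the remaining statements follow''; the bookkeeping you supply (Haar measure pushforward, determinant and norm equivalence along $\phi$, the substitution in $\Phi_\ell$, and the neighbourhood transport for the amalgam norm) is all correct.
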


\begin{proof}
The statement regarding the relationship of the dual orbits follows from the calculation $(g^{-1} H g)^T (g^T \xi) = g^T H \xi$. 

 The map $\xi \mapsto A_2(g^T \xi)$ can be understood as follows: We compute $\zeta \in (g^T \mathcal{O})^c$ with minimal euclidean distance to $g^T \xi$, and obtain
 \[
  A_2(g^T \xi) = \min \left( \frac{|g^T \xi- \zeta|}{1+|\zeta|},\frac{1}{1+|g^T \xi|} \right) ~. 
 \] Here $|\cdot|$ denotes the euclidean norm. But then $\eta = g^{-T} \zeta$ can be understood as distance minimizer to $\xi$ with respect to the norm defined by $|x|_g = |g^T x|$, and we get
 \[
   A_2(g^T \xi) =  \min \left( \frac{|\xi- \eta|_g}{1+|\eta|_g},\frac{1}{1+|\xi|_g} \right)~.
 \] 
 Thus the previous lemma yields
 \[
  A_1(\xi) \preceq A_2(g^T \xi) \preceq A_1(\xi)~,
 \]
 and the remaining statements follow. 
\end{proof}

\subsection{Easily checked criteria for temperate embeddedness}

The previous papers \cite{Fu_coorbit,Fu_atom} already established (strong) temperate embeddedness for a whole class of dilation groups, including all dilation groups in dimension 2, as well as similitude and diagonal groups in arbitrary dimensions. These groups were dealt with in a case-by-case manner, each requiring somewhat different arguments and calculations. It is the aim of this section to introduce a more systematic approach to the verification of these properties. The immediate use of the following results in the course of this paper lies in their application to abelian and generalized shearlet groups later on; however, they are also of independent interest. 

The general strategy pursued in this section can be summarized as follows: Instead of studying the auxiliary function $A$ on the orbit, we study its pull-back to the dilation group via the canonical projection. This allows a unified treatment of both versions of temperate embeddedness. In addition, the auxiliary functions $\Phi_\ell$ will turn out to be weighted convolution products of the pullbacks, which will allow to use convolution inequalities for weighted amalgam space norms to establish strong temperate embeddedness. 
We therefore fix $\xi_0 \in \mathcal{O}$, and define $A_H : H \to \mathbb{R}_0^+$, $A_H(h) = A(h^T \xi_0)$. 

A further ingredient in the arguments to come is a certain Radon-Nikodym derivative. We introduce a measure $\mu_{\mathcal{O}}$ on the open orbit as the image of Haar measure under the projection map, i.e. $\mu_{\mathcal{O}}(A) = \mu_H(p_{\xi_0}^{-1}(A))$. This is a 
well-defined Radon measure on $\mathcal{O}$, and 
Lebesgue-absolutely continuous with Radon-Nikodym derivative
\[ \frac{d\mu_{\mathcal{O}}(h^T \xi_0)}{d\lambda(h^T \xi_0)} = c_0 \frac{\Delta_H(h)}{ |{\rm
det}(h)|} ~,\] for a positive constant $c_0$; see \cite{Fu96,FuDiss} for more
details.  Hence, possibly after suitable normalization, we have for all 
Borel-measurable $F: \mathcal{O} \to \mathbb{R}^+$,
\begin{equation} \label{eqn:meas_orbit_Haar}
 \int_{\mathcal{O}} F(\xi) d\xi = \int_H F(h^T \xi_0) \frac{|{\rm
 det}(h)|}{ \Delta_H(h)} dh = \int_H F(h^T \xi_0) \Delta_G(h)^{-1} dh ~.
\end{equation}

We start out with the integrability conditions in Definition \ref{defn:mod_embedded}. We note
that the conditions for the following proposition are independent of the choice of $\xi_0$, due to the
submultiplicativity of the involved weights. 
\begin{proposition} \label{prop:crit_temp_embed}
Let $w_0: H \to \mathbb{R}^+$ denote a weight on $H$, and $s\ge 0$. 
 Suppose that the auxiliary function $A_H$ fulfills the following estimates, for suitable exponents $e_1,\ldots,e_4 \ge 0$: 
 \begin{eqnarray}
   w_0(h^{\pm 1}) A_H(h)^{e_1} & \preceq & 1 \label{eqn:dec_est_A1} \\
    \| h ^{\pm 1} \| A_H(h)^{e_2} & \preceq & 1 \label{eqn:dec_est_norm} \\
 |{\rm det}(h^{\pm 1})| A_H(h)^{e_3} & \preceq &  1 \label{eqn:dec_est_det}\\
   \Delta_H(h^{\pm 1}) A_H(h)^{e_4} & \preceq & 1 ~.  \label{eqn:dec_est_A2}
 \end{eqnarray}
 Then, for all $q \ge 1$, the dual orbit $\mathcal{O}$ is $(s,q,w_0)$-temperately embedded, with index 
 \begin{equation} \label{eqn:fix_ell} \ell = \lfloor e_1 + e_2(s+d+1) + \frac{3}{2} e_3 + e_4 \rfloor +d+ 1~.\end{equation}
\end{proposition}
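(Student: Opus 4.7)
The plan is to convert both integrability conditions of Definition \ref{defn:mod_embedded} into integrals over the open orbit $\mathcal{O}$ via the change-of-variables formula \eqref{eqn:meas_orbit_Haar}, estimate the resulting integrand by a power of the auxiliary function $A$ using the four hypotheses, and conclude convergence from the pointwise bound $A(\xi) \le (1+|\xi|)^{-1}$, which gives $\int_\mathcal{O} A(\xi)^{\alpha}\, d\xi < \infty$ whenever $\alpha > d$.

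For condition (i), I would raise the integrand to the $q$-th power and multiply-and-divide by $\Delta_G(h) = \Delta_H(h)|\det(h)|^{-1}$ in order to invoke \eqref{eqn:meas_orbit_Haar}. Since the stabilizer $H_{\xi_0}$ is compact, the factors $|\det|, \Delta_H, \Delta_G$ are constant along the fibers of $p_{\xi_0}$, while $(1+\|\cdot\|)$ and $w_0$ are constant up to bounded multiplicative factors absorbed by $\preceq$; so the integral pushes down cleanly. Viewed as a function of $\xi = h^T \xi_0$, the pushed-down integrand becomes $|\det(h)|^{q/2-2}\Delta_H(h)(1+\|h\|)^{q(s+d+1)}w_0(h)^q A(\xi)^{q\ell}$. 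The four hypotheses then bound each factor by a non-positive power of $A(\xi)$; crucially, $|\det(h)|^{q/2-2}$ is handled by the hypothesis $|\det(h^{\pm 1})| \preceq A_H^{-e_3}$ according to the sign of $q/2-2$, contributing a loss of $|q/2-2|e_3$ whose maximum over $q\ge 1$ is $3/2$, attained at $q=1$. A short case analysis on $q$ shows that the combined exponent $q\ell - qe_1 - qe_2(s+d+1) - |q/2-2|e_3 - e_4$ exceeds $d$ for every $q\ge 1$ as soon as the worst case $q=1$ does; and the choice $\ell = \lfloor X\rfloor + d + 1$ with $X = e_1 + e_2(s+d+1) + \tfrac{3}{2}e_3 + e_4$ gives exactly $d + 1 - \{X\} > d$ at $q=1$.

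For condition (ii) the additional move is to first substitute $g = h^{-1}$, using $\int F(h)\, dh = \int F(g^{-1})\Delta_H(g)^{-1}\, dg$, so that $A(h^{-T}\xi_0)$ turns into $A(g^T\xi_0) = A_H(g)$. Pushing down to $\mathcal{O}$ as above (the factors $\Delta_H^{-1}$ and $\Delta_G$ combine into $|\det|^{-1}$) leaves the integrand $|\det(g)|^{1/q-1/2}(1+\|g^{-1}\|)^{s+d+1} w_0(g^{-1}) A(\xi)^\ell$. Invoking the $h^{-1}$ versions of \eqref{eqn:dec_est_A1}--\eqref{eqn:dec_est_det} and using $|1/q-1/2|\le 1/2$ for $q\ge 1$ bounds this by $A(\xi)^{\ell - e_1 - e_2(s+d+1) - e_3/2}$; with the chosen $\ell$, this exponent is at least $d + e_3 + e_4 > d$, so condition (ii) follows with considerable slack.

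The main obstacle is careful exponent bookkeeping: pinpointing that the tightest constraint arises in (i) at $q=1$ through the maximum $|q/2-2| = 3/2$ is what forces the $\tfrac{3}{2}e_3$ contribution in the formula for $\ell$. A minor technical subtlety is that $(1+\|\cdot\|)$ and $w_0$ are not strictly constant along fibers of $p_{\xi_0}$, but the bounded equivalence coming from the compactness of $H_{\xi_0}$ is enough for the $\preceq$-estimates used throughout.
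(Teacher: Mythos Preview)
Your proposal is correct and follows essentially the same route as the paper: for condition (ii) substitute $h\mapsto h^{-1}$, for condition (i) raise to the $q$th power, then in both cases rewrite the Haar integral using \eqref{eqn:meas_orbit_Haar}, bound the remaining factors by powers of $A_H$ via the four hypotheses, and conclude from $A(\xi)\le(1+|\xi|)^{-1}$ that the resulting integral over $\mathcal{O}$ is finite once the exponent exceeds $d$; the worst constraint indeed occurs at $q=1$ and produces the $\tfrac{3}{2}e_3$ term. The only cosmetic difference is that the paper applies the estimates \emph{before} invoking \eqref{eqn:meas_orbit_Haar}, so the integrand is already a genuine function of $\xi=h^T\xi_0$ and your fiber-constancy remark about $w_0$ and $\|\cdot\|$ along $H_{\xi_0}$ becomes unnecessary.
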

\begin{proof}
 We first verify integrability of 
 \[
h \mapsto   |{\rm det}(h)|^{-1/2-1/q} (1+\| h
\|)^{s+d+1} w_0(h) A(h^{-T} \xi_0)^\ell~,
 \]
 by the following calculation: We have 
 \begin{eqnarray*}
  \lefteqn{ \int_H |{\rm det}(h)|^{-1/2-1/q} (1+\| h
\|)^{s+d+1} w_0(h) A(h^{-T} \xi_0)^\ell dh} \\  & = & \int_H |{\rm det}(h)|^{1/2+1/q}
(1+\| h^{-1} \|)^{s+d+1} w_0(h^{-1}) A_H(h)^\ell  \Delta_H(h)^{-1} dh \\
& = & \int_H |{\rm det}(h)|^{1/q-1/2}
(1+\| h^{-1} \|)^{s+d+1} w_0(h^{-1}) A_H(h)^\ell  \frac{|{\rm det}(h)|}{\Delta_H(h)} dh \\
& \preceq & \int_H A_H(h)^{\ell -e_1 -e_2(s+d+1)-e_3|1/q-1/2|} \frac{|{\rm det}(h)|}{\Delta_H(h)} dh \\
& = & \int_{\mathcal{O}} A(\xi)^{\ell -e_1 -e_2(s+d+1)-e_3|1/q-1/2|} d\xi ~,
 \end{eqnarray*}
where the  inequality used assumptions (\ref{eqn:dec_est_A1}) through (\ref{eqn:dec_est_A2}), and the last equality is due to (\ref{eqn:meas_orbit_Haar}). Now $A(\xi) \le \frac{1}{1+|\xi|}$ implies finiteness of the last integral, as soon as $\ell >  d + e_1 + e_2(s+d+1)+e_3|1/q-1/2|$, which is guaranteed by (\ref{eqn:fix_ell}) and the observation that $|1/q-1/2| \le 1/2$. 

For condition (i) of temperate embeddedness, we again employ  (\ref{eqn:dec_est_A1}) through (\ref{eqn:dec_est_A2}) to obtain the estimate
\begin{eqnarray*}
   \lefteqn{ \int_H|{\rm det}(h)|^{q/2-1} (1+\|
h \|)^{q(s+d+1)} w_0(h)^q A(h^T \xi_0)^{\ell q} dh} \\
& \preceq & \int_H|{\rm det}(h)|^{q/2-2} (1+\|
h \|)^{q(s+d+1)} w_0(h)^q A(h^T \xi_0)^{\ell q-e_4}  \frac{|{\rm det}(h)|}{\Delta_H(h)}  dh
\\
& \preceq & \int_H A(h^T \xi_0)^{\ell q - q e_1 - q(s+d+1)e_2 - |q/2-2| e_3 - e_4}  \frac{|{\rm det}(h)|}{\Delta_H(h)} 
dh \\
& = & \int_{\mathcal{O}} A(\xi)^{\ell q - q e_1 - q(s+d+1)e_2 - |q/2-2| e_3 - e_4} d\xi 
\end{eqnarray*}
where the last equality was again obtained via (\ref{eqn:meas_orbit_Haar}) as above. The last integral is finite as soon as 
\[
 \ell > e_1 + e_2(s+d+1) + e_3|1/2 - 2/q| + \frac{d+e_4}{q}~.
\] This is again guaranteed by equation (\ref{eqn:fix_ell}). 
\end{proof}

Since the index $\ell$ influences the required number of vanishing moments in Theorem \ref{thm:main_cited}, one is generally interested in keeping the exponents $e_i$ as small as possible. For a quick proof of temperate embeddedness, possibly with a suboptimal $\ell$, the following lemma provides a shortcut. 
\begin{lemma} \label{lem:dec_est_redundant}
Condition (\ref{eqn:dec_est_norm}) implies (\ref{eqn:dec_est_det}) and (\ref{eqn:dec_est_A2}), with constants $e_3 = d e_2$ and $e_4 = 2e_2 {\rm dim}(H)$.
%
\end{lemma}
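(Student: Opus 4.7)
The plan is to handle each of the two implications by reducing to standard linear-algebra estimates relating determinants, operator norms, and the modular function of a matrix group.

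First I would dispense with the determinant estimate~(\ref{eqn:dec_est_det}). For any $h \in H \subset \mathrm{GL}(d,\mathbb{R})$, the operator norm dominates every singular value, and the absolute value of the determinant is the product of the $d$ singular values; hence $|\det(h)| \le \|h\|^d$, and likewise $|\det(h^{-1})| \le \|h^{-1}\|^d$. Raising the hypothesis $\|h^{\pm1}\| A_H(h)^{e_2} \preceq 1$ to the $d$-th power gives $\|h^{\pm 1}\|^d A_H(h)^{d e_2} \preceq 1$, and substituting yields (\ref{eqn:dec_est_det}) with $e_3 = d e_2$.

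For the modular function I would use the standard fact that for a closed subgroup $H$ of $\mathrm{GL}(d,\mathbb{R})$ with Lie algebra $\mathfrak{h} \subset \mathbb{R}^{d\times d}$ one has
\[
\Delta_H(h) = |\det(\mathrm{Ad}(h))|,
\]
where $\mathrm{Ad}(h): \mathfrak{h}\to\mathfrak{h}$ is given by $X\mapsto hXh^{-1}$. Viewed as an operator on the finite-dimensional space $\mathfrak{h}$ of dimension $\dim(H)$, its operator norm (with respect to any fixed norm on $\mathfrak{h}$) is bounded, up to a constant depending only on the chosen norm, by $\|h\|\,\|h^{-1}\|$. Bounding the absolute value of its determinant by its operator norm to the power $\dim(H)$ then gives
\[
\Delta_H(h) \;\preceq\; \bigl(\|h\|\,\|h^{-1}\|\bigr)^{\dim(H)},
\]
and the analogous estimate for $\Delta_H(h^{-1})$ is obtained by swapping the roles of $h$ and $h^{-1}$. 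Multiplying the two instances of the hypothesis~(\ref{eqn:dec_est_norm}) gives $\|h\|\,\|h^{-1}\| \preceq A_H(h)^{-2 e_2}$, and raising this to the power $\dim(H)$ yields (\ref{eqn:dec_est_A2}) with $e_4 = 2\, e_2\,\dim(H)$.

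The only non-routine point is the identification of $\Delta_H(h)$ with $|\det(\mathrm{Ad}(h))|$ and the ensuing norm bound on $\mathrm{Ad}(h)$; once these are granted, both implications are a one-line comparison of exponents. I expect this identification to be the main (mild) obstacle, since it is the place where the hypothesis of $H$ being a matrix group is genuinely used; after that, the rest is purely mechanical.
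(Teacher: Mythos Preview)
Your argument is correct and follows essentially the same route as the paper: bounding $|\det(\cdot)|$ by the operator norm to the dimension, and controlling $\Delta_H$ via $\Delta_H(h)=|\det(\mathrm{Ad}(h^{\pm 1}))|$ together with $\|\mathrm{Ad}(h)\|\le\|h\|\,\|h^{-1}\|$. The only cosmetic differences are that the paper justifies the determinant bound via ``$\det$ is a polynomial of degree $k$'' rather than singular values, and uses the convention $\Delta_H(h)=|\det(\mathrm{Ad}(h^{-1}))|$ (citing Folland), which is immaterial since you need the bound for both $h$ and $h^{-1}$ anyway.
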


\begin{proof}
First note that for any linear map $T$ defined on a $k$-dimensional real vector space, the fact that the determinant is a polynomial of order $k$ implies $|{\rm det}(T)| \preceq (1+\|T\|)^k$. Thus (\ref{eqn:dec_est_norm}) implies (\ref{eqn:dec_est_det}). For the second estimate, we recall a well-known fact for Lie groups \cite[Lemma 2.30]{Folland_AHA}, namely that $\Delta_H(h) = |{\rm det}({\rm Ad}(h^{-1}))|$, with ${\rm Ad}$ denoting the adjoint action of $H$ on its Lie algebra $\mathfrak{h}$. In the current setting, where $H$ is a closed matrix group, we can identify $\mathfrak{h}$ with a matrix Lie algebra, and the adjoint action is then given by ${\rm Ad}(h)(X) = h X h^{-1}$. In particular, we obtain $\|{\rm Ad}(h) \| \le \| h \| \| h^{-1}\|$, and consequently the above observation yields
\[
\Delta_H(h)  \preceq \left(1+\| h \| ~\| h^{-1}\|\right)^{{\rm dim}(H)} ,
\] whence we obtain the estimate for $e_4$. 
%
%
%
\end{proof}

Note that if we assume in addition that $w_0$ is a polynomially bounded weight, the lemma implies that temperate embeddedness can be deduced from inequality (\ref{eqn:dec_est_norm}) alone. 

A further simple but useful observation concerns direct products.
\begin{lemma} \label{lem:prod_groups}
Assume that there exist matrix groups $H_1,H_2$ such that 
\[
H = \left\{ \left( \begin{array}{cc} h_1 & 0 \\ 0 & h_2 \end{array} \right) : h_i \in H_i \right\}.  
\] Let $w_0$ be a weight on $H$, and denote by $w_i$ its restriction to $H_i$ (canonically identified with a subgroup of $H$), then we have $w_0(h_1,h_2) \le w_1(h_1) w_2(h_2)$. If the $H_i$ fulfill the estimates (\ref{eqn:dec_est_A1})-(\ref{eqn:dec_est_A2}) with exponents $e_{1,i}, \ldots,e_{4,i}$, then $H$ fulfills the same estimates, with exponents 
\[ e_1 = e_{1,1} + e_{1,2} ~,~ e_2= \max (e_{2,1},e_{2,2})~,~ e_3 = e_{3,1}+e_{3,2}~,~ e_4 = e_{4,1}+e_{4,2}~.\]
\end{lemma}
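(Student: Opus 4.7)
The plan is to reduce all four estimates to a single geometric bound: $A_H(h) \le \min(A_{H_1}(h_1), A_{H_2}(h_2))$. The block-diagonal structure of $H$ forces its open dual orbit to be the product $\mathcal{O}_1 \times \mathcal{O}_2$, and consequently $\mathcal{O}^c = (\mathcal{O}_1^c \times \mathbb{R}^{d_2}) \cup (\mathbb{R}^{d_1} \times \mathcal{O}_2^c)$. Fixing $\xi_0 = (\xi_0^{(1)}, \xi_0^{(2)}) \in \mathcal{O}$ and letting $\eta_1 \in \mathcal{O}_1^c$ realize the euclidean distance to $h_1^T \xi_0^{(1)}$, the point $(\eta_1, h_2^T \xi_0^{(2)})$ lies in $\mathcal{O}^c$ at euclidean distance $|h_1^T \xi_0^{(1)} - \eta_1|$ from $h^T \xi_0$, and has norm at least $|\eta_1|$. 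The form (\ref{eqn:A_noneuclid}) of $A$ then yields $A_H(h) \le A_{H_1}(h_1)$, and by symmetry $A_H(h) \le A_{H_2}(h_2)$.

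Next I would record the auxiliary factorizations. The weight bound $w_0(h_1, h_2) \le w_1(h_1) w_2(h_2)$ is immediate from submultiplicativity applied to $(h_1, h_2) = (h_1, \mathrm{id})(\mathrm{id}, h_2)$. Block-diagonality gives $|\det(h)| = |\det(h_1)| \cdot |\det(h_2)|$ and the operator-norm identity $\|h\| = \max(\|h_1\|, \|h_2\|)$; for the direct product of Lie groups one also has $\Delta_H(h_1, h_2) = \Delta_{H_1}(h_1) \Delta_{H_2}(h_2)$.

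With this in place, for the estimates (\ref{eqn:dec_est_A1}), (\ref{eqn:dec_est_det}) and (\ref{eqn:dec_est_A2}) I would simply split $A_H(h)^{e_{j,1} + e_{j,2}}$ as a product, distribute one copy of $A_H(h)^{e_{j,i}} \le A_{H_i}(h_i)^{e_{j,i}}$ to each factor of $w_0(h^{\pm 1})$, $|\det(h^{\pm 1})|$, or $\Delta_H(h^{\pm 1})$ respectively, and invoke the estimates already assumed for the $H_i$ separately. For (\ref{eqn:dec_est_norm}) the index is a maximum rather than a sum; here I would pick the index $i \in \{1,2\}$ realizing $\max(\|h_1^{\pm 1}\|, \|h_2^{\pm 1}\|)$, use $A_H(h) \le 1$ to replace the exponent $e_{2,i}$ by the possibly larger $e_2 = \max(e_{2,1}, e_{2,2})$ at no cost, and conclude from the bound $\|h_i^{\pm 1}\| A_{H_i}(h_i)^{e_{2,i}} \preceq 1$.

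The main content of the argument is the geometric bound $A_H(h) \le \min_i A_{H_i}(h_i)$; once this is in hand, the four estimates reduce to bookkeeping on exponents. The only subtlety I foresee is the max-versus-sum treatment of the operator-norm exponent, which is cleanly handled by the trivial inequality $A_H \le 1$ together with the fact that the block-diagonal operator norm equals the larger of the two block norms.
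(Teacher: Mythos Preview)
Your approach is correct and essentially parallels the paper's. The paper invokes the estimate $A_H(h)^2 \le A_{H_1}(h_1)\,A_{H_2}(h_2)$, citing the proof of \cite[Lemma~4.6]{Fu_atom}, and then appeals to submultiplicativity of the weight, determinant, and modular function (with the noted exception for the operator norm). You instead prove the sharper pointwise bound $A_H(h) \le \min\bigl(A_{H_1}(h_1), A_{H_2}(h_2)\bigr)$ directly. This stronger form is in fact what is needed to recover the exponents $e_1 = e_{1,1}+e_{1,2}$, $e_3 = e_{3,1}+e_{3,2}$, $e_4 = e_{4,1}+e_{4,2}$ exactly as stated; from the squared inequality alone one would only get $2\max(e_{j,1},e_{j,2})$ without further input. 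So your route is the cleaner one for the bookkeeping, and it makes the argument self-contained rather than deferring to another source.

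One small point to tighten: when you write ``the form (\ref{eqn:A_noneuclid}) of $A$ then yields $A_H(h) \le A_{H_1}(h_1)$'', note that $(\eta_1, h_2^T\xi_0^{(2)})$ is generally \emph{not} the euclidean distance minimizer from $h^T\xi_0$ to $\mathcal{O}^c$, so (\ref{eqn:A_noneuclid}) does not apply verbatim. The clean way to close this is to go back to (\ref{eqn:A_euclid}) and observe that $d \mapsto d/(1+\sqrt{|\xi|^2 - d^2})$ is increasing on $[0,|\xi|]$; since ${\rm dist}(h^T\xi_0,\mathcal{O}^c) \le |h_1^T\xi_0^{(1)} - \eta_1|$ and (by the orthogonality you implicitly use) $|(\eta_1,h_2^T\xi_0^{(2)})|^2 = |h^T\xi_0|^2 - |h_1^T\xi_0^{(1)}-\eta_1|^2 \ge |\eta_1|^2$, the first term in $A_H(h)$ is indeed bounded by $|h_1^T\xi_0^{(1)}-\eta_1|/(1+|\eta_1|)$, and the second term is handled by $|h^T\xi_0| \ge |h_1^T\xi_0^{(1)}|$. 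With that filled in, your argument is complete.
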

\begin{proof}
This follows from the submultiplicativity properties of the involved quantities (note the exception for the norm, due to the block diagonal structure), together with the estimate $A_H(h)^2 \le A_{H_1} (h_1) A_{H_2}(h_2)$, see the proof of \cite[Lemma 4.6]{Fu_atom}. 
\end{proof}

As strong temperate embeddedness involves checking the Wiener amalgam norms of the functions $\Phi_\ell$, it is typically harder to verify. We will nonetheless show that, here as well, the estimates (\ref{eqn:dec_est_A1})-(\ref{eqn:dec_est_A2}) are sufficient.  Again, note that this further simplifies to just verifying (\ref{eqn:dec_est_norm}) whenever $w_0$ is polynomially bounded. We perform the required estimate of the Wiener amalgam norm using a two-step procedure: Step one exhibits the function $\Phi_\ell$ as a convolution product. This will allow to reduce the problem to that of verifying whether a suitable power of $A_H$ is contained in a certain Wiener amalgam space. 

\begin{lemma}
 \label{lem:Phi_l_conv}
 \begin{enumerate}
  \item[(a)] The auxiliary functions $\Phi_\ell$ and $A_H$ are related by 
  \begin{equation}
  \Phi_\ell =  (A_H^\ell |{\rm det}(\cdot)|)^{\sim}  \ast A_H^\ell ~.
 \end{equation}
 Here we used the notation $F^\sim(h) = F(h^{-1})$, for any function $F$ on $H$. 
 \item[(b)] Let $m$ be any weight on $H$. Then $\Phi_\ell$ is contained in $W(C^0,{\rm L}^1_{m})$ whenever \[ A_H^\ell \in {\rm L}^1_{(\Delta_G m)^\sim}(H) \cap W(C^0,{\rm L}^1_{m})  ~.\]
 \end{enumerate}
\end{lemma}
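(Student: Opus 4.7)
For part (a), my plan is to rewrite the integral defining $\Phi_\ell$ by transporting it from $\mathcal{O}$ to $H$ via the change of variables formula (\ref{eqn:meas_orbit_Haar}). Since $\mathcal{O}^c$ has Lebesgue measure zero, substituting $\xi = g^T\xi_0$ and using $h^T g^T = (gh)^T$ yields
\[
 \Phi_\ell(h) = \int_H A_H(g)^\ell\, A_H(gh)^\ell\, \Delta_G(g)^{-1}\, dg.
\]
Then, writing $\Delta_G(g)^{-1} = |\det(g)|/\Delta_H(g)$ and performing the substitution $g \mapsto g^{-1}$, whose effect on left Haar measure is multiplication by $\Delta_H(g)^{-1}$, converts the integral to
\[
 \int_H A_H(g^{-1})^\ell\, |\det(g^{-1})|\, A_H(g^{-1}h)^\ell\, dg,
\]
which by inspection is the convolution $((A_H^\ell\,|\det(\cdot)|)^\sim \ast A_H^\ell)(h)$ against a left Haar measure on $H$.

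For part (b), I will combine the representation from (a) with a standard weighted convolution inequality for Wiener amalgam spaces. Writing $F = A_H^\ell\,|\det(\cdot)|$ and $G = A_H^\ell$, one has $\Phi_\ell = F^\sim \ast G$, and submultiplicativity of the weight $m$ gives the estimate
\[
 \|F^\sim \ast G\|_{W(C^0,{\rm L}^1_m)} \preceq \|F^\sim\|_{{\rm L}^1_m}\, \|G\|_{W(C^0,{\rm L}^1_m)}.
\]
The plan is then to identify the two factors on the right with the hypotheses. The amalgam factor is immediate: $G = A_H^\ell \in W(C^0,{\rm L}^1_m)$ by assumption. For the ${\rm L}^1$ factor, a change of variables $h \mapsto h^{-1}$ in the defining integral of $\|F^\sim\|_{{\rm L}^1_m}$ produces a factor of $\Delta_H(h)^{-1}|\det(h)| = \Delta_G(h)^{-1}$, showing that $\|F^\sim\|_{{\rm L}^1_m} = \|A_H^\ell\|_{{\rm L}^1_{(\Delta_G m)^\sim}}$; this matches the other hypothesis.

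Continuity of $\Phi_\ell$ (needed for membership in $W(C^0,{\rm L}^1_m)$) follows from continuity of $A_H$ together with dominated convergence, the latter justified by the integrability of $A_H^\ell$ against the weight built into the convolution.

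The main obstacle I anticipate is bookkeeping rather than conceptual: keeping track of the modular function of $H$, the distinction between $\Delta_H$ and $\Delta_G$, and the direction of the inversion $h \mapsto h^{-1}$ in each change of variables. The convolution inequality itself for left-sided amalgams is standard under submultiplicativity of $m$, but I would want to be careful that the convention for $W(C^0,{\rm L}^1_m)$ in use (the left-sided one, as agreed earlier in the paper) matches the inequality being invoked, and that the standard inequality applies in the left-sided form $\|u \ast v\|_{W(C^0,{\rm L}^1_m)} \preceq \|u\|_{{\rm L}^1_m}\|v\|_{W(C^0,{\rm L}^1_m)}$ rather than with the roles of the factors swapped.
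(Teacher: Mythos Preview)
Your proposal is correct and follows essentially the same route as the paper: for (a) the paper also transports the integral to $H$ via (\ref{eqn:meas_orbit_Haar}), rewrites $\Delta_G^{-1}=|\det|/\Delta_H$, inverts $g\mapsto g^{-1}$, and reads off the convolution; for (b) it likewise applies the left-sided Young-type inequality $\|f\ast g\|_{W(C^0,{\rm L}^1_m)}\le\|f\|_{{\rm L}^1_m}\|g\|_{W(C^0,{\rm L}^1_m)}$ and identifies $\|(A_H^\ell|\det|)^\sim\|_{{\rm L}^1_m}$ with the $(\Delta_G m)^\sim$-weighted norm of $A_H^\ell$ exactly as you describe. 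The only difference is cosmetic: the paper spells out a short proof of that amalgam convolution inequality for completeness, whereas you invoke it as standard.
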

\begin{proof}
 Using relation  (\ref{eqn:meas_orbit_Haar}), we obtain
 \begin{eqnarray*}
  \Phi_\ell(h) & = & \int_{\mathcal{O}} A(h^T \xi)^\ell A(\xi)^\ell d\xi \\
  & = & \int_H A(h^T g^T \xi_0)^\ell A(g^T \xi_0)^\ell  \Delta_G(g)^{-1} dg  \\
  & = & \int_H A_H(gh)^\ell A_H (g)^\ell |{\rm det}(g)| \Delta_H(g)^{-1} dg \\
  & = & \int_H A_H(g^{-1}h)^\ell   A_H(g^{-1})^\ell |{\rm det}(g^{-1})| dg \\
  & = & \int_H (A_H ^\ell |{\rm det}(\cdot)|)^{\sim}(g)  A_H(g^{-1} h)^\ell dg \\
  & = &   (A_H^\ell |{\rm det}(\cdot)|)^{\sim}  \ast A_H^\ell (h)~,
 \end{eqnarray*}
which proves (a). 

Part (b) follows from (a) by employing generalizations of Young's Theorem for Wiener amalgam spaces, more specifically, the estimate
\begin{equation}
 \label{eqn:Young_amalgam}
 \| f \ast g \|_{W(C^0,{\rm L}^1_m)} \le \| f \|_{{\rm L}^1_m} \| g \|_{W(C^0,{\rm L}^1_m)}~, 
\end{equation}
valid for any submultiplicative weight function $m$ and continuous $f,g: H \to \mathbb{C}$.
Estimates of this type are at the core of coorbit theory, and can be found in the original sources \cite{FeiGr0,FeiGr1,FeiGr2}. We include a proof for the sake of completeness. We first observe that
\begin{eqnarray*}
 \left(\mathcal{M}_U (f \ast g) \right) (x) & = & \sup_{z \in U}\left| \int_H f(y) g(y^{-1}xz) dx \right| \\
  & \le & \int_H \sup_{z \in U} |f(y) g(y^{-1} x z)| dx \\
  & = & \left( |f| \ast \mathcal{M}_U g \right) (x)~.
  \end{eqnarray*}
  As a consequence, 
  \begin{eqnarray*}
   \| f \ast g \|_{W(C^0,{\rm L}^1_m)}  & = & \int_H \left(\mathcal{M}_U (f \ast g) \right) (x) m(x) dx \\
   & \le & \int_H \int_H |f(y)| \mathcal{M}_U g(y^{-1}x) m(x) dy dx \\
   & \le & \int_H \int_H |f(y)| m(y)  \left( \mathcal{M}_U g \right) (y^{-1}x) m(y^{-1}x) dx dy \\
   & = & \left\| (|f| m) \ast \left(   \left( \mathcal{M}_U g \right) m \right) \right\|_1 \\
   & \le & \| f \|_{{\rm L}^1_m} \| g \|_{W(C^0, {\rm L}^1_m)}~,
  \end{eqnarray*}
 where the last inequality is due to Young's inequality for ${\rm L}^1$. 
 Applying this to $f = A_H^\ell |{\rm det}(\cdot)|, g = A_H^\ell$, and observing that
 \begin{eqnarray*}
  \left\| \left(A_H^\ell  |{\rm det}(\cdot)| \right)^\sim \right\|_{{\rm L}^1_m} & = & \int_H A_H^{\ell}(h^{-1}) |{\rm det}(h^{-1})| m(h) dh \\
  & = & \int_H A_H^\ell(h) m(h^{-1})|{\rm det}(h)| \Delta_H(h)^{-1} dh ~,
 \end{eqnarray*}
we obtain (b). 
\end{proof}

The following technical lemma provides the key to step two, the Wiener amalgam norm estimates for $A_H^\ell$:
\begin{lemma}
 \label{lem:A_mod} There exists a neighborhood $U \subset H$ of the identity and $C>0$ such that
 \begin{equation} \label{eqn:A_mod}
\forall \xi \in \mathcal{O}~ \forall h \in U ~:~ A(h^T \xi) \le C A (\xi)~. 
 \end{equation}
As a consequence, for any solid Banach function space $Y$ on $H$: 
\begin{equation} \label{eqn:A_H_amalgam} A_H^\ell \in W(C^0,Y) \Leftrightarrow A_H^\ell \in Y~.  
\end{equation}
\end{lemma}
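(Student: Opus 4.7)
The plan is to reduce (\ref{eqn:A_mod}) to the norm-invariance lemma of the previous subsection, by viewing the action of $h^T$ as a change of norm. For fixed $h \in H$, introduce the norm $|x|_h := |h^T x|$ on $\mathbb{R}^d$. Since $\mathcal{O}$ is an $H$-orbit under the dual action, $h^T$ restricts to a bijection of $\mathcal{O}^c$ onto itself, and under the correspondence $\eta \mapsto h^T \eta$ the Euclidean-closest points in $\mathcal{O}^c$ to $h^T\xi$ are carried to the $|\cdot|_h$-closest points in $\mathcal{O}^c$ to $\xi$. A direct calculation, using $|h^T\xi - h^T\eta| = |\xi - \eta|_h$ and $|h^T\eta| = |\eta|_h$, then yields
\[
 A(h^T \xi) \;=\; \min\left( \frac{|\xi - \eta|_h}{1+|\eta|_h},\ \frac{1}{1+|\xi|_h} \right),
\]
i.e., $A(h^T \xi)$ is precisely the envelope function $A_h(\xi)$ built from the norm $|\cdot|_h$ in the sense of the previous lemma.

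Next I would choose a neighborhood $U$ of the identity in $H$ small enough that $\|h^T\| \le 2$ and $\|h^{-T}\| \le 2$ for every $h \in U$; this is possible by continuity of $h \mapsto \|h^{\pm T}\|$. For such $h$ the norm equivalence $\tfrac{1}{2}|x| \le |x|_h \le 2|x|$ holds uniformly, so applying the previous lemma to the pair $(|\cdot|, |\cdot|_h)$ gives $A_h(\xi) \le C\, A(\xi)$ with a constant $C$ depending only on the bounds $1/2$ and $2$, hence independent of $h \in U$ and $\xi \in \mathcal{O}$. This is exactly (\ref{eqn:A_mod}).

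For the consequence (\ref{eqn:A_H_amalgam}), one unravels the definition of $\mathcal{M}_U$ to get
\[
 \mathcal{M}_U(A_H^\ell)(h) \;=\; \sup_{g \in U} A(g^T h^T \xi_0)^\ell \;\le\; C^\ell A(h^T \xi_0)^\ell \;=\; C^\ell A_H(h)^\ell,
\]
by applying (\ref{eqn:A_mod}) with $\xi = h^T \xi_0$. Conversely, arranging $e \in U$ trivially gives $A_H^\ell(h) \le \mathcal{M}_U(A_H^\ell)(h)$ pointwise. Both directions of (\ref{eqn:A_H_amalgam}) then follow from the solidity of $Y$.

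The only real point requiring care is the identity $A(h^T\xi) = A_h(\xi)$, which rests on the bijection of $\mathcal{O}^c$ under $h^T$ and on the corresponding bijection between the two distance-minimization problems. Once this identification is in place, the lemma becomes essentially a restatement of the previous one, applied to the one-parameter family of norms $\{|\cdot|_h\}_{h \in U}$ with uniformly controlled equivalence constants.
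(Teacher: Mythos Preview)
Your argument is correct and is in fact more economical than the paper's own proof. The paper does not invoke the norm-equivalence lemma here; instead it carries out a direct four-case analysis (splitting according to the relative sizes of $|\xi-\xi'|$, $|\xi'|$, and $1$, where $\xi'$ is the Euclidean-nearest point of $\mathcal{O}^c$) and extracts explicit numerical constants such as $C=45$. You instead recognize that $A(h^T\xi)$ is literally the envelope function built from the norm $|\cdot|_h$, and then observe that the implicit constants in the norm-equivalence lemma depend only on the equivalence constants $c_1,c_2$ of the two norms, which are uniformly $\tfrac12$ and $2$ for $h\in U$. The only point worth flagging is that the \emph{statement} of the previous lemma does not record this dependence explicitly; one must inspect its proof to confirm that all constants there are determined by $c_1,c_2$ (and a fixed auxiliary $\varepsilon$) alone. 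Once that is noted, your argument is both shorter and conceptually cleaner, effectively making the paper's four-case computation redundant. The derivation of (\ref{eqn:A_H_amalgam}) is handled identically in both approaches.
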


\begin{proof}
The equivalence (\ref{eqn:A_H_amalgam}) follows from (\ref{eqn:A_mod}), since the latter implies
\[
A_H^\ell(h) \le \mathcal{M}_U(A_H^\ell)(h) \le C^\ell A_H^\ell(h)~. 
\]

For the proof of (\ref{eqn:A_mod}) we let $U = \{ h \in H : \| h - {\rm id} \| < 1/2 \}$. Then the triangle inequality yields
 \[
 \frac{1}{2} | \xi | \le  |h^T \xi | \le \frac32 | \xi |~,
 \] for all $\xi \in \mathbb{R}^n$. Now let $\xi \in \mathcal{O}$ and $h \in U$. We let $\xi' \in \mathcal{O}^c$ denote an element of minimal euclidean distance to $\xi$, and $\xi'' \in \mathcal{O}^c$ an element of minimal euclidean distance to $h^T \xi$. Recall from Subsection \ref{subsect:norm_ind} that we have
 \[
  A(\xi) = \min\left( \frac{|\xi - \xi'|}{1+|\xi'|},\frac{1}{1+|\xi|} \right)~,~A(h^T \xi)  = \min\left( \frac{|h^T \xi - \xi''|}{1+|\xi''|}, \frac{1}{1+|h^T \xi|} \right)~.
 \]
 Since $|h^T \xi| \ge \frac{1}{2} |\xi|$, the second terms involved in determining $A(\xi)$ and $A(h^T \xi)$ fulfill 
\begin{equation} \label{eqn:A_H_mod_part}
  \frac{1}{1+|h^T \xi|}  \le 2  \frac{1}{1+|\xi|}
\end{equation}
 Furthermore, invariance of $\mathcal{O}$ under $H^T$ implies the same for the complement, in particular $h^T \xi' \in \mathcal{O}^c$. Hence by choice of $\xi''$ as distance minimizer to $h^T \xi$: 
 \[
  | h^T \xi - \xi'' | \le |h^T (\xi - \xi')| \le \frac32 |\xi-\xi'| ~. 
 \]

 For the proof of (\ref{eqn:A_mod}), we now distinguish four cases.\\
 {\bf Case 1: $16 |\xi - \xi'|^2 \le |\xi'|^2$.}\\
 Here we have 
 \begin{eqnarray*}
  |\xi''|^2 & = & |h^T \xi|^2 - |h^T \xi - \xi''|^2 \ge |h^T \xi|^2 - |h^T(\xi- \xi')|^2 \\
  & \ge & \frac14 |\xi|^2 - \frac{9}{4} |\xi-\xi'|^2 \\
  & = & \frac14 |\xi'|^2 - 2 |\xi -\xi'|^2 \ge \frac18 |\xi'|^2~. 
 \end{eqnarray*}
But then we get
\[
 \frac{|h^T \xi - \xi''|}{1+|\xi''|} \le \frac{\frac32 |\xi - \xi'|}{1+ \frac{1}{\sqrt{8}} |\xi'|} \le 3 \sqrt{2} \frac{|\xi - \xi'|}{1+|\xi'|}~. 
\]
Combined with equation (\ref{eqn:A_H_mod_part}), this yields $A(h^T \xi) \le 3 \sqrt{2} A(\xi)$. \\
{\bf Case 2: $16 |\xi - \xi'|^2 > |\xi'|^2, |\xi'| > 1$.}\\
In this case we find 
\[
 \frac{|\xi-\xi'|}{1+|\xi'|} \ge \frac{1}{4} \frac{|\xi'|}{1+|\xi'|} > \frac{1}{4} \frac{1}{1+|\xi|}~,
\] and thus 
\[
 A(\xi) \ge  \frac{1}{4} \frac{1}{1+|\xi|}~.
\] But this implies
\[
 A(h^T \xi) \le \frac{1}{1+|h^T \xi|} \le 2 \frac{1}{1+|\xi|} \le 8 A(\xi)~. 
\]
{\bf Case 3: $1 \ge 16 |\xi - \xi'|^2 > |\xi'|^2, |\xi'| \le 1$.}\\
In this setting we have 
\[
  |\xi|^2 = |\xi-\xi'|^2 + |\xi'|^2 \le 17 |\xi -\xi'|^2~;
\]
in particular
\[
 |\xi| \le 5 | \xi - \xi'| \le 5~. 
\]
On the one hand, this implies that 
\[
\frac{|\xi|}{30} \le  \frac{|\xi|}{10} \le \frac{|\xi-\xi'|}{2} \le \frac{|\xi-\xi'|}{1+|\xi'|}~.
\]
On the other hand, we get
\[
 \frac{1}{1+|\xi|} \ge \frac{1}{6} \ge \frac{|\xi|}{30}~,
\]
which finally yields 
\[
 A(\xi) \ge \frac{|\xi|}{30}~.
\]  But then
\[
 A(h^T \xi) \le |h^T \xi - \xi''| \le |h^T \xi| \le \frac{3}{2} |\xi| \le 45 A(\xi)~.  
\]
{\bf Case 4: $ 16 |\xi - \xi'|^2 > \max(|\xi'|^2,1), |\xi'| \le 1$.}\\
Here the fact that $|\xi'| \le |\xi|$ yields 
\[
 \frac{|\xi-\xi'|}{1+|\xi'|} \ge \frac{1}{4} \frac{1}{1+|\xi|}~,
\] and thus $A(\xi) \ge \frac{1}{4} \frac{1}{1+|\xi|}$. Now the same reasoning as in Case 2 results in
$A(h^T \xi) \le 8 A(\xi)$.
\end{proof}

\begin{theorem}\label{thm:crit_str_temp_embed}
Let $w_0: H \to \mathbb{R}^+$ denote a weight on $H$, and $s\ge0$. 
 Suppose that the auxiliary function $A_H$ fulfills the estimates (\ref{eqn:dec_est_A1}) through (\ref{eqn:dec_est_A2}), for some exponents $e_1,\ldots,e_4$.
 Then $\mathcal{O}$ is strongly $(s,w_0)$-temperately embedded, with index 
 \[ \ell = \lfloor e_1 + e_2(2s+2d+2) + \frac{3}{2} e_3 +  e_4 \rfloor + d+1 ~.\]
\end{theorem}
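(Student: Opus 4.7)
The plan is to chain the two reduction lemmas \ref{lem:Phi_l_conv} and \ref{lem:A_mod} together and then verify the resulting integrability conditions by the same kind of exponent bookkeeping used in the proof of Proposition \ref{prop:crit_temp_embed}. Concretely, Lemma \ref{lem:Phi_l_conv}(b) reduces the desired statement $\Phi_\ell \in W(C^0,{\rm L}^1_m)$ to the two separate memberships
\[
  A_H^\ell \in {\rm L}^1_{(\Delta_G m)^\sim}(H) \quad \text{and} \quad A_H^\ell \in W(C^0,{\rm L}^1_m).
\]
For the second of these, I would invoke the equivalence \eqref{eqn:A_H_amalgam} of Lemma \ref{lem:A_mod}, which collapses the Wiener amalgam condition to plain weighted integrability: it is enough to show $A_H^\ell \in {\rm L}^1_m$. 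Thus the whole theorem is reduced to proving that two explicit weighted integrals are finite.

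The key step is then to estimate each of these integrals by the same trick as in Proposition \ref{prop:crit_temp_embed}: insert the factor $\Delta_G(h)^{-1} = \Delta_H(h)^{-1} |\det(h)|$, which converts the integral over $H$ into an integral over $\mathcal{O}$ via formula \eqref{eqn:meas_orbit_Haar}. For condition (i), writing $m(h) = w_0(h)|\det(h)|^{-1/2}(1+\|h\|)^{2(s+d+1)}$, multiplying and dividing by $\Delta_G(h)^{-1}$, and using \eqref{eqn:dec_est_A1}--\eqref{eqn:dec_est_A2} to bound $w_0(h) \preceq A_H(h)^{-e_1}$, $(1+\|h\|)^{2(s+d+1)} \preceq A_H(h)^{-2e_2(s+d+1)}$, $|\det(h)|^{-3/2}\preceq A_H(h)^{-3e_3/2}$, and $\Delta_H(h)\preceq A_H(h)^{-e_4}$, one arrives at
\[
  \int_H A_H(h)^\ell m(h)\, dh \;\preceq\; \int_{\mathcal{O}} A(\xi)^{\ell - e_1 - 2e_2(s+d+1) - 3e_3/2 - e_4}\, d\xi,
\]
which is finite provided the exponent exceeds $d$, since $A(\xi) \le (1+|\xi|)^{-1}$. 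This is exactly what the choice of $\ell$ in the statement guarantees.

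For condition (ii), the weight is $(\Delta_G m)^\sim(h) = \Delta_G(h)^{-1} m(h^{-1})$, so the $\Delta_G(h)^{-1}$ factor is present from the outset and only the bounds for $w_0(h^{-1})$, $(1+\|h^{-1}\|)^{2(s+d+1)}$, and $|\det(h)|^{1/2}$ are needed. This produces the orbit integral of $A(\xi)^{\ell - e_1 - 2e_2(s+d+1) - e_3/2}$, which converges under a strictly weaker condition on $\ell$ than the one derived for (i). Consequently the same choice of $\ell$ handles both, and combining the pieces via Lemmas \ref{lem:A_mod} and \ref{lem:Phi_l_conv}(b) yields strong $(s,w_0)$-temperate embeddedness with the stated index.

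I do not expect any genuine obstacle here: having already isolated the convolution identity (Lemma \ref{lem:Phi_l_conv}), the locality estimate (Lemma \ref{lem:A_mod}), and the pushforward formula \eqref{eqn:meas_orbit_Haar}, the argument is a careful, but entirely mechanical, exponent computation, whose only subtle point is that condition (i) requires the extra factors $A_H^{-e_3}$ and $A_H^{-e_4}$ absorbed from $|\det(h)|^{-1}$ and $\Delta_H(h)$, which is why the final index $\ell$ is slightly larger than the one appearing in Proposition \ref{prop:crit_temp_embed}.
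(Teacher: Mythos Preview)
Your proposal is correct and follows essentially the same route as the paper: reduce via Lemmas \ref{lem:Phi_l_conv}(b) and \ref{lem:A_mod} to the two weighted ${\rm L}^1$-conditions $A_H^\ell \in {\rm L}^1_m \cap {\rm L}^1_{(\Delta_G m)^\sim}$, then insert the factor $\Delta_G(h)^{-1}$, apply \eqref{eqn:dec_est_A1}--\eqref{eqn:dec_est_A2}, and push both integrals to $\mathcal{O}$ via \eqref{eqn:meas_orbit_Haar}. Your exponent bookkeeping (yielding $\ell - e_1 - 2e_2(s+d+1) - \tfrac{3}{2}e_3 - e_4$ for the first integral and $\ell - e_1 - 2e_2(s+d+1) - \tfrac{1}{2}e_3$ for the second) matches the paper's computation exactly, including the observation that the second condition is weaker.
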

\begin{proof}
 By the previous two lemmas we need to show that 
 \[ A_H^\ell \in {\rm L}^1_{(\Delta_G m)^\sim}(H)\cap {\rm L}^1_{m}(H)  ~,\]
 where $m(h) = w_0(h) |{\rm det}(h)|^{-1/2} (1+\| h \|)^{2(s+d+1)}$.
 This is done in the same way as in the proof of Proposition \ref{prop:crit_temp_embed}: 
 \begin{eqnarray*}
  \| A_H^\ell \|_{{\rm L}^1_m} & = & \int_H A_H^\ell(h) w_0(h) |{\rm det}(h)|^{-1/2} (1+\| h \|)^{2(s+d+1)} dh \\
   & \preceq & \int_H A_H(h)^{\ell - e_1 -2 e_2(s+d+1) - e_3/2 } dh \\
    & \preceq & \int_H A_H(h)^{\ell - e_1 -2 e_2(s+d+1) - e_3/2 -e_3 - e_4} \Delta_G(h)^{-1} dh \\
    & = & \int_{\mathcal{O}} A(\xi)^{\ell  - e_1 -2 e_2(s+d+1) - \frac{3}{2} e_3 - e_4} d\xi \\
    & < & \infty~,
 \end{eqnarray*}
 by our choice of $\ell$. 
Similarly, 
\begin{eqnarray*}
  \| A_H^\ell \|_{{\rm L}^1_{(\Delta_G m)^\sim}} & = & 
   \int_H A_H^\ell(h) w_0(h^{-1}) |{\rm det}(h)|^{1/2} (1+\| h^{-1} \|)^{2(s+d+1)} \Delta_G(h)^{-1} dh \\
    & \preceq & \int_H A_H(h)^{\ell -  e_1 -2 e_2(s+d+1)-e3/2} \Delta_G(h)^{-1} dh \\
    & = & \int_{\mathcal{O}} A(\xi)^{\ell -  e_1 -2 e_2(s+d+1)-e_3/2} d\xi \\
    & < & \infty~.
\end{eqnarray*}
\end{proof}

\begin{remark} \label{rem:dim_two}
 By comparison to the quite extensive computations in the previous papers  \cite{Fu_coorbit, Fu_atom}, the verification of  (strong) temperate embeddedness for all two-dimensional irreducibly admissible groups via \ref{prop:crit_temp_embed} and  \ref{thm:crit_str_temp_embed} is almost effortless. Recall that we have seen in Subsection \ref{subsect:norm_ind} that (strong) temperate embeddedness is inherited by conjugate groups. Hence, by the classification results in \cite{FuDiss}, the following list covers all irreducibly admissible matrix groups in dimension two. 
 
 In the following, we consider the coorbit spaces ${\rm Co} ({\rm L}^p(G))$ for simplicity, with $p \in [1,\infty]$. By \cite[Lemma 2.3]{Fu_coorbit} and the subsequent remark, we can take $v_0(x,h) = w_0(h) = {\rm max}(1,\Delta_G(h))$ as control weight. Adaptations to more general coefficient spaces such as mixed weighted $L^p$-spaces are straightforward, at least for polynomially bounded weights. Typically, the only additional work to be done is to compute $e_1$ and employ the proper value for $s$, in case the weight depends on the translation variable. 
 \begin{itemize}
  \item {\bf Case 1}: The {\em similitude group} $H = \mathbb{R}^+ \cdot SO(d)$, introduced by Murenzi \cite{Mu}. Elements of $H$ are written as $h = r S$, with $r>0$ and $S$ a rotation matrix, and the open dual orbit is $\mathcal{O} = \mathbb{R}^d \setminus \{ 0 \}$. Hence for the associated differential operator ${\rm D}_{\mathcal{O}}$, one can take the Laplacian, which needs to be applied $\left\lceil \frac{t}{2} \right\rceil$ times to induce vanishing moments of order $t$.  For the auxiliary function, we obtain 
  \[
   A_H(h) = \min\left( |r|,\frac{1}{1+|r|} \right)~,~|{\rm det}(h)| = r^d~,\Delta_H (h) = 1~,
  \] 
  and thus 
  \[
   e_1 = d~,~e_2 = 1~,~e_3 = d~,~e_4 = 0~.
  \] This yields temperate embeddedness with index
  \[
  \ell_1 = \left\lfloor \frac{d}{2} \right\rfloor + 4d + 1
  \] and strong temperate embeddedness with index
  \[
   \ell_2 = \left\lfloor \frac{d}{2} \right\rfloor + 5d +2~,
  \] jointly for all $p$; and adding $d+1$ to these quantities yields sufficient numbers of vanishing moments for analyzing vectors and atoms. 
  \item {\bf Case 2}: The $d$-dimensional {\em diagonal group}. For $d=1$, we obtain $e_1 = e_2 = e_3 = 1, e_4=0$ from the case just considered. Using Lemma \ref{lem:prod_groups}, we obtain for the general case
  \[
   e_1 = d~,~e_2 = 1~,~e_3 = d~,~ e_4 = 0~,
  \] i.e. the {\em same} indices as in the similitude group case, yielding temperate embeddedness with 
   \[
  \ell_1 = \left\lfloor \frac{d}{2} \right\rfloor + 4d + 1
  \] and strong temperate embeddedness with index
  \[
   \ell_2 = \left\lfloor \frac{d}{2} \right\rfloor + 5d +2~,
  \] jointly for all $p$.
  
  Here, the open dual orbit is 
  \[
   \mathcal{O} = \left\{ \xi = (\xi_1,\ldots,\xi_d)^T \in \mathbb{R}^d~:~\prod_{i=1}^d \xi_d \not= 0 \right\}~, 
  \]
and the associated differential operator ${\rm D}_{\mathcal{O}}$ can be chosen as
\[
 {\rm D}_{\mathcal{O}} f  = \frac{{\rm d}}{{\rm d} x_1} \frac{{\rm d}}{{\rm d} x_2} \ldots \frac{{\rm d}}{{\rm d} x_d} f~. 
\]
  \item {\bf Case 3}: The shearlet-type groups, given by 
\[
H =  H_c = \left\{ \pm \left( \begin{array}{cc} a & b \\ 0 & a^c \end{array}
 \right)~:~a, b \in \mathbb{R}, a > 0 \right\}~.
\] Here $c$ can be any real number.

 Haar measure on $H$  is given by $db \frac{da}{|a|^{2}}$, the modular function is $\Delta_H(h) = |a|^{c-1}$, resulting in $\Delta_G(h) = |a|^{-2}$. The dual orbit is
computed as
\[ \mathcal{O} =
 \mathbb{R}^2 \setminus (\{ 0 \} \times \mathbb{R}))~.\]
Here, the natural choice for the associated differential operator is ${\rm D}_{\mathcal{O}} = \frac{{\rm d}}{{\rm d} x_1}$. 
For $h = \left( \begin{array}{cc} a & b \\ 0 & a^c
\end{array}
 \right) \in H$ and $\xi_0 = (1,0)^T \in \mathcal{O}$, we obtain $h^T \xi_0 = (a,b)^T$, and thus
 \[
 A_H(h) = \min \left( \frac{|a|}{1+|b|},\frac{1}{1+|(a,b)^T|}\right)~. 
 \]
 In particular, we find 
 \[
  \| h \| A_H(h)^{\max(1,|c|)} \preceq 1~, 
 \] as well as
 \begin{eqnarray*}
  \| h^{-1} \| A_H(h)^{1+|c|} & = & \left\|  \left( \begin{array}{cc} a^{-1} & -a^{-1-c}b \\ 0 & a^{-c}
\end{array}
 \right) \right\| A_H(h)^{1+|c|} \\
 & \asymp & \max\left( |a|^{-1}, |a|^{-c},|a|^{-1-c}|b| \right) A_H(h)^{1+|c|} \preceq 1
 \end{eqnarray*}
yielding $e_2 = 1+|c|$. The remaining constants can be taken as 
\[
e_1 = 2~,~ e_3 = |1+c|~,~e_4 = |1-c|~,
\]
 and we obtain temperate embeddedness with index 
\[
 \ell_1 = \left\lfloor 3 |c| + |1+c| \frac{3}{2} + |1-c| \right\rfloor +8~,
\]
as well as strong temperate embeddedness with index
\[
 \ell_2 = \left\lfloor  6|c| + |1+c| \frac{3}{2} + |1-c| \right\rfloor +11~. 
\]
Thus, for the case $c=1/2$ corresponding to the original shearlet group, Theorem \ref{thm:main_cited} (b) requires vanishing moments of order 15, whereas part (c) requires order 19. This is quite possibly a rather conservative estimate, but it compares favourably with the only explicit previous estimate for part (c), which is the value $127$ obtained in \cite{Fu_atom}. 
 \end{itemize}

\end{remark}

\section{Abelian dilation groups}

In this section, we assume that $H< {\rm GL}(\mathbb{R}^d)$ is an {\em abelian} irreducibly admissible matrix group. These groups are well-understood, thanks to their close relationship to associative commutative algebras of the same dimension, observed in \cite{Fu_abelian}. Before we state the main results from that paper, it is appropriate to recall a few basic facts from the Lie theory of matrix groups, beginning with the definition of the matrix exponential: 
\[
 \exp(Y) = \sum_{k=0}^\infty \frac{Y^k}{k!}~,
\] which converges for all matrices $Y \in \mathbb{R}^{d \times d}$. We will write $T(\mathbb{R},d) \subset {\rm GL}(\mathbb{R},d)$ for the subgroup of unipotent matrices, i.e, the subgroup consisting of upper triangular matrices with ones on the diagonal. $\mathfrak{t}$ denotes the associated Lie algebra, which consists of all properly upper triangular matrices. Furthermore, we let $\mathfrak{gl}(n,\mathbb{R}) = \mathbb{R}^{d \times d}$, and identify its elements with the linear endomorphisms of $\mathbb{R}^d$ induced by matrix multiplication. 
Given a closed matrix group $H$, the Lie algebra $\mathfrak{h}$ of $H$ is the set of all matrices $Y$ satisfying $\exp(\mathbb{R}Y) \subset H$. It is a Lie subalgebra $\mathfrak{gl}(n,\mathbb{R})$, i.e., for all $X,Y \in \mathfrak{h}$, we have 
\[
 [X,Y] = XY - YX \in \mathfrak{h}~. 
\] 

The following theorem summarizes Theorems 11, 13 and Proposition 12 of \cite{Fu_abelian}. For its formulation, we use $\mathcal{A}^\times$ for the group of multiplicatively invertible elements contained in an algebra $\mathcal{A}$ with unity. 
\begin{theorem} \label{thm:ab_dil}
Let $H< {\rm GL}(\mathbb{R}^d)$ be a closed abelian matrix group. Then the following are equivalent:
\begin{enumerate}
\item[(a)] $H$ is irreducibly admissible. 
\item[(b)] The space $\mathfrak{h} = {\rm span}(H)$ spanned by $H$ is an associative subalgebra of $\mathfrak{gl}(d,\mathbb{R}) = \mathbb{R}^{d \times d}$ containing the identity, and $H= \mathfrak{h}^\times$. In addition, $\mathfrak{h}$ is the Lie algebra of $H$, and there exists $\xi \in \mathbb{R}^d$ such that $\mathfrak{h} \ni X \mapsto X^T \xi \in \mathbb{R}^d$ is a bijection.
\item[(c)] There exists a $d$-dimensional commutative algebra $\mathcal{A}$ with unity and a linear isomorphism $\psi: \mathcal{A} \to \mathbb{R}^d$ such that 
\[ H = \{ h : \exists a \in \mathcal{A}^\times \mbox{ such that } \forall \xi \in \mathbb{R}^d,  h^T \xi = \psi(a \psi^{-1} (\xi)) \}. \]
\end{enumerate}
Furthermore, two closed abelian matrix groups satisfying (a) - (c) are conjugate inside ${\rm GL}(d,\mathbb{R})$ iff the commutative algebras associated via (c) are isomorphic.
\end{theorem}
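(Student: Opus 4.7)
The strategy is to prove (b) $\Leftrightarrow$ (c) by a tautological identification, (c) $\Rightarrow$ (a) by direct computation, and (a) $\Rightarrow$ (b) as the substantive direction; the conjugacy statement follows from functoriality of the correspondence.

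\emph{Easy directions.} For (b) $\Rightarrow$ (c), take $\mathcal{A} = \mathfrak{h}$ with matrix multiplication and $\psi(X) = X^T \xi$ for the $\xi$ of (b); commutativity of $\mathfrak{h}$ gives $h^T \psi(Y) = (hY)^T \xi = \psi(hY)$, matching the description in (c) with $a = h$. For (c) $\Rightarrow$ (a), set $\xi_0 = \psi(1_\mathcal{A})$; the orbit of $\xi_0$ equals $\psi(\mathcal{A}^\times)$, which is open and of full measure because the non-units of $\mathcal{A}$ form the zero-set of the nontrivial norm polynomial $a \mapsto \det(M_a)$, where $M_a$ denotes left multiplication in $\mathcal{A}$. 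The stabilizer $\{a \in \mathcal{A}^\times : a \cdot 1_\mathcal{A} = 1_\mathcal{A}\}$ is trivial, hence compact.

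\emph{The main direction (a) $\Rightarrow$ (b).} Define $\mathfrak{h} := {\rm span}(H) \subseteq \mathbb{R}^{d \times d}$. Multiplicative closure of $H$, the presence of $I$, and commutativity of $H$ immediately make $\mathfrak{h}$ a unital commutative associative subalgebra. Fix $\xi_0 \in \mathcal{O}$ and define $\Phi : \mathfrak{h} \to \mathbb{R}^d$ by $\Phi(X) = X^T \xi_0$. Surjectivity is immediate since $\Phi(\mathfrak{h})$ is a linear subspace containing the nonempty open set $\mathcal{O}$. Injectivity is the crux. Suppose $X^T \xi_0 = 0$; then $(X^T)^k \xi_0 = 0$ for every $k \geq 1$, so $\exp(tX)^T \xi_0 = \xi_0$ for all $t \in \mathbb{R}$. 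Because $\exp(tX)$ lies in the commutative algebra $\mathfrak{h}$, it commutes with every $h \in H$, and hence $\exp(tX)^T (h^T \xi_0) = h^T \exp(tX)^T \xi_0 = h^T \xi_0$ for every $h \in H$. Thus $\exp(tX)^T$ fixes all of $\mathcal{O}$; density of $\mathcal{O}$ (which has full measure) forces $\exp(tX) = I$ for every $t$, so $X = 0$. With $\Phi$ a linear bijection and $\dim \mathfrak{h} = d$, Cayley--Hamilton yields $h^{-1} \in \mathfrak{h}$ for $h \in H$, hence $H \subseteq \mathfrak{h}^\times$. Conversely, for any $a \in \mathfrak{h}^\times$, the set $H \cdot (a^T \xi_0) = a^T(H \cdot \xi_0) = a^T \mathcal{O}$ is open (image of an open set under the linear bijection $a^T$) and therefore an open $H$-orbit, which must coincide with $\mathcal{O}$ by uniqueness. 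Then $a^T \xi_0 \in \mathcal{O} = \Phi(H)$, and the injectivity of $\Phi$ forces $a \in H$. Consequently $H = \mathfrak{h}^\times$, which is open in $\mathfrak{h}$, so $\mathfrak{h}$ is the Lie algebra of $H$.

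\emph{Conjugacy and main obstacle.} If $H_1 = g^{-1} H_2 g$, then conjugation by $g$ is an algebra isomorphism $\mathfrak{h}_2 \to \mathfrak{h}_1$, so $\mathcal{A}_1 \cong \mathcal{A}_2$. Conversely, any algebra isomorphism $\phi : \mathcal{A}_1 \to \mathcal{A}_2$ transported through the linear isomorphisms $\psi_i$ produces the conjugating matrix $g = \psi_2 \circ \phi \circ \psi_1^{-1}$. The main obstacle is the injectivity argument above: it is exactly at this step that one must couple the algebraic (multiplicative) structure of $\ker \Phi$ with the topological density of $\mathcal{O}$, via the exponential-map trick based on the identity $(X^T)^k \xi_0 = 0$ for $X \in \ker \Phi$.
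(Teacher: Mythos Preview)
The paper does not prove this theorem; it merely cites it as a summary of Theorems~11, 13 and Proposition~12 of \cite{Fu_abelian}. So there is no in-paper proof to compare against, and your self-contained argument is a genuine addition.

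Your proof is correct. Two minor simplifications are available. First, the appeal to Cayley--Hamilton is superfluous: since $H$ is a group, $h^{-1} \in H \subset \mathfrak{h}$ automatically, so $H \subseteq \mathfrak{h}^\times$ is immediate. Second, the exponential detour in the injectivity step is unnecessary. If $X \in \mathfrak{h}$ satisfies $X^T \xi_0 = 0$, then for every $h \in H$ commutativity gives directly
\[
X^T (h^T \xi_0) = (hX)^T \xi_0 = (Xh)^T \xi_0 = h^T (X^T \xi_0) = 0,
\]
so $X^T$ annihilates the open set $\mathcal{O} = H^T \xi_0$, hence $X = 0$. This bypasses the identity $(X^T)^k \xi_0 = 0$ and the passage through $\exp(tX)$ entirely. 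Otherwise the structure of your argument---the tautological identification for (b)~$\Leftrightarrow$~(c), the norm-polynomial argument for the open orbit in (c)~$\Rightarrow$~(a), and the uniqueness-of-open-orbit trick for $\mathfrak{h}^\times \subseteq H$---is clean and complete.
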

Thus the theorem sets up a bijection between isomorphism classes of abelian associative algebras with unity and conjugacy classes of irreducibly admissible abelian dilation groups. The linear map
\[
 \rho: \mathcal{A} \ni a \mapsto \left( \xi \mapsto \psi(a \psi^{-1} (\xi) ) \right)
\] is called the {\bf regular representation} of the algebra $\mathcal{A}$.

A further simplification can be obtained by decomposing the algebra into irreducible components. We call an algebra $\mathcal{A}$ over the field $\mathbb{K} \in \{ \mathbb{R},\mathbb{C} \}$ {\bf irreducible}, if $\mathcal{A} = 1_{\mathcal{A}} \cdot \mathbb{K} + \mathcal{N}$, with 
\[ \mathcal{N} = \{ a \in \mathcal{A} : a^k = 0 \mbox{ for some } k>0 \}, \] denoting the {\bf nilradical} of $\mathcal{A}$, i.e. the subalgebra of nilpotent elements. Now any commutative algebra with unity is a unique direct sum of irreducible subalgebras \cite[Theorem 17]{Fu_abelian}.

As a consequence, after a suitable change of coordinates any irreducibly admissible abelian dilation group can be written as a direct product 
\begin{equation} \label{eqn:block_diagonal} H = \left\{ h = \left( \begin{array}{cccc} h_1 & & & \\ & h_2 & & \\ & & \ddots & \\ & & & h_k \end{array} \right) : h_i \in H_i \right\}. \end{equation}
Here each $H_i$ is an irreducibly admissible abelian dilation group in (real) dimension $d_i$, and is in addition the unit group of an irreducible algebra. However, one quickly realizes that the invertible elements in an irreducible algebra are precisely those of the form $r \cdot 1_{\mathcal{A}_i} + a$, with $r \not= 0$, and $a \in \mathcal{N}$. Thus 
\[ H_i \cong \mathbb{K}^\times \cdot 1_{\mathcal{A}_i} + \mathcal{N}_i .\]
With all these observations in place, it is not difficult to establish the following result, which makes the conclusions of Theorem \ref{thm:main_cited} available for all abelian irreducibly admissible dilation groups. 
For a sharper estimate of the constants, we need one more piece of terminology: Given an irreducible algebra $\mathcal{A} = 1_{\mathcal{A}} \cdot \mathbb{K} + \mathcal{N}$, we let $n(\mathcal{A})$ denotes the {\bf nilpotency class} of $\mathcal{A}$, which is the minimal exponent $n$ such that $\mathcal{N}^n = \{ 0 \}$.

\begin{theorem} \label{thm:abelian_temp_embed}
 Let $H$ denote an irreducibly admissible abelian dilation group. 
 Let $n_1,\ldots, n_k$ denote the nilpotency classes of the irreducible blocks entering in the decomposition (\ref{eqn:block_diagonal}) of $H$, and let $n = \max_{i=1,\ldots,k} n_i$. 
 
 Then $H$ fulfills the estimates (\ref{eqn:dec_est_norm})-(\ref{eqn:dec_est_A2}), with exponents
 \[
  e_2 = 2n-1~,~ e_3 = d~,~ e_4 = 0~.  
 \]
In particular, for given polynomial weight $w_0$ on $H$,  $q \in [1,\infty)$ and $s\ge0$, the open dual orbit is both $(s,q,w_0)$-temperately embedded and strongly $(s,w_0)$-temperately embedded, for suitable choices of indices.  
\end{theorem}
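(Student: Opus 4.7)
The plan is to reduce the claim to the case of a single irreducible block via Lemma \ref{lem:prod_groups}, and then to exploit the concrete algebraic description of such blocks. After the change of coordinates bringing $H$ to the block-diagonal form (\ref{eqn:block_diagonal}), each $H_i$ is the unit group of an irreducible algebra $\mathcal{A}_i = \mathbb{K}_i \cdot 1_{\mathcal{A}_i} + \mathcal{N}_i$ with $\mathcal{N}_i^{n_i} = \{0\}$. Since Lemma \ref{lem:prod_groups} combines $e_3$ and $e_4$ additively and takes the maximum of $e_2$, the pattern in the statement --- $e_3 = \sum d_i = d$, $e_4 = 0$, and $e_2 = \max(2n_i-1) = 2n-1$ --- matches per-block estimates $e_{3,i}=d_i$, $e_{4,i}=0$, $e_{2,i}=2n_i-1$ that I would establish separately. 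Hence for the rest of the argument I fix one irreducible block of real dimension $d$ and nilpotency class $n$.

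The central observation is that for an irreducible algebra the non-units are precisely the nilpotents, so $\mathcal{O}^c$ is the image of $\mathcal{N}$ under the linear bijection $X \mapsto X^T \xi_0$ of Theorem \ref{thm:ab_dil}(b), hence a genuine \emph{linear} subspace of $\mathbb{R}^d$. Using the norm-invariance established in Subsection \ref{subsect:norm_ind}, I would replace the euclidean norm by one adapted to the algebraic splitting $\mathcal{A} = \mathbb{K}\cdot 1 \oplus \mathcal{N}$. Writing $h = r \cdot 1 + N$ uniquely with $r \in \mathbb{K}^\times$ and $N \in \mathcal{N}$, the closest point of $\mathcal{O}^c$ to $h^T \xi_0$ is then the image of $N$ itself, and (\ref{eqn:A_noneuclid}) becomes directly computable, yielding the upper bound
\[
 A_H(h) \preceq \min \left( \frac{|r|}{1+\|N\|},\ \frac{1}{1+|r|+\|N\|} \right).
\]
Combined with the finite Neumann expansion $h^{-1} = \sum_{j=0}^{n-1} (-1)^j r^{-j-1} N^j$, which uses only the nilpotency of $r^{-1}N$, this supplies all the structure needed.

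From here the four estimates can be read off. Since $H$ is abelian, $\Delta_H \equiv 1$, so (\ref{eqn:dec_est_A2}) holds trivially with $e_4 = 0$. Since every eigenvalue of $h$ equals $r$, one has $|\det(h)| = |r|^d$, and both $|r|^{\pm d}$ are absorbed by a factor $A_H^d$ (via $A_H \preceq 1/(1+|r|)$ for $+d$ and $A_H \preceq |r|/(1+\|N\|)$ for $-d$), giving (\ref{eqn:dec_est_det}) with $e_3 = d$. The bound $\|h\| \preceq |r|+\|N\|$ yields (\ref{eqn:dec_est_norm}) for $\|h\|$ with any $e_2 \ge 1$. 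The substantive step is bounding $\|h^{-1}\|$ via the Neumann series: for each term $|r|^{-j-1}\|N\|^j$ I would split
\[
 A_H(h)^{e_2} \preceq \left( \frac{|r|}{1+\|N\|} \right)^{k_1} \left( \frac{1}{1+|r|+\|N\|} \right)^{k_2}
\]
with $k_1 + k_2 = e_2$, choosing the exponents separately for each $j$ so as to absorb the negative power of $|r|$ in the small-$|r|$ regime and the positive power of $\|N\|$ in the large-$\|N\|$ regime; a short case split on $|r| \le 1$ versus $|r| \ge 1$ confirms that $e_2 = 2n-1$ is amply sufficient. Invoking Proposition \ref{prop:crit_temp_embed} and Theorem \ref{thm:crit_str_temp_embed} then delivers both flavours of temperate embeddedness. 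The main obstacle is precisely this balancing step for $\|h^{-1}\|$: the Neumann series couples strongly negative powers of $|r|$ to increasing powers of $\|N\|$, while each of the two available bounds on $A_H$ degenerates in the opposite regime, so one must match the geometric-mean split of $A_H^{e_2}$ to each term of the series, with the worst case $j = n-1$ forcing the factor $n$ in the exponent.
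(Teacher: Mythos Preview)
Your proposal is correct and follows essentially the same route as the paper: reduction to a single irreducible block via Lemma~\ref{lem:prod_groups}, choice of a norm adapted to the splitting $\mathbb{K}\cdot 1 \oplus \mathcal{N}$ so that $A_H(h)\preceq \min\bigl(|r|/(1+\|N\|),\,1/(1+|r|+\|N\|)\bigr)$, the finite Neumann series for $h^{-1}$, and the observations $\Delta_H\equiv 1$ and $|\det h|=|r|^d$. The only difference is cosmetic: where you propose a term-by-term split of $A_H^{e_2}$ with a case distinction on $|r|$, the paper packages the Neumann series as $\|h^{-1}\|\preceq |r|^{-1}(1+\|N\|)^{n-1}(1+|r|^{-1})^{n-1}$ and reads off $e_2=2n-1$ directly, which is slightly cleaner but not conceptually different.
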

\begin{proof}
In view of (\ref{eqn:block_diagonal}) and Lemma \ref{lem:prod_groups}, we directly consider the case that the algebra $\mathcal{A}$ associated to $H$ is irreducible, i.e., 
\[
 \mathcal{A} = \mathbb{K} \cdot 1_\mathcal{A} + \mathcal{N}
\] with a suitable nilpotent algebra $\mathcal{N}$ over $\mathbb{K} \in \{ \mathbb{R}, \mathbb{C} \}$,
and with nilpotency class $n$. 
We endow $\mathcal{A}$ with any euclidean norm $|\cdot |_{\mathcal{A}}$ such that the direct sum $\mathbb{K} \cdot 1_{\mathcal{A}} + \mathcal{N}$ is orthogonal. We fix an isometric linear bijection $\psi: \mathcal{A} \to \mathbb{R}^d$. Then, by part (c) of Theorem \ref{thm:ab_dil}, $H^T$ is conjugate to the image of $\mathcal{A}$ under the regular representation $\rho$, and for $\xi = \psi(1_{\mathcal{A}})$ and $h = \rho(a)^T$, we find that $h^T \xi = \psi(a)$. 

These considerations show that we are allowed to work with the following identifications: $H = \mathbb{K}^\times + \mathcal{N}$, and for $h = r \cdot 1_{\mathcal{A}} + a \in H$, with $a \in \mathcal{N}$ and $r \in \mathbb{K}^\times$, we have
\[
 A_H(h) = \min \left( \frac{|r|}{1+|a|_{\mathcal{A}}}, \frac{1}{1+\sqrt{|r|^2+ |a|_{\mathcal{A}}^2}} \right)~.
\] Furthermore, the operator norm on the left-hand side of (\ref{eqn:dec_est_norm}) is equivalent to $|h|_{\mathcal{A}}
= \sqrt{|r|^2+ |a|^2_{\mathcal{A}}}$. Thus we immediately see that 
\[
 \| h \| A_H(h) \preceq 1~. 
\] Since $a^n=0$, the inverse of $h$ can be computed by a truncated Neumann series,
\[
 h^{-1} = r^{-1}\left(\sum_{j=0}^{n-1} (-1)^j r^{-j} a^j \right)~. 
\] Since the bilinear multiplication map $\mathcal{A} \times \mathcal{A} \to \mathcal{A}$ is bounded with respect to any choice of norms, we obtain, for a suitable constant $C>0$ and all $a,b \in \mathcal{A} : |ab|_{\mathcal{A}} \le C |a|_{\mathcal{A}} |b|_{\mathcal{A}}$. But this implies
\[
 \|h^{-1} \| \preceq |r|^{-1} \sum_{j=0}^{n-1} |r|^{-j} |a|_{\mathcal{A}}^{j} \le  |r|^{-1} (1+|a|)^{n-1} (1+|r|^{-1})^{n-1}~,
\]
which finally yields
\[
 \| h^{-1} \| A_H(h)^{2n-1} \preceq 1~.
\]
Thus we obtain the estimate (\ref{eqn:dec_est_norm}) with $e_2 = 2n-1$. Instead of invoking Lemma \ref{lem:dec_est_redundant} for the remaining inequalities, we may as well observe directly that being abelian, $H$ is  unimodular, and thus (\ref{eqn:dec_est_A2}) holds with $e_4=0$. Furthermore, for $h =  r \cdot 1_{\mathcal{A}} + a$, the fact that ${\rm det}({\rm id} + a) = 1$ for nilpotent matrices $a$ yields $|{\rm det}(h)| = |r|^{d}$, thus (\ref{eqn:dec_est_det}) holds with $e_3 = d$.
\end{proof}

\section{Generalized shearlet dilation groups}

\label{sect:gen_shearlets}

In this section, we consider a rather general class of shearlet groups, extending the examples studied in  \cite{DaStTe10,DaStTe12,CzaKi12,DaTe10,DaHaTe12}. Our approach provides a common framework for the treatment of all these groups, by establishing a previously unobserved connection to commutative associative algebras. It turns out that the additional algebraic structure is quite useful for the study of admissibility and vanishing moment conditions. 

\begin{definition}
Let $H < {\rm GL}(d,\mathbb{R})$ denote an irreducibly admissible dilation group. $H$ is called {\bf generalized shearlet dilation group}, if there exist two closed subgroups $S, D < H$ with the following properties:
\begin{enumerate}
\item[(i)] $S$ is a connected closed abelian subgroup of $T(d,\mathbb{R})$. 
\item[(ii)] $D = \{ \exp(r Y) : r \in \mathbb{R} \}$ is a one-parameter group, where $Y$ is a diagonal matrix. 
\item[(iii)] Every $h \in H$ can be written uniquely as $h = \pm d s$, with $d \in D$ and $s \in S$. 
\end{enumerate}
$S$ is called the {\bf shearing subgroup} of $H$, and $D$ is called the {\bf diagonal complement} of $S$.
\end{definition}
 
\begin{remark} \label{rem:ex_shearlet}
With this definition, one quickly realizes that in dimension two, there is only one possible candidate of a shearing subgroup. For higher dimensions, two distinct candidates for shearing subgroups have been considered so far:
\begin{enumerate}
\item[(i)] The following shearing group was studied, e.g., in \cite{DaStTe10,DaStTe12,CzaKi12,dahlke2014different}:
\[ S = \left\{ \left( \begin{array}{cccc} 1 & s_1 &\ldots & s_{d-1} \\ 
  & 1 &   &  \\  &  & \ddots & \\ &  & & 1 \end{array} \right) : s_1,\ldots,s_{d-1} \in \mathbb{R} \right\}~~.\]
\item[(ii)] The {\em Toeplitz shearing group} was proposed by Dahlke and Teschke in \cite{DaTe10} and further studied in \cite{DaHaTe12}; it is given by
\[ S = \left\{ \left( \begin{array}{cccccc} 1 & s_1 & s_2 & \ldots & \ldots & s_{d-1} \\ 
  & 1 & s_1 & s_2 &\ldots & s_{d-2} \\  &  & \ddots & \ddots & \ddots & \vdots \\ &  &  &\ddots & \ddots & s_2\\ & &  & & 1 & s_1 \\
 & & & & & 1 \end{array} \right) : s_1,\ldots,s_{d-1} \in \mathbb{R} \right\}˜˜.\]
 \end{enumerate} 
 For both types of groups, the basic facts of coorbit theory are already established, e.g. existence and well-definedness of coorbit spaces, as well as the existence of atomic decompositions in terms of bandlimited Schwartz wavelets. The existence of compactly supported atoms for the group in (i) is shown in \cite{DaStTe12}; for the Toeplitz shearlet group, this question has not yet been addressed, but will be settled below.  
\end{remark}
%
%
%

Shearing subgroups are closely related to abelian dilation groups, by the following observation:
\begin{proposition} \label{prop:char_shearsubs}
Let $S$ denote a closed, connected abelian subgroup of $T(d,\mathbb{R})$.  Then the following are equivalent:
\begin{enumerate}
\item[(i)] $S$ is the shearing subgroup of a generalized shearlet dilation group.
\item[(ii)] There is $\xi \in \mathbb{R}^d$ such that $S$ acts freely on $S^T \xi$ via the dual action, and in addition, ${\rm dim}(S) = d-1$. 
\item[(iii)] The matrix group $H = \{ rs: s \in S,  r \in \mathbb{R}^\times \}$ is an abelian irreducibly admissible dilation group. If we let $\mathfrak{h}$ and $\mathfrak{s}$ denote the Lie algebras of $H$ and $S$, respectively, then $\mathfrak{h}$ is an irreducible associative matrix Lie algebra over $\mathbb{R}$, and $\mathfrak{s}$ is the nilradical of $\mathfrak{h}$. 
\end{enumerate}
\end{proposition}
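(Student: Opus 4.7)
My plan is to establish the three equivalences cyclically, (i) $\Rightarrow$ (ii) $\Rightarrow$ (iii) $\Rightarrow$ (i). Two general tools will be used throughout: first, any connected unipotent matrix group is diffeomorphic to its Lie algebra via $\exp$, so contains no nontrivial compact subgroup; second, Theorem \ref{thm:ab_dil}, which identifies abelian irreducibly admissible dilation groups with unit groups of associative commutative matrix subalgebras of $\mathfrak{gl}(d,\mathbb{R})$ containing $I$.

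For (i) $\Rightarrow$ (ii), since $Y$ is diagonal the adjoint action of $D$ on $\mathfrak{s}$ has real eigenvalues, so the solvable connected group $H^{0} = D \cdot S$ carries no compact torus; consequently any compact subgroup of $H$ is contained in $H/H^{0} \subset \{\pm I\}$, and in particular the stabilizer $H_{\xi_0}$ at any $\xi_0$ in the open orbit is finite. This yields $\dim H = d$, hence $\dim S = d-1$. Further, $S \cap \{\pm I\} = \{I\}$ since $S$ is connected unipotent, so $S$ has trivial stabilizer at $\xi_0$; by abelianness, $S$ acts freely on all of $S^T \xi_0$.

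For (ii) $\Rightarrow$ (iii), set $H' = \mathbb{R}^{\times} \cdot S$ with Lie algebra $\mathfrak{h}' = \mathbb{R} I + \mathfrak{s}$ of dimension $d$. I first check that the map $\mathfrak{h}' \to \mathbb{R}^d$, $X \mapsto X^T \xi$, is bijective: nilpotent matrices have no nonzero fixed vector for the eigenvalue $1$, so $\xi \notin V := \mathfrak{s}^T \xi$, and this together with the free-action injectivity of $\mathfrak{s} \to V$ handles the kernel. The crucial step is to show $\mathfrak{s}$ is \emph{associative}, not merely abelian Lie. Let $\tilde{\mathfrak{s}} \supset \mathfrak{s}$ denote the associative subalgebra of $\mathfrak{gl}(d,\mathbb{R})$ generated by $\mathfrak{s}$; this is commutative (products of commuting matrices commute with everything) and lies in the strictly upper triangular matrices. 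For any $Z \in \tilde{\mathfrak{s}}$ with $Z^T \xi = 0$ and any $N \in \mathfrak{s}$, commutativity of $\tilde{\mathfrak{s}}$ gives
\[ Z^T N^T \xi = (NZ)^T \xi = (ZN)^T \xi = N^T Z^T \xi = 0, \]
so $Z^T$ annihilates $V$; combined with $Z^T \xi = 0$, it annihilates $\mathbb{R} \xi + V = \mathbb{R}^d$, forcing $Z = 0$. Thus $\mathbb{R} I + \tilde{\mathfrak{s}} \to \mathbb{R}^d$ is injective, and by dimension $\tilde{\mathfrak{s}} = \mathfrak{s}$. With $\mathfrak{s}$ now associative, $S = I + \mathfrak{s}$, $H' = (\mathfrak{h}')^{\times}$, and Theorem \ref{thm:ab_dil}(b) delivers that $H'$ is abelian irreducibly admissible. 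Irreducibility of $\mathfrak{h}'$ follows from $rI + N$ being nilpotent iff $r = 0$, which identifies $\mathfrak{s}$ as the nilradical.

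For (iii) $\Rightarrow$ (i), take $D = \exp(\mathbb{R} \cdot I) = \mathbb{R}^{+} \cdot I$, a one-parameter group with $Y = I$ diagonal. Every $h = rI + N \in H$ with $r \neq 0$, $N \in \mathfrak{s}$ has the unique decomposition
\[ h = \operatorname{sgn}(r) \cdot (|r|\, I) \cdot (I + N/r), \]
with $|r| I \in D$ and $I + N/r \in S$, exhibiting $S$ as the shearing subgroup of the generalized shearlet dilation group $H$. The hard part will be the associativity argument in (ii) $\Rightarrow$ (iii): the abelian Lie condition alone does not force closure under matrix products, and the proof must couple the commutativity of the associative closure $\tilde{\mathfrak{s}}$ with the surjectivity $\mathfrak{s}^{T}\xi = V$ to transport $Z^T \xi = 0$ into $Z^T = 0$.
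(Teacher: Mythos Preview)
Your proof is correct. Steps (i) $\Rightarrow$ (ii) and (iii) $\Rightarrow$ (i) match the paper's, with yours slightly more explicit. The interesting divergence is in (ii) $\Rightarrow$ (iii). The paper first shows that $H = \mathbb{R}^\times S$ has an open dual orbit (after shifting $\xi$ so its first coordinate is nonzero), then invokes \cite[Lemma~6]{Fu_abelian} to conclude that stabilizers on open orbits of abelian groups are trivial, hence $H$ is irreducibly admissible; only then does Theorem~\ref{thm:ab_dil} in the direction (a) $\Rightarrow$ (b) deliver the associative algebra structure on $\mathfrak{h}$, from which $\mathfrak{s}$ is identified as the nilradical. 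You instead run Theorem~\ref{thm:ab_dil} in the opposite direction: you directly establish that $\mathfrak{s}$ is closed under matrix products, via the observation that the associative closure $\tilde{\mathfrak{s}}$ still injects into $\mathbb{R}^d$ under $Z \mapsto Z^T\xi$ (since $Z^T\xi = 0$ propagates to $Z^T(\mathfrak{s}^T\xi) = 0$ by commutativity of $\tilde{\mathfrak{s}}$, and $\mathbb{R}\xi + \mathfrak{s}^T\xi = \mathbb{R}^d$), forcing $\tilde{\mathfrak{s}} = \mathfrak{s}$ by a dimension count; then (b) $\Rightarrow$ (a) gives admissibility. Your route is self-contained and avoids the external citation, at the cost of the extra associativity argument; the paper's is shorter but outsources the key step. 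Two minor phrasing issues: in (i) $\Rightarrow$ (ii), ``contained in $H/H^0$'' should read ``maps injectively into $H/H^0$'', and ``fixed vector for the eigenvalue $1$'' should read ``eigenvector with nonzero eigenvalue''; neither affects the argument.
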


\begin{proof}
 For the implication (i) $\Rightarrow$ (ii), observe that if $S$ is the shearing subgroup of some shearlet dilation group $H$, the action of $H$ must have a free open dual orbit. To see this, we first note that there exists an open orbit with associated compact stabilizers. We next observe that the only nontrivial compact subgroup of $H$ is $\{ \pm {\rm id}_{\mathbb{R}^d} \}$: If $h \in H$ has a diagonal entry $\alpha$ with $|\alpha| \not= 1$, $h^n$ will have $\alpha^n$ as corresponding entry, for all $n \in \mathbb{Z}$, which shows that any subgroup containing $h$ will be non-compact. Thus any compact subgroup of $H$ is necessarily a compact subgroup of $S \cup (- S)$. But it is well-known that $\exp : \mathfrak{t} \to T(d,\mathbb{R})$ is bijective, and thus the same holds true for $\exp : \mathfrak{s} \to S$. In particular, since $S$ is abelian, it follows that $S$ is isomorphic to (the additive group of) a vector space, and thus has no nontrivial compact subgroups. 
 
 Thus the only available candidate for a nontrivial compact stabilizer is $\{ \pm  {\rm id}_{\mathbb{R}^d} \}$, but this is only contained in the stabilizer of the zero vector.  This shows that the action of $H$ on the open dual orbit is in fact free. 
 In particular, the dimension of $H$ is $d$, and since $S$ is of codimension one in $H$, we get ${\rm dim}(S) = d-1$. 

For the implication (ii) $\Rightarrow$ (iii), we note that if $S$ is of dimension $d-1$, and acts freely on $S^T \xi$, it follows that $S^T\xi$ has dimension $d-1$. Since the transposed action of $S$ leaves the first entry of each vector invariant, it follows that the $S^T(\xi + (t,0,\ldots,0)^T)$ has dimension $d-1$ as well, for any $t \in \mathbb{R}$. We may therefore in addition assume that the first entry of $\xi$ is nonzero. But then the dual orbit  $\left( \mathbb{R}^\times \times S \right)^T \xi$ must have dimension $d$, thus it is open. But for open orbits under an abelian matrix group the associated stabilizers are trivial, by \cite[Lemma 6]{Fu_abelian}, which implies that $H$ is irreducibly admissible.
Thus $H = \mathfrak{h}^\times$, where $\mathfrak{h}$ denotes the Lie algebra of $H$, which in this setting is also an associative algebra. If $\mathfrak{s}$ denotes the Lie algebra of $S$, then $\mathfrak{s} \subset \mathfrak{h}$ is a codimension one subspace consisting of nilpotent matrices, thus necessarily the nilradical. But then $H$ is irreducible over $\mathbb{R}$. 

For (iii) $\Rightarrow$ (i) we take the multiples of the identity as the diagonal complement to $S$.
\end{proof}

\begin{remark}
 As a consequence of the above proposition, we obtain a handy description of the image of the exponential map for shearing groups: If $S$ is a shearing subgroup with Lie algebra $\mathfrak{s}$, then we have
 \[
  \exp(\mathfrak{s}) = {\rm id}_{\mathbb{R}^d} + \mathfrak{s} = \{  {\rm id}_{\mathbb{R}^d} + X: X \in \mathfrak{s} \}~. 
 \] To prove the equality, denote the right-hand side by $S_0$. Then $S_0$ is a group: Closedness under products is clear, and closedness of the inverse is again seen by noting that the Neumann series breaks off after $d$ terms, and yields an element of ${\rm id}_{\mathbb{R}^d} + \mathfrak{s}$, since $\mathfrak{s}$ is an associative subalgebra.. Clearly, $S_0$ is a closed connected matrix group of dimension $d-1$, and in addition, we have $\exp(\mathfrak{s}) \subset S_0$, as a $d-1$-dimensional Lie-subgroup. But then equality follows. 
\end{remark}

The next result characterizes the one-parameter groups which may be used as complement to a shearing group, and provides an explicit description of the open dual orbit. 
\begin{proposition}
Let $S< {\rm GL}(d,\mathbb{R})$ denote a closed connected abelian group of upper triangular matrices of dimension $d-1$, and assume that there exists $\xi \in \mathbb{R}^d$ such that the dual stabilizer of $\xi$ in $S$ is trivial. Let $\mathfrak{s}$ denote the Lie algebra of $S$. Let $Y$ denote a nonzero diagonal matrix, and let $D := {\rm exp}(\mathbb{R}Y)$ the associated one-parameter group with infinitesimal generator $Y$. Then the following are equivalent:
\begin{enumerate}
\item[(i)] $H = DS \cup (-DS)$ is a shearlet dilation group.
\item[(ii)] For all $X \in \mathfrak{s}$ : $[X,Y] = XY-YX \in \mathfrak{s}$, and in addition the first diagonal entry of $Y$ is nonzero.
\end{enumerate}
For any choice of $Y$, $H^T$ acts freely on the open dual orbit $\mathcal{O} = \mathbb{R}^\times \times \mathbb{R}^{d-1}$. 
\end{proposition}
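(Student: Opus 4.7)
My plan is to prove the equivalence (i)$\Leftrightarrow$(ii) first, treating the commutator condition and the condition $y_1\ne 0$ separately, and then to derive the concluding statement about the open orbit and freeness, which will simultaneously supply the missing irreducible-admissibility part for (ii)$\Rightarrow$(i).

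For the commutator condition I note that, since $H=DS\cup(-DS)$ with $D\cap S=\{{\rm id}\}$ (using $Y\ne 0$ and unit diagonal of $S$), the Lie algebra of $H$ is the direct sum $\mathfrak{h}=\mathbb{R}Y\oplus\mathfrak{s}$. In the direction (i)$\Rightarrow$(ii), closure of $\mathfrak{h}$ under the bracket together with the entrywise identity $[Y,X]_{ij}=(y_i-y_j)X_{ij}$ shows that $[Y,X]$ is strictly upper triangular for $X\in\mathfrak{s}$, so its $\mathbb{R}Y$-component vanishes and $[Y,\mathfrak{s}]\subset\mathfrak{s}$; conversely, this bracket relation exponentiates to $D$ normalizing $S$, making $DS$ a closed subgroup, and adjoining the central element $-{\rm id}$ yields $H$ as a closed subgroup. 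For the condition $y_1\ne 0$, I fix $\xi$ with $\xi_1\ne 0$ and trivial $S$-stabilizer (allowable by the argument appearing in the proof of Proposition~\ref{prop:char_shearsubs}); admissibility forces the tangent space $\mathfrak{h}^T\xi=\mathbb{R}(Y^T\xi)+\mathfrak{s}^T\xi$ to span $\mathbb{R}^d$, but $\mathfrak{s}^T\xi$ already exhausts $\{\zeta\in\mathbb{R}^d:\zeta_1=0\}$ (it lies there by strict upper triangularity of $\mathfrak{s}$ and has the full dimension $d-1$ by freeness), so the condition reduces to $(Y^T\xi)_1=y_1\xi_1\ne 0$, equivalently $y_1\ne 0$. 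Uniqueness of the decomposition $h=\pm d\,s$ in condition (iii) of the definition then follows by reading the $(1,1)$-entry of $\epsilon\exp(rY)s$ as $\epsilon e^{ry_1}$, which determines $\epsilon$ and $r$ when $y_1\ne 0$, after which $s$ is determined by the group law.

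For the open orbit and freeness, I combine the remark preceding the proposition ($S={\rm id}+\mathfrak{s}$) with the bijection $\mathfrak{s}\ni X\mapsto X^T\xi\in\{\zeta:\zeta_1=0\}$ coming from Theorem~\ref{thm:ab_dil}(b) applied to the abelian group $\mathbb{R}^\times S$; this yields $S^T\xi=\{\zeta:\zeta_1=\xi_1\}$. Using the normalizing property $d_rXd_r^{-1}\in\mathfrak{s}$ to transport the bijection to the basepoint $d_r\xi$, I obtain $\{s^T d_r\xi:s\in S\}=\{\eta:\eta_1=e^{ry_1}\xi_1\}$; taking the union over $r\in\mathbb{R}$ sweeps out one sign of $\eta_1$ (using $y_1\ne 0$), and adjoining the factor $\pm 1$ produces $H^T\xi=\{\eta\in\mathbb{R}^d:\eta_1\ne 0\}$, which is the stated $\mathbb{R}^\times\times\mathbb{R}^{d-1}$. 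Freeness at $\xi$ follows from $h^T\xi=\xi$ with $h=\epsilon\exp(rY)s$: the $(1,1)$-entry forces $\epsilon=1$ and $r=0$, and then the hypothesis on $S$ forces $s={\rm id}$; freeness at every point of $\mathcal{O}$ then follows because stabilizers along a transitive action are mutually conjugate. The main obstacle I anticipate is the careful transport of the bijection $X\mapsto X^T\xi$ from the basepoint $\xi$ to arbitrary $d_r\xi$, which requires both the normalizing property from (ii) and the diagonal nature of $D$ to move matrix factors past one another in the correct order.
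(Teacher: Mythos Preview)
Your overall strategy is sound and largely succeeds, but there is one small gap and one genuinely different route worth noting.

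\textbf{The gap.} In your argument for $y_1\neq 0$ in the direction (i)$\Rightarrow$(ii), you fix a $\xi$ with $\xi_1\neq 0$ and trivial $S$-stabilizer (imported from the hypothesis via the shift trick of Proposition~\ref{prop:char_shearsubs}), and then assert that ``admissibility forces $\mathfrak{h}^T\xi$ to span $\mathbb{R}^d$.'' Admissibility only guarantees that $\mathfrak{h}^T\xi'$ spans for $\xi'$ in the open $H$-orbit; you have not shown that your particular $\xi$ lies there. Trying to deduce this from trivial $H$-stabilizer is circular, since reading off $r=0$ from the first coordinate already uses $y_1\neq 0$. The fix is to drop the extra conditions on $\xi$ for this step and simply choose $\xi'$ in the open $H$-orbit: one always has $\mathfrak{s}^T\xi'\subset\{0\}\times\mathbb{R}^{d-1}$ by strict upper triangularity, so $\mathfrak{h}^T\xi'=\mathbb{R}^d$ forces $(Y^T\xi')_1=y_1\xi'_1\neq 0$, whence $y_1\neq 0$. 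This is exactly how the paper handles it.

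\textbf{Comparison of approaches.} For the orbit computation in (ii)$\Rightarrow$(i), the paper takes a more structural detour: it realizes $H_a=\mathbb{R}^\times S$ via the regular representation of an irreducible commutative algebra $\mathcal{A}$, identifies $\psi(\mathcal{N})=\{0\}\times\mathbb{R}^{d-1}$, and proves the key equivalence $\xi\in\mathbb{R}^\times\times\mathbb{R}^{d-1}\Leftrightarrow(X\mapsto X^T\xi$ injective on $\mathfrak{s})$ by exploiting multiplicative nilpotence in $\mathcal{A}$. Openness of the $H$-orbit then follows by a tangent-space argument at each point, and the orbit is identified with $\mathbb{R}^\times\times\mathbb{R}^{d-1}$ via a connectedness count. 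Your route is more direct and computational: from $S={\rm id}+\mathfrak{s}$ and the bijection $\mathfrak{s}\to\{\zeta_1=0\}$, $X\mapsto X^T\xi$, you get $S^T\xi=\{\zeta:\zeta_1=\xi_1\}$ immediately, then use $D$-normality of $S$ together with diagonality of $d_r$ to write $S^T(d_r\xi)=d_r(S^T\xi)=\{\eta:\eta_1=e^{ry_1}\xi_1\}$, and sweep out $\mathbb{R}^\times\times\mathbb{R}^{d-1}$ by varying $r$ and the sign. This avoids the explicit algebra identification and the injectivity characterization entirely, at the cost of being tied to the chosen basepoint; the paper's characterization, by contrast, yields as a by-product the stronger statement that \emph{no} point outside $\mathbb{R}^\times\times\mathbb{R}^{d-1}$ can have open $H$-orbit. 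Your freeness argument and the paper's are essentially the same.
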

\begin{proof}
For the proof of (i) $\Rightarrow$ (ii), we note that by assumption, $H$ is a group, which implies in particular: ${\rm exp}(tY) s {\rm exp}(-tY) \in H$, for all $s \in S$ and $t \in \mathbb{R}$. 
Since $Y$ is diagonal and $s$ upper triangular, the diagonal entries of ${\rm exp}(tY) s {\rm exp}(-tY)$ are identically one, thus in fact ${\rm exp}(tY) s {\rm exp}(-tY) \in S$. Hence $D$ normalizes $S$, which implies $[Y,\mathfrak{s}] \subset \mathfrak{s}$ by differentiation. 

For the second condition on $Y$, we pick $\xi \in \mathbb{R}^d$ with the property that $H^T \xi$ is open (which exists by assumption). By \cite[Lemma 2]{Fu_abelian}, it follows that the map
\[
 \mathbb{R} \cdot Y + \mathfrak{s} \ni r Y + X \mapsto (rY+X)^T \xi
\] must be of rank $d$, hence bijective. Since $\mathfrak{s}$ consists of properly upper triangular matrices, it follows that
$\mathfrak{s}^T \xi \subset \{ 0 \} \times \mathbb{R}^{d-1}$. Assuming, in addition, that the first diagonal entry of $Y$ is zero, we get $(\mathbb{R} \cdot Y + \mathfrak{s})^T \xi \subset \{ 0 \} \times \mathbb{R}^{d-1}$, and the orbit of $\xi$ cannot be open. 

Conversely, assume (ii). Then $D$ normalizes $S$ and thus $DS$ is a subgroup of ${\rm GL}(d,\mathbb{R})$. One easily verifies that $DS$ is closed, and thus $H$ is closed. 

Our next aim is to show that $H$ is admissible, with open free dual orbit given by $\mathcal{O} = \mathbb{R}^\times \times \mathbb{R}^{d-1}$.  For this purpose we consider the associated abelian admissible matrix group $H_a = \mathbb{R}^\times S$ provided by Proposition \ref{prop:char_shearsubs} (ii) $\Leftrightarrow$ (iii). 

By Theorem \ref{thm:ab_dil} (i) $\Leftrightarrow$ (iii), there exists a $d$-dimensional commutative algebra $\mathcal{A}$ with unity and a linear isomorphism $\psi: \mathcal{A} \to \mathbb{R}^d$ such that 
\[ H_a = \{ h : \exists a \in \mathcal{A}^\times \mbox{ such that } \forall \xi \in \mathbb{R}^d,  h^T \xi = \psi(a \psi^{-1} (\xi)) \}. \]
Since $\mathcal{A}$ is abelian, the map $\rho: \mathcal{A} \ni a \mapsto (\xi \mapsto \psi(a \psi^{-1} (\xi)) )^T$ is an isomorphism $\mathcal{A} \to \mathfrak{h}_a$ of associative algebras: By choice, it is a group isomorphism $\mathcal{A}^\times \to \mathfrak{h}_a^\times = H_a$, and since the unit groups are open subsets of the respective algebras, they span them. But then $\rho$ must be at least onto, hence it is bijective for dimension reasons. In particular, $\mathcal{A}$ is an irreducible algebra, and its nilradical $\mathcal{N}$ has real codimension one in $\mathcal{A}$, and is mapped under $\rho$ bijectively onto $\mathfrak{s}$. 

For any $\xi = \psi(a) \in \mathbb{R}^d$, we have 
\[
 H_a^T \xi = \rho(\mathcal{A}^\times \psi(a))= \psi(\mathcal{A}^\times a)~,
\] showing that there is a unique open dual orbit of $H_a$, namely  $\psi(\mathcal{A}^\times)$. Furthermore, the action of $H_a$ on this orbit is free.  

We next show that $\psi(\mathcal{N}) = \{ 0 \} \times \mathbb{R}^{d-1}$. For this purpose, we note for any $a \in \mathcal{N}$ that $\psi(a) = \rho(a)^T \psi(1_\mathcal{A})$, and $\rho(a) \in \mathfrak{s}$ is a proper upper triangular matrix. Thus $\psi(\mathcal{N}) \subset \{ 0 \} \times \mathbb{R}^{d-1}$, and since $\psi$ is one-to-one and $\mathfrak{s}$ has dimension $d-1$, we obtain equality. 

It follows that $\psi(1_{\mathcal{A}}) \not\in  \{ 0 \} \times \mathbb{R}^{d-1}$, since otherwise $\psi$ could not be onto. Using that 
\[ \mathcal{A}^\times = \mathbb{R}^\times \cdot 1_\mathcal{A}+ \mathcal{N} ~,\]
this finally gives $\psi(\mathcal{A}^\times) = \mathbb{R}^\times \times  \mathbb{R}^{d-1}$. 

Before we turn to the group $H$, we need a slight refinement of this observation, namely
\begin{equation} \label{char:free_act}
 \xi \in \mathbb{R}^\times \times  \mathbb{R}^{d-1} \Leftrightarrow \mathfrak{s} \ni X \mapsto X^T \xi \mbox{ is injective. }
\end{equation}
Given $X \in \mathfrak{s}$ and $\xi \in \mathbb{R}^d$, we can write $X = \rho(a)$ and $\xi = \psi(b)$, with $a \in \mathcal{N}$ and $b \in \mathcal{A}$, and thus $X^T \xi = \rho(a)^T \psi(b) = \psi(a b) = 0$ iff $ab=0$.
Since $\xi \in \mathbb{R}^\times \times \mathbb{R}^d$ iff $b \in \mathcal{A}^\times$, this immediately settles ``$\Rightarrow$''. For the converse, assume that $\xi \not\in \mathbb{R}^\times \times  \mathbb{R}^{d-1}$, i.e., $b \not\in \mathcal{A}^\times$. If $b=0$, then $ab=0$ holds for all $a \in \mathcal{A}$. In the other case $b$ is nonzero and nilpotent, hence $a = b^j \not=0 \in \mathcal{N}$ exists with $ab = b^{j+1} = 0$, implying for $X = \rho(a) \not= 0$ that $X^T \xi = 0$. 

We can now finally consider the dual action of $H$. We start out by noting that $H^T \xi$ is open iff $\xi \in \mathbb{R}^\times \times  \mathbb{R}^{d-1}$. To see this, we again apply \cite[Lemma 2]{Fu_abelian} and consider the associated linear map
\[
 \mathbb{R} \times \mathfrak{s} \ni (r, X) \mapsto (rY  + X)^T \xi~,
\] which needs to be onto, thus bijective. By (\ref{char:free_act}), already the restriction to $\mathfrak{s}$ is only injective if $\xi \in \mathbb{R}^\times \times  \mathbb{R}^{d-1}$, showing the ``only-if'' part. For the other direction, we note that again by (\ref{char:free_act}), we know that the restriction to $\mathfrak{s}$ is one-to-one if $\xi \in \mathbb{R}^\times \times  \mathbb{R}^{d-1}$. In addition, the fact that $\mathfrak{s}$ consists of strictly upper triangular matrices implies that $\mathfrak{s}^T \xi \subset \{ 0 \} \times \mathbb{R}^{d-1}$, which then has to be an equality for dimension reasons. On the other hand, our assumptions on $Y$ and $\xi$ imply that the first entry of $Y^T \xi$ is nonzero, thus 
$Y^T \xi$ and $\mathfrak{s}^T \xi$ span all of $\mathbb{R}^d$. 

Now the arguments established so far show that the $DS$-orbit of any $\xi \in  \mathbb{R}^\times \times \mathbb{R}^{d-1}$ is open. Therefore, each of the two connected components of $\mathbb{R}^\times \times \mathbb{R}^{d-1}$ can contain at most one $DS$-orbit, thus must coincide with it. Since $H$ contains $\pm {\rm id}_{\mathbb{R}^d}$, it follows that $\mathbb{R}^\times \times \mathbb{R}^{d-1}$ is the unique open $H^T$-orbit.

Finally, we note that the action on the open orbit is free. If $({\rm exp}(rY) s)^T (\xi) = \xi$, for $\xi = (1,0,\ldots,0)^T$, $r \in \mathbb{R}^\times$ and $s \in S$, a comparison of the first entries of both sides, together with the fact that the first entry of $Y$ is nonzero, implies $r=0$, and thus $s^T \xi = \xi$. But we established that $S$ acts freely on $\mathbb{R}^\times \times \mathbb{R}^{d-1}$ (even the larger group $H_a$ does), and thus it follows that the stabilizer of $\xi$ is trivial. 
\end{proof}

\begin{remark} \label{rem:alg_sheardil}
 It is important to note that the correspondence between abelian shearlet dilation groups (modulo conjugacy) and {\em irreducible} commutative algebras over $\mathbb{R}$ (modulo isomorphisms) is again a bijection. More precisely, given any irreducible commutative algebra $\mathcal{A} = \mathbb{R} \cdot 1_\mathcal{A} + \mathcal{N}$ with nilradical $\mathcal{N}$, one can always choose a basis $Y_1,\ldots, Y_d$ with $Y_1 = 1_{\mathcal{A}}$ and such that, for all $k \in \{1,\ldots, d \}$, the subspace ${\rm span} \{ Y_k,Y_{k+1},\ldots, Y_d \}$ is an ideal in $\mathcal{A}$. Define a linear isomorphism $\psi : \mathcal{A} \to \mathbb{R}^d$ by $\psi(Y_i) = e_i$, the $i$th vector of the canonical basis of $\mathbb{R}^d$, and let $\rho: \mathcal{A} \to gl(\mathbb{R},d)$ denote the regular representation. Then the above-mentioned ideal property yields that
 $\rho(A)$ consists of lower triangular matrices, and thus $H = \rho(\mathcal{A}^\times )^T$ is an abelian shearlet dilation group. 
\end{remark}

As a consequence of the characterization of the open dual orbit associated to a shearing subgroup, we obtain two alternative characterizations of possibly independent interest. 
\begin{corollary}  \label{cor:char_shearsubs}
 Let $S< T(\mathbb{R},d)$ denote a closed connected abelian subgroup, with Lie algebra $\mathfrak{s}$. Then the following are equivalent:
 \begin{enumerate}
  \item[(a)] $S$ is a shearing subgroup.
  \item[(b)] ${\rm dim}(S) = d-1$, and the linear map sending each $X \in \mathfrak{s}$ to its first row is one-to-one.
  \item[(c)] There is a basis $X_2,\ldots,X_{d}$ of $\mathfrak{s}$ such that the first row of $X_i$ is the $i$th canonical basis vector of $d$-dimensional row space.
 \end{enumerate}
\end{corollary}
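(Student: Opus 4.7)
The plan is to derive (b) $\Leftrightarrow$ (c) from a quick dimension count, and to pass between (a) and (b) via the free $S$-action at $e_1 = (1, 0, \ldots, 0)^T$, invoking the previous two propositions on one side and an elementary triangularity argument on the other. Throughout I would denote by $\phi: \mathfrak{s} \to \mathbb{R}^d$ the linear map sending a properly upper-triangular matrix to its first row (viewed as a column vector). Since $X_{11}=0$ for $X \in \mathfrak{s}$, one has $\phi(X) = X^T e_1$ and $\phi(\mathfrak{s}) \subset \{0\} \times \mathbb{R}^{d-1}$.

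For (b) $\Leftrightarrow$ (c), note that injectivity of $\phi$ on a $(d-1)$-dimensional domain whose image lies in the $(d-1)$-dimensional subspace $\{0\} \times \mathbb{R}^{d-1}$ makes $\phi$ a linear isomorphism onto that subspace; setting $X_i := \phi^{-1}(e_i)$ for $i=2,\ldots,d$ then produces the basis of (c). Conversely, the basis in (c) forces $\dim\mathfrak{s}=d-1$ and sends basis elements to the linearly independent family $e_2,\ldots,e_d$, so $\phi$ is injective.

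For (a) $\Rightarrow$ (b), I would invoke Proposition \ref{prop:char_shearsubs} to get $\dim S=d-1$ and that $H_a := \mathbb{R}^\times S$ is abelian irreducibly admissible. Applying the subsequent proposition with the diagonal generator $Y = \mathrm{id}_{\mathbb{R}^d}$ (whose bracket with any $X \in \mathfrak{s}$ vanishes, and whose first diagonal entry is nonzero), so that $D = \mathbb{R}_{>0}\cdot\mathrm{id}$ and $H = H_a$, identifies the open dual orbit as $\mathbb{R}^\times \times \mathbb{R}^{d-1}$ with free $H^T$-action. In particular, the $S$-stabilizer of $e_1$ is trivial, and since its Lie algebra is exactly $\ker\phi$, we obtain (b).

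The main step is (b) $\Rightarrow$ (a). By Proposition \ref{prop:char_shearsubs} it suffices to exhibit a point with trivial $S$-stabilizer, and $e_1$ will be the natural candidate. Let $\exp(X) \in S_{e_1}$ with $X \in \mathfrak{s}$; writing $\exp(X) = \sum_{k=0}^{d-1} X^k/k!$, the condition that its first row equals $e_1^T$ gives, for each $j \in \{2,\ldots,d\}$, an equation $X_{1,j} + \sum_{k\ge 2}(X^k)_{1,j}/k! = 0$. Since $(X^k)_{1,j}$ is a sum over strictly increasing chains $1 < i_1 < \cdots < i_{k-1} < j$, each higher-order term is polynomial in the entries $X_{1,i}$ with $2 \le i < j$. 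A straightforward induction on $j$ then yields $X_{1,j}=0$ for all $j$, i.e.~$\phi(X)=0$, whence $X=0$ by (b). This triangular induction is the only nonroutine step; the remainder is bookkeeping with the previous two propositions.
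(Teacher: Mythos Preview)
Your proof is correct and follows essentially the same strategy as the paper: both reduce (a) to Proposition~\ref{prop:char_shearsubs}(ii) via freeness of the $S$-action at $e_1$, and both obtain (a)$\Rightarrow$(b) by invoking the proposition on diagonal complements. The paper proceeds cyclically ((a)$\Rightarrow$(b)$\Rightarrow$(c)$\Rightarrow$(a)) and simply asserts that (c) forces $S$ to act freely on the orbit of $e_1$; your triangular induction on the first-row entries of $\exp(X)$ is precisely the missing justification for that assertion, so you have in fact filled in a detail the paper glosses over. One small stylistic point: in your (a)$\Rightarrow$(b) step, rather than passing through the full free-action conclusion and then backing out $\ker\phi=0$ via the Lie algebra of the stabilizer, you could cite directly the intermediate equivalence established inside the proof of the preceding proposition (namely that $X\mapsto X^T\xi$ is injective on $\mathfrak{s}$ for $\xi\in\mathbb{R}^\times\times\mathbb{R}^{d-1}$), which is how the paper does it; but your route is equally valid.
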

\begin{proof}
 (a) $\Rightarrow$ (b) follows from the proof of the previous proposition, since the map $X \mapsto X^T (1,0,\ldots,0)^T$ must be injective. (b) $\Rightarrow$ (c) is elementary linear algebra, and (c) $\Rightarrow$ (a) follows from the fact that assumption (c) implies that $S$ acts freely on the orbit of $(1,\ldots,0)$, and Proposition \ref{prop:char_shearsubs}.
\end{proof}

\begin{remark} \label{rem:shearlets}
Note the remarkable fact that the dual open orbit is independent of the precise choice of the shearlet group; the associated differential operator is given by ${\rm D}_{\mathcal{O}} = \frac{{\rm d}}{{\rm d}x_1}$.  As the next result shows, this independence extends to the admissibility condition.

The previous propositions suggest a procedure for the construction of shearlet dilation groups from commutative algebras. The starting point is a nilpotent algebra $\mathcal{N}$ of dimension $d-1$, which gives rise to the shearing subgroup and its Lie algebra. The second step then consists of determining the infinitesimal generators $Y$ of the diagonal complement. Since the associated one-parameter subgroups of $Y$ and $rY$ coincide, for any nonzero scalar $r$, we may normalize $Y$ to have $1$ as first diagonal entry. 
\end{remark} 

We next formulate the admissibility condition for the associated wavelet transform. Rather remarkably, this condition is the same for all shearlet dilation groups in a fixed dimension. 

\begin{theorem} \label{thm: shear_adm_cond}
Let $H < {\rm GL}(\mathbb{R}^d)$ denote a generalized shearlet dilation group. Then $\psi \in {\rm L}^2(\mathbb{R}^d)$ is admissible iff
\[
 \int_{\mathbb{R}^d} \frac{|\widehat{\psi}(\xi)|^2}{|\xi_1|^d} d \xi < \infty~. 
\]
\end{theorem}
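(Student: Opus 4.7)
The plan is to reduce the admissibility condition to a Plancherel-type integral on $\mathcal{O}$ and then to compute the resulting weight explicitly using the structure of generalized shearlet groups. First, the standard Fourier-side computation of the wavelet transform gives
\[
\|\mathcal{W}_\psi f\|_{{\rm L}^2(G)}^2 = \int_{\mathbb{R}^d} |\widehat{f}(\xi)|^2 K_\psi(\xi)\, d\xi, \qquad K_\psi(\xi) := \int_H |\widehat{\psi}(h^T\xi)|^2\, dh.
\]
Left-invariance of Haar measure on $H$ together with transitivity of the dual action on $\mathcal{O}$ forces $K_\psi$ to be constant a.e.\ on $\mathcal{O}$, so $\psi$ is admissible iff $K_\psi(\xi_0) < \infty$ for $\xi_0 = e_1 \in \mathcal{O}$. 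Applying (\ref{eqn:meas_orbit_Haar}) then rewrites this as
\[
K_\psi(\xi_0) = c_0 \int_{\mathcal{O}} |\widehat{\psi}(\eta)|^2\, \frac{\Delta_H(h(\eta))}{|\det h(\eta)|}\, d\eta,
\]
where $h(\eta) \in H$ is (up to sign) the unique element with $h(\eta)^T e_1 = \eta$.

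The second step is to identify the weight $\Delta_H(h)/|\det h|$ with $|\eta_1|^{-d}$. Parametrizing $h = \pm\exp(tY)\exp(X)$ with $X \in \mathfrak{s}$, three routine computations are needed: (i) since $X^T$ is strictly lower triangular and $Y$ is diagonal, the first entry of $h^T e_1$ equals $\pm e^{t y_1}$, so $|\eta_1| = e^{t y_1}$; (ii) $|\det h| = e^{tT}$ with $T := \operatorname{tr}(Y)$; and (iii) via Folland's formula $\Delta_H(h) = |\det \operatorname{Ad}(h)|^{-1}$, using that $\operatorname{ad}(X)$ is nilpotent on $\mathfrak{h} = \mathbb{R}Y \oplus \mathfrak{s}$ (so $\operatorname{Ad}(\exp X)$ has determinant one), one obtains $\Delta_H(h) = e^{-t\sigma}$ with $\sigma := \operatorname{tr}(\operatorname{ad}(Y)|_{\mathfrak{s}})$. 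Thus $\Delta_H(h)/|\det h| = e^{-t(\sigma + T)}$, and everything comes down to the exponents matching: $\sigma + T = d\, y_1$.

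This algebraic identity is the decisive, and only non-routine, step. To prove it, I would invoke Corollary \ref{cor:char_shearsubs}(c) to fix a basis $X_2, \ldots, X_d$ of $\mathfrak{s}$ whose $i$-th member has first row equal to $e_i^T$. A direct entry computation, using that $Y$ is diagonal, shows that the first row of $[Y, X_i] = Y X_i - X_i Y$ is $(y_1 - y_i) e_i^T$. Since $\mathfrak{s}$ is an ideal in $\mathfrak{h}$ (because $[Y, \mathfrak{s}] \subset \mathfrak{s}$ and $\mathfrak{s}$ is abelian), both $[Y, X_i]$ and $(y_1 - y_i) X_i$ belong to $\mathfrak{s}$ and share the same first row; by the injectivity supplied in Corollary \ref{cor:char_shearsubs}(b), they must coincide. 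Hence each $X_i$ is an eigenvector of $\operatorname{ad}(Y)|_{\mathfrak{s}}$ with eigenvalue $y_1 - y_i$, yielding $\sigma = (d-1) y_1 - \sum_{i=2}^d y_i$. Combined with $T = y_1 + \sum_{i \ge 2} y_i$ this gives $\sigma + T = d\, y_1$.

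Putting everything together, $\Delta_H(h(\eta))/|\det h(\eta)| = e^{-t(\sigma+T)} = |\eta_1|^{-d}$. Since $\mathcal{O}^c = \{0\} \times \mathbb{R}^{d-1}$ has Lebesgue measure zero, the integral over $\mathcal{O}$ equals the integral over $\mathbb{R}^d$, and the stated equivalence follows. The principal obstacle is the identity $\sigma + T = d\, y_1$; without the first-row basis from Corollary \ref{cor:char_shearsubs} the individual eigenvalues of $\operatorname{ad}(Y)|_{\mathfrak{s}}$ are not directly accessible, so the reduction to first-row data is the key observation and explains why the admissibility weight is independent of the particular shearlet group.
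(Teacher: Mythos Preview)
Your proposal is correct and follows essentially the same route as the paper's proof. Both arguments reduce the admissibility condition to identifying the weight $\Delta_H(h)/|\det h|$ on the orbit, and both compute this weight by exploiting the first-row coordinate map on $\mathfrak{s}$ from Corollary~\ref{cor:char_shearsubs} to read off the trace of the adjoint (equivalently, conjugation) action of the diagonal part; the paper simply cites \cite{Fu96} for the general admissibility formula and normalizes $y_1=1$ from the outset, whereas you re-derive the formula via the Fourier-side computation and keep $y_1$ general, but these are cosmetic differences.
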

\begin{proof}
 We employ the recipe described in \cite{BeTa,Fu96}; note that in the current setting, the dual orbit is free. We fix $\xi_0 = (1,0,\ldots,0)^T \in \mathbb{R}^d$, and define 
 \[ \Phi: \mathbb{R}^\times \times \mathbb{R}^{d-1} \to \mathbb{R}^+ ~,~ h^T \xi_0 \mapsto \Delta_H(h) |\det(h)|^{-1}~.\]
 This mapping is well-defined, see the discussion preceding equation (\ref{eqn:meas_orbit_Haar}). Now \cite[Theorem 13]{Fu96} yields that $\widehat{\psi} \in {\rm L}^2(\mathbb{R}^d)$ is admissible iff 
 \[
   \int_{\mathbb{R}^d} |\widehat{\psi}(\xi)|^2 \Phi(\xi) d \xi < \infty~.
 \]
 Thus it remains to compute $\Phi$. We first note that the shearing subgroup $S < H$ is an abelian normal subgroup, hence 
 $\Delta_H|_S \equiv 1$. The same holds for the determinant function. Hence for all $h = \exp(rY) s \in H$ with $r \in \mathbb{R}$ and $s \in S$, 
  we find
  \[ \Delta_H (\exp(rY) s)|{\rm det}(h s)|^{-1} = \Delta_H(\exp(rY)) |{\rm det}(\exp(r Y))|^{-1} ~.\] Thus we only need to compute $\Delta_H(h)$ and $\det(h)$ for $h = \exp(r Y)$. We assume that $Y$ is normalized such that its first diagonal entry equals one (recall Remark \ref{rem:shearlets}). We compute $\Delta_H(h)$ using the adjoint action of $H$ on the Lie algebra $\mathfrak{h} = \mathbb{R}  Y \oplus \mathfrak{s}$, with $\mathfrak{s}$ denoting the Lie algebra of the shearing subgroup (recall \cite[Lemma 2.30]{Folland_AHA}).  The conjugation action of $\exp(rY)$ on $\mathbb{R} Y$ is trivial, hence 
\[
 {\rm det}({\rm Ad})(\exp(-rY)) = {\rm det} (C_r) 
\] 
with the linear map
\[
  C_r : \mathfrak{s} \to \mathfrak{s} ~,~ X \mapsto \exp(-rY) X \exp(rY)~. 
\]
If we write $X = (X_{i,j})_{1 \le i,j \le d}$, we have by Corollary \ref{cor:char_shearsubs} that the mapping
$X \mapsto (X_{1,2},\ldots,X_{1,d})^T \in \mathbb{R}^{d-1}$ is a linear bijection. Hence we may compute the effect of conjugation with respect to these coordinates. If the diagonal entries of $Y$ are denoted by $y_1,\ldots,y_d$ and
$X' = \exp(-rY) X \exp(rY)$, then the coordinates of $X'$ are 
\[
 \left(X'_{1,2},\ldots,X'_{1,d} \right)^T = \left( \exp(r(y_2-y_1)) X_{1,2},\ldots, \exp(r(y_d-y_1)) X_{1,d} \right)^T~.
\] But this means that 
\begin{equation} \label{eqn:sh_Delta_H}
 \Delta_H(\exp(rY)) = \prod_{i=2}^d \exp(r(y_i-y_1)) = \exp(r({\rm trace}(Y)-d))~. 
\end{equation}
Note that the second equation used that $y_1 = 1$, by the normalization of $Y$ fixed above. 

On the other hand, we clearly have ${\rm det}(\exp(rY)) = \exp(r \cdot {\rm trace}(Y))$. 
 
 It remains to transfer these functions to the open dual orbit. For this purpose let $\xi = h^T \xi_0$ be given,
 where $h = \pm \exp(rY) s \in H$, with $r \in \mathbb{R}$ and $s \in S$. Assuming the above normalization of $Y$, and noting that the transposed action of $s$ leaves the first entry invariant, we find that $\exp(r) = |\xi_1|$. Thus, putting everything together, we find
 \[
  \Phi(\xi) = \Delta_H(\exp(rY)) |{\rm det}(\exp(rY))|^{-1} = e^{-rd} = |\xi_1|^{-d}~. 
 \]
\end{proof}

The close relationship between shearlet and abelian dilation groups allows to adapt the proof of Theorem \ref{thm:abelian_temp_embed} to the shearlet case.
Thus Theorem \ref{thm:main_cited} covers these groups as well, a fact which considerably extends the known results concerning their coorbit spaces. E.g., the existence of compactly supported elements in $\mathcal{B}_{v_0}$ was not previously known for the Toeplitz shearlet group. 

\begin{theorem} \label{thm:shear_temp_embed}
 Let $H$ denote a generalized shearlet dilation group, with $Y$ the infinitesimal generator of the diagonal subgroup, normalized to have one as first diagonal entry. Let $n$ denote the nilpotency class of the Lie algebra of the shearing subgroup. 
 Then $H$ fulfills the estimates (\ref{eqn:dec_est_norm})-(\ref{eqn:dec_est_A2}), with exponents
 \[
  e_2 = n-1+2\| Y \| ~,~ e_3 = |{\rm trace}(Y)|~,~ e_4 = |d-{\rm trace}(Y)|~.  
 \]
In particular, for given polynomial weight $w_0$ on $H$,  $q \in [1,\infty)$ and $s\ge0$, the open dual orbit is both $(s,q,w_0)$-temperately embedded and strongly $(s,w_0)$-temperately embedded, for suitable choices of indices.  
\end{theorem}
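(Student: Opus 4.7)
I will mirror the argument of Theorem \ref{thm:abelian_temp_embed}: parametrize $H$ in coordinates adapted to the shearlet decomposition, write $A_H$ in closed form in these coordinates, and estimate the four quantities entering (\ref{eqn:dec_est_A1})--(\ref{eqn:dec_est_A2}) by negative powers of $A_H$. By property (iii) of Definition 5.1 together with the remark following Proposition \ref{prop:char_shearsubs}, every $h \in H$ has a unique decomposition $h = \pm \exp(rY)({\rm id} + X)$ with $r \in \mathbb{R}$ and $X \in \mathfrak{s}$. Set $a = \pm e^r$ and let $x \in \mathbb{R}^{d-1}$ be the vector $(X_{1,2}, \ldots, X_{1,d})$ obtained from the first row of $X$; Corollary \ref{cor:char_shearsubs} guarantees that the assignment $X \mapsto x$ is a linear isomorphism, hence $\|X\| \asymp |x|$.

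Now take $\xi_0 = (1, 0, \ldots, 0)^T$. Because $Y$ has first diagonal entry $1$, a direct computation gives $h^T \xi_0 = a(1, x)^T$, while $\mathcal{O}^c = \{0\} \times \mathbb{R}^{d-1}$ (independent of the specific shearlet group). The Euclidean distance from $h^T \xi_0$ to $\mathcal{O}^c$ equals $|a|$, attained at $\eta = (0, ax)^T$; formula (\ref{eqn:A_noneuclid}) therefore yields
\[
A_H(h) = \min\left( \frac{|a|}{1 + |a||x|},\ \frac{1}{1 + |a|\sqrt{1+|x|^2}}\right).
\]
Direct inspection of this expression, split according to whether $|a| \ge 1$ or $|a| < 1$, delivers the two basic inequalities
\begin{equation}\label{eqn:sh_basic}
\max(|a|, 1/|a|) \preceq A_H(h)^{-1} \quad \text{and} \quad 1 + |x| \preceq A_H(h)^{-1},
\end{equation}
which are the engine of every subsequent estimate.

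The remaining bounds are then essentially assembly. Since $Y$ is diagonal, $\|\exp(\pm r Y)\| = \max_i e^{\pm r y_i} \le \max(|a|, 1/|a|)^{\|Y\|}$; nilpotency $X^n = 0$ yields the truncated Neumann series $s^{-1} = \sum_{k=0}^{n-1} (-X)^k$, whence $\|s^{-1}\| \preceq (1+|x|)^{n-1}$, while $\|s\| \preceq 1+|x|$. Multiplying these contributions and invoking (\ref{eqn:sh_basic}) gives $\|h^{\pm 1}\| \preceq A_H(h)^{-(n-1+\|Y\|)}$, which, because $A_H \le 1$, a fortiori yields (\ref{eqn:dec_est_norm}) with the claimed $e_2 = n-1+2\|Y\|$. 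For the determinant, $|{\rm det}(h)| = |{\rm det}(\exp(rY))| = e^{r\, {\rm trace}(Y)} = |a|^{{\rm trace}(Y)}$ (using ${\rm det}(s)=1$ for unipotent $s$), so $|{\rm det}(h^{\pm 1})| \preceq A_H(h)^{-|{\rm trace}(Y)|}$, giving $e_3 = |{\rm trace}(Y)|$. For the modular function, formula (\ref{eqn:sh_Delta_H}) provides $\Delta_H(h) = |a|^{{\rm trace}(Y) - d}$, and the same reasoning yields $e_4 = |d - {\rm trace}(Y)|$. For polynomial $w_0$, estimate (\ref{eqn:dec_est_A1}) is automatic by the remark following Lemma \ref{lem:dec_est_redundant}.

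With these four estimates in hand, Proposition \ref{prop:crit_temp_embed} and Theorem \ref{thm:crit_str_temp_embed} deliver both conclusions of the theorem. The only nontrivial step is verifying (\ref{eqn:sh_basic}) by case analysis on the size of $|a|$; the rest is direct computation, identical in spirit to the abelian case once the coordinates $(a, x)$ are in place. The payoff of the approach is that the same set of coordinates simultaneously handles all shearlet dilation groups, with the specific group entering only through the two scalar invariants $\|Y\|$ and $n$.
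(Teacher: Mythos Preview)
Your argument is correct and follows essentially the same strategy as the paper: parametrize $H$ via its semidirect product structure, compute $A_H$ explicitly in these coordinates, extract the basic inequalities controlling $|a|^{\pm 1}$ and $1+|x|$ by a single power of $A_H^{-1}$, and assemble. The one noteworthy difference is your choice of ordering $h=\pm\exp(rY)({\rm id}+X)$ rather than the paper's $h=\pm({\rm id}+X)\exp(rY)$: because $\exp(rY)$ then hits $\xi_0$ first and collapses to $e^r\xi_0$, your $h^T\xi_0=a(1,x)^T$ is free of the individual eigenvalues $y_i$, which makes the basic inequalities (\ref{eqn:sh_basic}) cleaner and in fact gives the sharper exponent $e_2=n-1+\|Y\|$ before you weaken it to match the stated $n-1+2\|Y\|$.
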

\begin{proof}
We employ the observations made in the previous results to simplify the calculations: For the representative of the open orbit, we may take $\xi_0 = (1,0,\ldots,0)^T$. Let $y_1,\ldots,y_d$ denote the diagonal entries of $Y$, we normalize $Y$ such that $y_1 = 1$. For ease of notation, we introduce $y_0= 0$.
We write $h \in H$ as $h = \pm ({\rm id}_{\mathbb{R}} + X) \exp(rY)$, with $r \in \mathbb{R}$ and $X \in \mathfrak{s}$. Then we obtain $h^T \xi_0 = \pm \exp(rY) (\xi_0 + X^T \xi_0)$, and we note that the two vectors in the bracket have disjoint supports. In fact, the distance of $h^T \xi_0$ to $\mathcal{O}^c$ can now be determined as $\exp(r)$, and a point in $\mathcal{O}^c$ of smallest euclidean distance to $h^T \xi$ is given by  $\pm \exp(rY) X^T \xi_0$.

We can therefore write
\[
 A_H(h) = \min \left( \frac{\exp(r)}{1+| \exp(rY) ( X^T \xi_0)|}, \frac{1}{1+|h^T \xi_0|} \right)~,
\]
in particular
\begin{equation} \label{eqn:AH_expr}
 A_H(h) \le \exp(r)~. 
\end{equation}

We can further estimate
\[
 | h^T \xi_0 |  \succeq |\exp(r)| + |\exp(rY) X^T \xi_0| \succeq |\exp(r)| + \min \{ \exp(r y_i) : 2 \le i \le d \} \| X \|
 \]
 where the first inequality used that the vectors $\xi_0$ and $\exp(rY) X^T \xi_0$ have disjoint supports, and the second estimate used that since the map $\mathfrak{s} \ni X \mapsto X^T \xi_0$ is one-to-one, we have the norm equivalence 
\[
 \| X \| \preceq |X^T \xi_0| \preceq \| X \|~.
\]
This yields on one hand
\begin{equation} \label{eqn:AH_nX}
 A_H(h) \| X \| \preceq  \max \{ \exp(-r y_i) : 2 \le i \le d \}~,
\end{equation}
and on the other 
\begin{equation} \label{eqn:AH_expmr}
 A_H(h) \preceq \exp(-r). 
\end{equation}

Since ${\rm det}(h) = \exp(r {\rm trace}(Y))$ and $\Delta_H(h) = \exp(r ({\rm trace}(Y)-d))$ (see the proof of Theorem \ref{thm: shear_adm_cond} for the latter), equations 
(\ref{eqn:AH_expr}) and (\ref{eqn:AH_expmr}) immediately imply (\ref{eqn:dec_est_det}) and (\ref{eqn:dec_est_A2}),
with $e_3 =  |{\rm trace}(Y)|$ and $e_4 = |d-{\rm trace}(Y)|$. 

The norm of $h$ can be estimated by 
\[
 \| h \| \le \| {\rm id} + X \| \max \{ \exp(ry_i) : 1 \le i \le d \}  \le (1+\| X \|) \max \{ \exp(ry_i) : 1 \le i \le d \}~.
\]

Putting all together (using $y_0=0$), we obtain
\begin{eqnarray*}
\| h \| A_H(h)^{n-1+2\|Y \|} &\preceq & \max \{ \exp(-r y_i) : 0 \le i \le d \}   \max \{ \exp(ry_i) : 1 \le i \le d \} A_H(h)^{n-2+ 2\| Y \|}  \\ & \preceq & 1~,
\end{eqnarray*} where the first inequality used (\ref{eqn:AH_nX}), and the second one used (\ref{eqn:AH_expr}) and
(\ref{eqn:AH_expmr}); note that $\| Y \|$ is the maximal modulus of the $y_i$. Also, note that since $\mathfrak{s}$ has positive dimension, $n \ge 2$. 

The norm of $h^{-1} = \exp(-rY) ({\rm id}+X)^{-1}$ can be estimated by
\[
 \|h^{-1} \| \le \| (1+X)^{-1}\| \max \{ \exp(-ry_i) : 1 \le i \le d \} \preceq (1+\| X\|)^{n-1} \max \{ \exp(-ry_i) : 1 \le i \le d \}
\] where we again used a Neumann series expression for the inverse to estimate the norm of $\| (1+X)^{-1}\| $; this series breaks off after $n-1$ terms. 
Thus we obtain 
\begin{eqnarray*}
\| h^{-1} \| A_H(h)^{n-1+2\|Y \|} & \preceq & \max \{ \exp(-r y_i) : 1 \le i \le d \}   \max \{ \exp(ry_i) : 1 \le i \le d \} A_H(h)^{ 2\| Y \|} \\
& \preceq &  1~,
\end{eqnarray*}
by analogous reasoning. 
\end{proof}

\begin{remark}
 We expect that for concrete choices of shearlet dilation groups the constants $e_1$  through $e_4$, and the associated estimates for the indices $\ell$ giving the required numbers of vanishing moments, have room for improvement. Note however that the constants in the theorem are largely independent of the precise choice of shearing group. 
\end{remark}

The following result specializes the theorem to the coorbit spaces ${\rm Co}({\rm L}^p(G))$, emphasizing that our results yield concrete, explicitly computable criteria. 
\begin{corollary}
 Assume that  $H< {\rm GL}(\mathbb{R}^d)$ is a generalized shearlet dilation group, and let $Y$
 denote the infinitesimal generator of the diagonal subgroup, normalized to have first entry equal to one.
 Let $n$ denote the nilpotency class of the Lie algebra of the shearing subgroup. 
  Fix $1 \le p \le \infty$, and let $\psi \in {\rm L}^2(\mathbb{R}^d)$ denote a function with 
  vanishing moments in $\mathbb{R} \times \{  0 \} \subset \mathbb{R}^d$ of order $r$ and $|\widehat{\psi}|_{r,r} < \infty$,
  where 
  \begin{equation} \label{eqn:nvan_shearlet}
   r = d(1+2n) + \lfloor 4 \| Y \|(d+1) + \frac{3}{2} |{\rm trace}(Y)| + |d-{\rm trace(Y)}| \rfloor~. 
  \end{equation}
Then $\psi$ is an atom for the coorbit space ${\rm Co}({\rm L}^p(G))$.

Compactly supported atoms $\psi$ can be constructed by picking $f \in C_c(G)$ with continuous derivatives of order up to $2r$, and letting $\psi = \frac{{\rm d}^r}{{\rm d} x_1^r} f$. 
\end{corollary}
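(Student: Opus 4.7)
The plan is to assemble the corollary from Theorem~\ref{thm:shear_temp_embed}, Theorem~\ref{thm:crit_str_temp_embed}, and parts (c), (d) of Theorem~\ref{thm:main_cited}, with only minor bookkeeping required to fix the weight exponent $e_1$ for the control weight associated with ${\rm Co}({\rm L}^p(G))$.

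First I would invoke Remark~\ref{rem:dim_two} to fix $v_0(x,h) = w_0(h) = \max(1,\Delta_G(h))$ as a control weight for ${\rm L}^p(G)$, so that $s = 0$ throughout the argument. To verify (\ref{eqn:dec_est_A1}) with an explicit $e_1$, I would use the computation of $\Delta_G$ from the proof of Theorem~\ref{thm: shear_adm_cond}: for $h = \pm\exp(rY)s$ with $s \in S$ we have $\Delta_G(h) = e^{-rd}$, so $w_0(h)\, w_0(h^{-1}) \asymp e^{d|r|}$. Combining this with the bounds $A_H(h) \preceq e^r$ and $A_H(h) \preceq e^{-r}$ established as (\ref{eqn:AH_expr}) and (\ref{eqn:AH_expmr}) in the proof of Theorem~\ref{thm:shear_temp_embed}, we obtain $A_H(h) \preceq e^{-|r|}$, whence $w_0(h^{\pm 1}) A_H(h)^{d} \preceq 1$; thus (\ref{eqn:dec_est_A1}) holds with $e_1 = d$.

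With $e_1 = d$ in hand and the remaining exponents $e_2 = n-1+2\|Y\|$, $e_3 = |{\rm trace}(Y)|$, $e_4 = |d-{\rm trace}(Y)|$ supplied by Theorem~\ref{thm:shear_temp_embed}, I would feed all four into Theorem~\ref{thm:crit_str_temp_embed} with $s = 0$ to conclude strong $(0,w_0)$-temperate embeddedness with a concrete index $\ell$. Theorem~\ref{thm:main_cited}(c) then implies that every $\psi$ with $|\widehat{\psi}|_{\ell+d+1,\ell+d+1} < \infty$ and with vanishing moments in $\mathcal{O}^c = \{0\}\times\mathbb{R}^{d-1}$ of order at least $\ell+d+1$ belongs to $\mathcal{B}_{v_0}$, and is therefore an atom of ${\rm Co}({\rm L}^p(G))$. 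The number $r$ in (\ref{eqn:nvan_shearlet}) is then obtained by direct substitution: after pulling the integer contributions outside the floor, what remains under $\lfloor\cdot\rfloor$ are precisely the terms $4\|Y\|(d+1) + \frac{3}{2}|{\rm trace}(Y)| + |d - {\rm trace}(Y)|$. This floor-function bookkeeping is the only step that requires any care; the rest is a mechanical combination of the cited results.

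Finally, for the compactly supported construction, I would appeal to Remark~\ref{rem:shearlets}, which identifies ${\rm D}_{\mathcal{O}} = \partial/\partial x_1$ uniformly across all generalized shearlet dilation groups, since $\mathcal{O} = \mathbb{R}^\times \times \mathbb{R}^{d-1}$ is independent of the precise choice of shearing subgroup. Theorem~\ref{thm:main_cited}(d) then shows that $\psi := \partial_{x_1}^{\,r} f$ has vanishing moments in $\mathcal{O}^c$ of order $r$ whenever $f$ has integrable partial derivatives of order up to $r$. If $f$ is compactly supported in $\mathbb{R}^d$ and of class $C^{2r}$, the identity $\widehat{\psi}(\xi) = (2\pi i\xi_1)^r \widehat{f}(\xi)$ combined with the smoothness and compact support of $f$ yields $|\widehat{\psi}|_{r,r} < \infty$, while the compact support of $\psi$ is inherited from that of $f$. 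This completes the chain.
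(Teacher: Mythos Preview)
Your proposal is correct and follows essentially the same route as the paper: fix the control weight $w_0=\max(1,\Delta_G)$ with $s=0$, compute $e_1=d$ from $\Delta_G(h)=e^{-rd}$ together with the bounds (\ref{eqn:AH_expr}) and (\ref{eqn:AH_expmr}), feed $e_1,\dots,e_4$ into Theorem~\ref{thm:crit_str_temp_embed}, and then invoke Theorem~\ref{thm:main_cited}(c),(d). If anything, your write-up is slightly more careful than the paper's (you correctly identify $\mathcal{O}^c=\{0\}\times\mathbb{R}^{d-1}$ and explicitly justify $|\widehat{\psi}|_{r,r}<\infty$ for the compactly supported construction).
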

\begin{proof}
Recall that we can take $v_0(x,h) = w_0(h) = {\rm max}(1,\Delta_G(h))$, and $s=0$. 
 In order to use the formula from Theorem \ref{thm:crit_str_temp_embed}, we only
  need an estimate for $e_1$.
  For this purpose, we recall the notations and observations from the proof of Theorem \ref{thm:shear_temp_embed}, i.e., 
  $h = \pm ({\rm id}_{\mathbb{R}} + X) \exp(rY)$ for an element $h \in H$. We then have by (\ref{eqn:sh_Delta_H}), that 
 \[
  \Delta_G(0,h) = \exp(rd)~,
 \] and since $A_H(h) \preceq \max(\exp(r),\exp(-r))$, we obtain
 \[
  \Delta_G(0,h) A_H(h)^d \preceq 1~,
 \] i.e. $e_1 = d$. 
 Now Theorem \ref{thm:crit_str_temp_embed} yields that the dual orbit is strongly temperately embedded with index
 \begin{eqnarray*}
  \ell  & = &\lfloor e_1 + e_2(2d+2) + \frac{3}{2} e_3 +  e_4 \rfloor + d+1 \\
   & = & \lfloor d+  (n-1+2\| Y \|) (2d+2) + \frac{3}{2} |{\rm trace}(Y)| + |d-{\rm trace}(Y)|
   \rfloor + d + 1 
 \end{eqnarray*}
Thus Theorem \ref{thm:main_cited}(c) yields that any function $\psi$ with vanishing moments of order $r= d+1+ \ell$, and with $|\widehat{\psi}|_{r,r}< \infty$, is in $\mathcal{B}_{v_0}$. 
Note here that $d,n \in \mathbb{N}$ allows to simplify $r$ to the right-hand side of (\ref{eqn:nvan_shearlet}). 
\end{proof}

\begin{remark}
 For the standard higher-dimensional shearlets from Example \ref{rem:ex_shearlet}(a), one typically uses the complement with infinitesimal generator $Y = {\rm diag}(1,\frac{1}{2},\ldots,\frac{1}{2})$, which yields $\| Y \|  = 1$ as well as ${\rm trace}(Y) = (d+1)/2$. The nilpotency class is $n=2$, and we finally obtain $r = 10d+4+\left\lfloor \frac{d+1}{4} \right\rfloor$. 
For $d=2$, this gives $r=24$, somewhat worse than the value obtained by more explicit methods in Remark \ref{rem:dim_two}. 

For the Toeplitz shearlet case from Example \ref{rem:ex_shearlet}{b}, typically used with $Y = {\rm diag}(1,1,\ldots,1)$, we get $\| Y \|=1$, ${\rm trace}(Y) = d$ and
nilpotency classe $n=d$, which results in $r = 2d^2 + 6d + 4 + \left\lfloor \frac{d}{2} \right\rfloor$. Note the quadratic term, as a result of the nilpotency degree $n=d$.  
\end{remark}


\subsection{Shearing groups in dimensions three and four}

The correspondence between shearing groups and commutative associative algebras allows a systematic construction of shearing groups. For small dimensions, it is possible to give a complete list. 

We first note that the two classes of examples given in Remark \ref{rem:ex_shearlet} correspond to two extreme cases of irreducible algebras of dimension $d$: One quickly realizes that the algebras underlying the two examples in Remark \ref{rem:ex_shearlet} have nilpotency classes $2$ and $d$, respectively. Furthermore, it is easy to see that for each  $n \in \{ 2, d \}$, there exists precisely one $d$-dimensional algebra
$\mathcal{A}$ with $n(\mathcal{A}) = n$. For the case $n=2$, one has $ab = 0$ for any $a,b \in \mathcal{N}$, and it is clear that for two such algebras, any linear isomorphism is an algebra isomorphism as well. For the case $n=d$, there exists $a \in \mathcal{N}$ with $a^{d-1} \not= 0$, and this implies that $1_{\mathcal{A}}, a,a^2, \ldots, a^{d-1}$ is a basis of $\mathcal{A}$. But then the map $\mathbb{R}[X]/(X^d) \to \mathcal{A}$, $X \mapsto a$ is an algebra isomorphism. 

Thus Remark \ref{rem:ex_shearlet} provides all possible shearing groups in dimension three up to conjugacy.  

We next give a complete list (up to conjugacy) of all shearing groups in dimension 4. 
\begin{itemize}
 \item $n(\mathcal{A}) = 2,4$ provides the two groups from Remark \ref{rem:ex_shearlet}.
 \item For $n(\mathcal{A}) = 3$, it is proved in \cite{Fu_abelian} that every four-dimensional commutative algebra with unit and nilpotency class $3$ is isomorphic to $\mathbb{R} [X,Y]
 / (X^3,Y^2-aX^2,XY)$, for a unique $a \in \{ -1, 0, 1 \}$. Picking the basis $Z_1 = 1_{\mathcal{A}}, Z_2 = X, Z_3 = Y, Z_4 = X^2$ and following the construction programme described in Remark \ref{rem:alg_sheardil} of computing the regular representation via the linear map $\psi: Z_i \mapsto e_i$ results in the following groups:
 \[
  H_a = \left\{ \left( \begin{array}{cccc} s & t_1 & t_2 & t_3 \\ 0 & s & 0 & t_1 \\ 0 & 0 & s & a t_2 \\ 0 & 0 & 0 & s 
                       \end{array} \right) : s \in \mathbb{R} \setminus \{ 0 \}~,t_1,t_2,t_3 \in \mathbb{R} \right\}~, a \in \{-1,0,1 \}~.
 \] 
\end{itemize}

%
%

\section*{Concluding remarks}

The main goal of this paper was to provide easily accessible criteria for atoms, that are valid for large classes of dilation groups. This unified treatment is a fairly novel feature of the theory; prior to \cite{Fu_coorbit,Fu_atom}, most sources concerned with coorbit theory for higher-dimensional wavelet transforms concentrated on special cases \cite{DaKuStTe,DaStTe10,DaHaTe12,DaStTe12,Ul12,FePa}, each of which was treated with tailor-made approaches. It would be interesting to study further extensions of the method, e.g. to the setting of Besov spaces on symmetric cones \cite{Ch13}, or to wavelet systems arising from the mock metaplectic representation \cite{DMDV13}. For some systems of the latter type, which are closely related to shearlets, this has already been studied in \cite{dahlke2014different}. 

We finish by emphasizing the central role of the dual action in all our considerations. Note that the usefulness of this action is not just restricted to the characterization of analyzing vectors and atoms. Another recent development in wavelet coorbit theory that heavily relies on the dual action is the paper \cite{FuVo}, which embeds the theory of coorbit spaces into the context of  decomposition spaces. The latter class of spaces was introduced by Feichtinger and Gr\"obner \cite{DecompositionSpaces1}, and has recently attracted renewed attention, for example in the context of shearlet smoothness spaces \cite{BorupNielsenDecomposition,Labate_et_al_Shearlet}. The language of decomposition spaces provides a unified framework for the treatment of many different types of smoothness spaces, including $\alpha$-modulation spaces (and thus the classes of modulation and inhomogeneous Besov spaces), but also shearlet smoothness spaces. By the results in \cite{FuVo} {\em all} wavelet coorbit spaces of the type 
discussed in this paper, at least for weights that are independent of the translation parameter (corresponding to the case $s=0$), also fall into the category of decomposition spaces. This result provides a method to systematically study embeddings between coorbit spaces associated to {\em different} dilation groups. 

\section*{Acknowledgements}

This research was in part funded by the Excellence
Initiative of the German federal and state governments, and by the German Research Foundation (DFG),
under the contract FU 402/5-1. We thank Felix Voigtlaender for many helpful comments, and the referees for useful suggestions and additional references.  The second author was supported by a grant from Ferdowsi Universty of Mashhad, No. MP93308RRT.

\bibliography{coorbit_abelian_shearlet.bib}
\bibliographystyle{plain}
\end{document}